\documentclass[11pt,a4paper]{article}
\usepackage{mathrsfs, amsmath, amsthm, amssymb, bm, mathtools, thm-restate}
\usepackage{graphicx, xcolor, tikz}% packages for figures
\usetikzlibrary{calc, shapes}
\usepackage{caption, subcaption}
\captionsetup{subrefformat=parens}
\usepackage{array}
\arraycolsep=1pt
\usepackage{cases}
\usetikzlibrary{calc}
\usetikzlibrary{decorations.markings}
\usetikzlibrary{arrows}
\usetikzlibrary{patterns}
\usetikzlibrary{shapes,snakes}
\pgfdeclarelayer{foreground}
\pgfsetlayers{main,foreground}
\usepackage{pgfmath}
\usepackage{caption}
\usepackage{thm-restate}

%\begin{restatable}[Goldbach's conjecture]{thm}{goldbach}
%\label{thm:goldbach}
%Every even integer greater than 2 can be expressed as the sum of two primes.
%\end{restatable}

%\section{Second}

%We recall \cref{thm:goldbach}:

\makeatletter
%%%-------------Theorem Environment-------
\makeatletter
\def\th@plain{%
	\upshape %\itshape % body font
}
\makeatother

\makeatletter
\renewenvironment{proof}[1][\proofname]{\par
	\pushQED{\qed}%
	\normalfont \topsep6\p@\@plus6\p@\relax
	\trivlist
	\item[\hskip\labelsep
	\bfseries
	#1\@addpunct{.}]\ignorespaces
}{%
\popQED\endtrivlist\@endpefalse
}
\makeatother

\newtheorem{theorem}{Theorem}
\newtheorem{lem}{Lemma}
\newtheorem{cor}{Corollary}
\theoremstyle{definition}
\newtheorem{definition}{Definition}
\newtheorem{proposition}{Proposition}

\newtheorem{example}{Example}

\newtheorem{clm}{Claim}

\usepackage[left=25mm, top=25mm, bottom=25mm, right=25mm]{geometry}
\setlength{\parskip}{0pt}
\usepackage[T1]{fontenc}
\usepackage[inline]{enumitem}
\usepackage[square, numbers, sort&compress]{natbib}
\usepackage[pdftex,%
bookmarks=true,%
bookmarksnumbered=true, %% 把章节的标号放入书签中，缺省为false
bookmarksopen=true, %% 打开书签树
plainpages=false,%
pdfpagelabels,%
colorlinks=true, %% 彩色文字链接打开，去掉方框
linkcolor=blue, %% 目录链接的颜色为蓝色
citecolor=blue,%
anchorcolor=green,
urlcolor=black,
breaklinks=true,
hyperindex=true]{hyperref}%% 不能和cite宏包同时使用

\def\int{\mathrm{int}}

\def \int {\rm int}

\def \red {\textcolor{red} }
\def \bl {\textcolor{blue} }

\definecolor{PowderBlue}{RGB}{134,205,235}
\definecolor{DodgerBlue}{RGB}{30,144,255}
\definecolor{SkyBlue}{RGB}{135,206,235}
\definecolor{LightSkyBlue}{RGB}{135,206,250}
\definecolor{DeepSkyBlue}{RGB}{0,191,255}
\definecolor{MediumBlue}{RGB}{0,0,205}
\definecolor{RoyalBlue}{RGB}{65,105,225}
\definecolor{MidnightBlue}{RGB}{25,25,112}
\definecolor{NavyBlue}{RGB}{0,0,128}
\definecolor{LightBlue}{RGB}{173,216,230}

%%%%%%%%
\usepackage[colorinlistoftodos,prependcaption,textsize=scriptsize, textwidth=20mm,]{todonotes}

\usepackage[normalem]{ulem}
\begin{document}
\title{Indicated list colouring game on graphs}
\author{Yangyan Gu \thanks{School of Mathematical Sciences, Zhejiang Normal University, Email: yangyan@zjnu.edu.cn, }
\and 
Yiting Jiang \thanks{Institute of Mathematics, School of Mathematical Sciences, Nanjing Normal University, Email: ytjiang@njnu.edu.cn, Grant number: NSFC 12301442, BK20230373, 23KJB110018.}
\and 
Huan Zhou\thanks{ Universit\'{e} Paris Cit\'{e}, CNRS, IRIF, F-75006, Paris, France; School of Mathematical Sciences, Zhejiang Normal University, Email: huanzhou@zjnu.edu.cn, Grant unmber:ANR‐18‐IdEx‐0001. }
\and Jialu Zhu\thanks{School of Mathematical Sciences, Zhejiang Normal University, Email: jialuzhu@zjnu.edu.cn, }         \and
			Xuding Zhu\thanks{School of Mathematical Sciences, Zhejiang Normal University, Email: xdzhu@zjnu.edu.cn, Grant numbers:   NSFC 12371359. }}
\date{\today}

\maketitle

\begin{abstract}
    Given a graph $G$ and a list assignment $L$ for $G$, the indicated $L$-colouring game on $G$  is played by two players: Ann and Ben. In each round, Ann chooses an uncoloured vertex $v$, and Ben colours $v$ with a colour from $L(v)$ that is not used by its coloured neighbours. If all vertices are coloured, then Ann wins the game. Otherwise after a finite number of rounds, there remains an uncoloured vertex $v$ such that all colours in $L(v)$ have been used by its coloured neighbours,  Ben wins. We say $G$ is indicated $L$-colourable if Ann has a winning strategy for the indicated $L$-colouring game on $G$. For a mapping $g: V(G) \to \mathbb{N}$, 
    we say $G$ is indicated $g$-choosable if  $G$ is indicated $L$-colourable for every list assignment $L$ with $|L(v)| \ge g(v)$ for each vertex $v$, and  $G$ is indicated degree-choosable if $G$ is indicated $g$-choosable for  $g(v) =d_G(v)$  (the degree of $v$).   
    This paper proves that 
    a graph $G$ is not indicated degree-choosable if and only if $G$ is an expanded Gallai-tree - a graph whose maximal connected induced subgraphs with no clique-cut are complete graphs or blow-ups of odd cycles, along with a technical condition (see Definition \ref{def-egt}).   
    This leads to a linear-time algorithm that determines if a graph is indicated degree-choosable.     
    A connected graph $G$ is called an IC-Brooks graph if its indicated chromatic number equals $\Delta(G)+1$.  Every IC-Brooks graph is a regular expanded Gallai-tree. We show that if $r \le 3$, then every $r$-regular expanded Gallai-tree is an IC-Brooks graph. For $r \ge 4$, there are $r$-regular expanded Gallai-trees  that are not IC-Brooks graphs. We give a characterization of   IC-Brooks graphs, and present a linear-time algorithm that determines if a given graph of bounded maximum degree is an IC-Brooks graph.  
\end{abstract}

\section{Introduction}

Assume $G$ is a graph and $L$ is a list assignment that assigns to each vertex $v$ of $G$ a set $L(v)$ of permissible colours.
An {\em	$L$-colouring} of $G$ is a proper   colouring $\phi$  of $G$ with $\phi(v) \in L(v)$ for each vertex $v$. We say that $G$ is {\em $L$-colourable} if there exists an $L$-colouring of $G$, and $G$ is {\em $k$-choosable} if $G$ is $L$-colourable for any list assignment $L$ of $G$ with $|L(v)| \ge k$ for each vertex $v$. More generally, for a function $g: V(G) \to \mathbb{N}$, we say $G$ is   {\em $g$-choosable} if $G$ is $L$-colourable for every list assignment  $L$ with $|L(v)| \ge g(v)$ for all $v \in V(G)$. The {\em choice number} $ch(G)$ of $G$ is the minimum $k$ for which $G$ is $k$-choosable.
		
List colouring of graphs is a natural generalization of classical graph colouring, introduced independently by Erd\H{o}s-Rubin-Taylor \cite{ERT} and Vizing \cite{Vizing76} in 1970's, and has been studied extensively in the literature. 

In this paper, we study a game version of list colouring problem. Given a list assignment $L$ of $G$, 
the {\em indicated $L$-colouring game} on $G$ is played by two players: Ann and Ben. Initially, each vertex is uncoloured.  In each
round, Ann chooses an uncoloured vertex $v$ and Ben assigns to $v$ a colour $c \in L(v)$   that has not been assigned to any of its coloured neighbors. Ann wins the game if all vertices of $G$ are coloured.  Otherwise, after some round, there remains an uncoloured vertex $v$ such that all colours in $L(v)$ have been assigned to neighbours of $v$, and rendering $v$ uncolourable, and  Ben wins.   We say $G$ is {\em indicated $L$-colourable} if Ann has a winning strategy for the indicated $L$-colouring game.

If $L(v)=\{1,2,\ldots, k\}$ for all vertices $v$, then the indicated $L$-colouring game is called an {\em indicated $k$-colouring game } and $G$ is {\em indicated $k$-colourable} if Ann has a winning strategy for the indicated $k$-colouring game.  The {\em indicated chromatic number} of $G$ is defined as 
$$\chi_i(G) = \min\{k: G \text{ is indicated $k$-colourable}\}.$$

The concept of indicated $k$-colouring game and the parameter $\chi_i(G)$  was introduced by Grytczuk and first studied by Grzesik \cite{Grzesik}. 
The list indicated colouring game was introduced in \cite{KKKO}. 

\begin{definition}
    \label{indicated-choicenumber}
   Assume $G$ is a graph and  $g: V(G) \to \mathbb{N}$ is a mapping.   We say $G$ is {\em indicated $g$-choosable } if $G$ is indicated $L$-colourable for any list assignment $L$ of $G$ for which $|L(v)|\ge g(v)$ for each vertex $v$. If $g(v)=k$ for every vertex $v$, then indicated $g$-choosable is also called {\em indicated $k$-choosable}.
The {\em indicated choice number} of $G$, denoted by $ch_i(G)$, is the least integer $k$ such that $G$ is indicated $k$-choosable.
\end{definition}

\begin{definition}
    \label{def-degree}
    A {\em degree-list assignment} of $G$ is a list assignment $L$ with $|L(v)| \ge d_G(v)$ for each $v$. If $|L(v)| = d_G(v)$ for all $v$,   $L$  is a {\em tight} degree-list assignment. 
A graph $G$   is {\em indicated degree-choosable} if it is indicated $g$-choosable for  $g(v)  =d_G(v)$. 
\end{definition}

Game version of many graph theory problems have been studied in the literature. By introducing an adversary in the process of getting a solution to a graph theory problem, it may help to reveal critical structural properties that are obstacles or essential to the existence of a solution. 

A few graph colouring games are studied  in the literature. 

One graph colouring game was invented first by Bram \cite{Gardener}, and re-invented by Bodlaender \cite{Bod}. Given a graph $G$ and a set $C$ of colours. Two players, Alice and Bob, take turns to colour uncoloured vertices of $G$ properly with colours from $C$. Alice's goal is to produce a proper colouring of the whole graph, and Bob's goal is the opposite. The {\em game chromatic number} of $G$ is the minimum number of colours needed for Alice to have a winning strategy in the game. A benchmark problem is to determine the maximum game chromatic number of planar graphs. The current known upper and lower bounds are $17$ and $7$, respectively \cite{KT, Zhu}.

Another graph colouring game is an online version of the list colouring of graphs, introduced by Schauz \cite{Schauz}. Given a graph with each vertex $v$ be given a set of $g(v)$ tokens, the {\em $g$-painting game} is played by two players: Lister and Painter. In each round, Lister chooses a subset $M$ of uncoloured vertices and removes one token from each   vertex in $M$, and Painter colours an independent set $I$ of $G$ that is a subset of $M$.  Painter wins if all vertices are coloured and Lister wins if some uncoloured vertex has no token left. We say $G$ is $g$-paintable if Painter has a winning strategy for the $g$-painting game.  One challenging open problem is to determine the minimum number of vertices in a $k$-choosable but not $k$-paintable graph \cite{KKLZ,KPZ, Zhu2009}.

The Ramsey online game, introduced independently by Beck \cite{Beck}  and   Kurek and Ruci\'{n}ski  \cite{KR}, is not directly related to vertex colouring of graphs. However, its definition is similar to that of indicated colouring of graphs, and this game, together with the game chromatic number of a graph, inspired Grytczuk to invent the indicated $k$-colouring game.  The Ramsey online game has two players: Builder and Painter: Given a set of vertices. In each round, Builder presents an uncoloured edge, and Painter colours the edge by red or blue. Builder's goal is to force a monochromatic graph with certain property in a fewest number of steps (possibly with the restriction that the constructed graph belongs to a restricted family of graphs), and Painter's goal is the opposite. The game was studied a lot in the literature \cite{Conlon2009,CFS,FKRRT,GHK,KK,MSS}, and  results and ideas in the study of the Ramsey online game were used to improve the upper bound for some classical Ramsey numbers \cite{CFS}.  

Compared to the graph colouring games above, indicated list colouring game considers more basic steps in the original procedure of list colouring. The action of colouring one vertex is split into two smaller steps: select one vertex, then colour it. Instead of optimizing the choice of colours, one  optimizes at selecting vertices only. An adversary chooses the   colour for the chosen vertex.  The question in concern is how much this restriction affects the colourability. 

A trivial upper bound $\chi(G) \le \Delta(G)+1$ for the chromatic number still holds for the indicated chromatic number: $\chi_i(G) \le \Delta(G)+1$.
A classical result in chromatic graph theory is Brooks' Theorem, which states that this upper bound is tight for a connected graph $G$ if and only if $G$ is a complete graph or an odd cycle.  
This result was extended by Borodin \cite{Borodin} and 
 Erd\H{o}s,  Rubin and Taylor \cite{ERT}, who 
 proved that a connected graph $G$ is not degree-choosable if and only if $G$ is a Gallai-tree, i.e.,  each block of $G$ is either a complete graph or an odd cycle.

In this paper, we are interested in the corresponding results in the setting of indicated colouring game and indicated list colouring game. Which graphs are indicated degree-choosable? What is the Brooks theorem with respect to the indicated chromatic number?

We call a graph $G$   an  {\em IC-Brooks graph}  if $\chi_i(G)= \Delta(G)+1$. We shall characterize  indicated degree-choosable graphs and IC-Brooks graphs. The characterization of not degree-choosable graphs introduced the graph class of Gallai-trees, which arises in some other context as well. For the characterization of  not indicated degree-choosable graphs, we define a graph class of expanded Gallai-trees. It is a more complex class of graphs, however,  graphs in this class are still simple enough to be recognized in linear-time. The characterization of IC-Brooks graphs is a little complicated, however, it leads to a linear-time algorithm that determines if a given graph of bounded maximum degree is an IC-Brooks graph.

\begin{definition}
    \label{def-badlist}
    Assume $G$ is a connected graph,  $L$ is a degree-list assignment of $G$.  If $G$ is not indicated $L$-colourable, then we say $(G,L)$ is   {\em infeasible}. Otherwise, we say $(G,L)$ is {\em feasible}.
\end{definition}

Note that if $L$ is a list assignment of $G$, $v$ is a vertex with $|L(v)| > d_G(v)$, then $G$ is indicated $L$-colourable if and only if $G-v$ is indicated $L$-colourable. So if $(G,L)$ is infeasible, then $L$ is a tight degree-list assignment of $G$. 

We prove that all infeasible pairs $(G,L)$ can be constructed from $(K_1, L_{\emptyset})$ by three simple operations, where $L_{\emptyset}(v)=\emptyset$ for the unique vertex $v$ of $K_1$. 

Using this result, we give a characterization of not indicated degree-choosable graphs. To state this characterization, we need some definitions.

For a graph $G$ and a vertex $v$ of $G$,  $N_G(v)$ denotes the set of neighbours of $v$ in $G$ and $N_G[v]=N_G(v) \cup \{v\}$ is the close neighbourhood of $v$. For an induced subgraph $H$ of $G$ and $v \in V(H)$, $N_H(v) =N_G(v) \cap V(H)$ and $N_H[v]= N_G[v] \cap V(H)$. For a subset $X$ of $V(G)$, $N_G(X) = \cup_{v \in X}N_G(v)$ and $N_G[X] = \cup_{v \in X}N_G[v]$.   For a positive integer $n$, $[n]$ denotes the set $\{1,2,\ldots, n\}$.

\begin{definition}
    \label{def-expansion}
    Assume $G$ is a graph and $p: V(G) \to \{1,2,\ldots\}$ is a mapping that assigns to each vertex  of $G$ a positive integer. We denote by $G[p]$ the graph obtained from $G$ by blowing up each vertex $v$ into a clique $K(v)$ of order $p(v)$. To be precise, $V(G[p]) = \cup_{v \in V(G)} \{v\} \times [p(v)]$ and for $i \in [p(u)], j \in [p(v)]$, $(u,i)(v,j) \in E(G[p])$ if and only if either $u=v$ and $i \ne j$ or $uv \in E(G)$. The graph $G[p]$ is called a  {\em blow-up} of $G$.
\end{definition}

\begin{definition}
    \label{def-cliquecut}  
A {\em clique cut} in a graph $G$ is a clique $K$ such that $G-V(K)$ is disconnected.  A clique-cut $K$ is {\em elementary} if for any $v \in V(K)$, there are two vertices $x,y \in V(G)-V(K)$ that are disconnected in $G-V(K)$ and connected in $G-(V(K)-\{v\})$.  An {\em expanded block} of $G$  is a maximal connected induced subgraph $H$ of $G$ such that $H$ has no clique-cut. In particular, if $G$ has no clique-cut, then $G$ itself is an expanded block.
\end{definition}

\begin{definition}
    \label{def-rootclique}
    Assume $G$ is a graph and $H=C[p]$ is an expanded block of $G$ which is a blow up of an odd cycle $C$ of length at least $5$, where each vertex $v$ of $C$ is blow up into a clique $K(v)$ of size $p(v)$. If $uv \in E(C)$ and $K=K(u) \cup K(v)$ is an elementary clique-cut of $G$, then $K$ is called a {\em root clique} of $H$. 
\end{definition}

%%%%%%%%%%%%%

\begin{definition}
    \label{def-egt}
     A connected graph $G$ is an expanded Gallai-tree if each expanded block of $G$ is either a complete graph or a blow-up of an odd cycle, and each expanded block that is a blow-up of an odd cycle of length at least 5 has at most one root-clique.   
\end{definition}

For an expanded Gallai-tree $G$, we denote by $\mathcal{B}_G$ the set of expanded blocks that have a root clique.
For $H \in \mathcal{B}_G$, let $\pi_G(H)$ be the root-clique of $H$ in $G$. 
For each clique-cut $K$ of $G$, let 
$$\mathcal{B}_{G,K} = \{H \in \mathcal{B}_G: \pi_G(H)=K \}.$$

It follows from the definition  that a blow-up of a Gallai-tree is an expanded Gallai-tree. However, not every expanded Gallai-tree is a blow-up of a Gallai-tree.   See Fig. \ref{fig-5} for an example.

The following result characterizes graphs that are not indicated degree-choosable.

\begin{theorem}
    \label{thm-indicateddegreechoosable}
    A connected graph $G$ is not indicated degree-choosable if and only if $G$ is an expanded Gallai-tree.
\end{theorem}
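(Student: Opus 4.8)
The plan is to prove both directions, with the harder work in the ``only if'' direction being essentially the content of the structural decomposition of infeasible pairs announced just before the theorem, so I would organize the proof around that decomposition.

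\textbf{The ``if'' direction.} Assume $G$ is an expanded Gallai-tree; I must produce a tight degree-list assignment $L$ for which Ben wins the indicated $L$-colouring game. I would build $L$ by induction on the block-like structure of $G$, handling the two types of expanded blocks separately. For a complete-graph expanded block $K_n$ with no clique-cut (i.e.\ $G=K_n$), take $L$ to be any tight assignment realizing $K_n$ as a ``bad'' list configuration; here Ben wins because the lists behave like the classical non-degree-choosable complete graph, and Ann gains nothing from choosing the order since every colouring of a $K_n$-list family can be steered into a dead end by Ben. For a blow-up $C[p]$ of an odd cycle $C$ of length $\ge 5$, I would use the fact that the underlying odd cycle already admits a bad tight list assignment (the Erd\H{o}s--Rubin--Taylor construction) and lift it to $C[p]$; the key point is that inside each clique $K(v)$ Ben can force the colours to be a prescribed $p(v)$-subset, reducing the game on $C[p]$ to the game on $C$. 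Then I would glue expanded blocks along clique-cuts: the ``at most one root-clique'' condition is exactly what lets Ann not break the adversary's plan --- when two blocks share an elementary clique-cut $K$, Ben colours $K$ first (forcing the bad colours there), and the root-clique restriction guarantees the two sides do not impose conflicting demands on $K$. Formally this is an induction in which the inductive hypothesis carries not just ``$(G,L)$ infeasible'' but ``infeasible with a specified set of forced colours on the designated clique-cut,'' matching the three construction operations alluded to in the paper.

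\textbf{The ``only if'' direction.} Assume $G$ is connected and not indicated degree-choosable, so there is a tight degree-list assignment $L$ with $(G,L)$ infeasible; I must show $G$ is an expanded Gallai-tree. By the announced result, $(G,L)$ is obtained from $(K_1,L_\emptyset)$ by the three construction operations. I would analyze what each operation does to the graph: show that one operation corresponds to attaching a complete-graph block, another to attaching an odd-cycle-blow-up block along a root-clique, and the third to the clique-cut gluing; each operation preserves ``every expanded block is a complete graph or an odd-cycle blow-up'' and preserves ``each odd-cycle-blow-up block has at most one root-clique.'' Since $(K_1,L_\emptyset)$ trivially has these properties, induction on the number of operations finishes it. I would also need the converse bookkeeping --- that every expanded block of the resulting $G$ actually arises from one of the operations and that no operation can create a second root-clique in an existing block --- which is where the ``elementary clique-cut'' hypothesis in Definition~\ref{def-rootclique} does its work: elementariness forces the shared clique to genuinely separate the graph at every vertex, so it cannot be absorbed into a larger block upon later gluings.

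\textbf{Main obstacle.} The crux is the structural decomposition theorem for infeasible pairs (that all infeasible $(G,L)$ arise from $(K_1,L_\emptyset)$ via the three operations), which the excerpt states but does not prove in what we see; the theorem above is essentially a translation of that decomposition into graph-class language, so the real difficulty --- a careful analysis of Ben's winning strategies and how local list-exhaustion propagates --- lives in that lemma rather than in this theorem. Granting it, the remaining work is a somewhat delicate but routine induction; the most error-prone part will be verifying that the operations exactly account for \emph{root-cliques} and the ``at most one'' condition, in particular checking that an odd-cycle-blow-up expanded block attached at a root-clique cannot later acquire a second elementary clique-cut that also satisfies Definition~\ref{def-rootclique}. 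I would prove that as a self-contained structural lemma about expanded blocks before assembling the induction.
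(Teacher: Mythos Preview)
Your overall framework matches the paper's: both directions are derived from the operational decomposition (Theorem~\ref{thm-nic}/Corollary~\ref{cor-nidc}) by induction, and you correctly identify that decomposition as where the real work lives. But two points in your ``if'' direction are genuinely wrong, not just vague.

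First, you write ``Ben colours $K$ first (forcing the bad colours there).'' Ben never chooses which vertex to colour; Ann does. So an argument based on Ben steering the game to the clique-cut is not available, and your gluing step as written fails. Relatedly, your reading of the ``at most one root-clique'' condition as a constraint on Ben's strategy is off---it is a purely structural condition on $G$, and its role is to ensure the inverse operations still yield expanded Gallai-trees.

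Second, your identification of the three operations with ``attach a complete-graph block / attach an odd-cycle-blow-up along a root-clique / glue along a clique-cut'' is not accurate. The operations are vertex duplication $D_v$ (creating a twin), tripling a degree-2 vertex $T_v$, and vertex sum $\oplus$ (creating a cut-vertex). The paper's proof does not build $L$ block by block; instead it shows that any expanded Gallai-tree on $\ge 2$ vertices must contain adjacent twins, a triple of 2-vertices, or a cut-vertex, applies the corresponding inverse operation, checks that the result is still an expanded Gallai-tree (this is where one tracks how expanded blocks, clique-cuts, and root-cliques behave under $D_v^{-1}$, $T_v^{-1}$, $\oplus^{-1}$), and invokes Lemmas~\ref{lem-main1}--\ref{lem-vs}. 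Your block-by-block construction of $L$ could in principle be made to work, but it would require substantially more care than you sketch, and the paper's reverse-operation argument is cleaner. The ``only if'' direction in your plan is essentially the paper's, modulo the same mis-description of the operations.
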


By Theorem \ref{thm-indicateddegreechoosable}, an IC-Brooks graph is a regular expanded Gallai-tree. Theorem \ref{thm-charactIC} in Section 8 gives   a characterization of IC-Brooks graphs (some technical definitions are needed in the characterization). Using this characterization, we show that for $r \le 3$, every $r$-regular expanded Gallai-tree is indeed an IC-Brooks graph. For $r \ge 4$, we give examples of  $r$-regular expanded Gallai-trees that are not IC-Brooks graphs. The characterization (Theorem \ref{thm-charactIC}) may seem not very simple. However, it   leads to   a linear-time algorithm that   determines if an $r$-regular expanded Gallai-tree is an IC-Brooks graph. We show that although not every regular expanded Gallai-tree is an IC-Brooks graph,   every expanded Gallai-tree is an induced subgraph of an  IC-Brooks graph. This shows that the structure of IC-Brooks graphs are complicated and we do not expect a very clean and simple characterization. 

Our results have some other consequences.
It was conjectured in
 \cite{KKKO} that all IC-Brooks graphs with $\Delta(G)=r$ contains $K_{r+1}^-$ (complete graph $K_{r+1}$ minus one edge) as a subgraph, and proved that the conjecture holds for graphs $G$ with $\Delta(G) \le 3$ \cite{KKKO}. It follows from our results that the conjecture fails when $\Delta(G) \ge 4$.
 
It was proved in \cite{KKKO} that  every IC-Brooks graph has  $\omega(G) \ge (\Delta(G)+1)/2$. Our results imply an improvement of this  bound:   $\omega(G) \ge \frac 23 (\Delta(G)+1)$. This bound is tight. There are  IC-Brooks graphs with   $\omega(G) = \frac 23 (\Delta(G)+1)$ . 

The relation between the chromatic number and indicated chromatic number of graphs was studied in \cite{Grzesik}. It was proved in \cite{Grzesik} that for any positive integer $n$, there is a graph $G$ with
$\chi(G) \ge n$ and $\chi_i(G) = \frac 43 \chi(G)$. Our results imply that there are  graphs $G$ with arbitrary large chromatic number for which $\chi_i(G) > ( \frac 32 - \epsilon) \chi(G)$ for any $\epsilon > 0$. 
  It remains an open question whether $\chi_i(G) \le C \chi(G)$ for some constant $C$.

\section{Equivalent definition and some basic properties}

This section proves some basic lemmas concerning indicated $L$-colourability of graphs. Lemma \ref{lem-remaining}, whose proof is trivial and omitted, can be viewed as an alternate definition of indicated $L$-colourability, and will be the working definition in the remainder of the paper.

\begin{definition}
    \label{def-partial}
    Assume $G$ is a graph, $L$ is a list assignment of $G$, $X$ is a subset of $V(G)$, and $\phi$ is an $L$-colouring of $G[X]$. Then $L^{\phi}$ is the list assignment of $G-X$ defined as
    \[
    L^{\phi}(v) = L(v) - \phi(N_G(v) \cap X),
    \]
    for each vertex $v \in V(G)-X$. Here $\phi(A)= \{\phi(x): x \in A\}$. If $X=\{v\}$ is a single vertex, $c \in L(v)$ and  $\phi(v)=c$, then we denote $L^{\phi}$ by $L^{v \to c}$. 
\end{definition}

\begin{lem}
    \label{lem-remaining}
    Assume $G$ is a graph and $L$ is a list assignment of $G$. If $G$ has a single vertex $v$, then $G$ is  indicated $L$-colourable if and only if   $L(v) \ne \emptyset$. Assume $|V(G)| \ge 2$.
    Then the following are equivalent:
    \begin{enumerate}
        \item $G$ is not indicated $L$-colourable.
        \item For each vertex $   v \in V(G)$, there is a colour $ c \in L(v)$ such that   $G-v$ is not indicated $L^{v\to c}$-colourable.  
        \item For any subset $X$ of $G$, there is an $L$-colouring $\phi$ of $G[X]$ such that $G-X$ is not indicated $L^{\phi}$-colourable.
    \end{enumerate}  
\end{lem}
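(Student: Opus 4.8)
The plan is to prove the chain of implications $(1) \Rightarrow (3) \Rightarrow (2) \Rightarrow (1)$, together with the single-vertex base case, using the original game definition as the only ground truth. The single-vertex case is immediate: if $G=(\{v\},\emptyset)$, then Ann's only move is to pick $v$, and Ben can colour it precisely when $L(v)\neq\emptyset$, so Ann wins iff $L(v)\neq\emptyset$. For $|V(G)|\ge 2$, the key conceptual point is that from Ann's perspective the game decomposes after the first round: once Ben has coloured Ann's chosen vertex $v$ with some $c\in L(v)$, the remainder of the game is exactly the indicated $L^{v\to c}$-colouring game on $G-v$, because deleting $c$ from the lists of neighbours of $v$ records precisely the constraint that a coloured neighbour imposes. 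This ``restriction lemma'' — that the game on $(G,L)$ after Ann plays $v$ and Ben answers $c$ is the game on $(G-v,L^{v\to c})$ — is the workhorse and should be stated and justified first (it is essentially the definition of $L^{v\to c}$ unwound).

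For $(2)\Rightarrow(1)$: suppose that for every $v\in V(G)$ there is a colour $c_v\in L(v)$ with $G-v$ not indicated $L^{v\to c_v}$-colourable. We describe a winning strategy for Ben on $(G,L)$. When Ann picks her first vertex $v$, Ben answers $c_v$; by hypothesis and the restriction lemma, the residual game on $(G-v, L^{v\to c_v})$ is a Ben win, so Ben follows a winning strategy there. (One should note $G-v$ may be disconnected, but indicated $L$-colourability is defined for all graphs, and the restriction lemma and the statement of Lemma \ref{lem-remaining} as stated apply without connectivity; alternatively Ben wins in the component where he has a winning strategy.) Hence $(G,L)$ is not indicated $L$-colourable, i.e.\ $(1)$ holds.

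For $(1)\Rightarrow(3)$: assume $(G,L)$ is a Ben win, and let $X\subseteq V(G)$ be arbitrary. I would have Ann (hypothetically) offer the vertices of $X$ one at a time at the start of the game; Ben, playing his winning strategy, must answer, and after these $|X|$ rounds he has produced some proper $L$-colouring $\phi$ of $G[X]$ while still holding a winning strategy for the residual game — which, by repeated use of the restriction lemma, is the game on $(G-X, L^{\phi})$. So $G-X$ is not indicated $L^{\phi}$-colourable, giving $(3)$. Note Ben's $\phi$ is automatically a proper colouring of $G[X]$ with $\phi(v)\in L(v)$ since legality within $X$ is enforced each round. Finally $(3)\Rightarrow(2)$ is the special case $X=\{v\}$: the colouring $\phi$ of $G[\{v\}]$ is just a choice of $c\in L(v)$, and $L^{\phi}=L^{v\to c}$.

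The main obstacle is not difficulty but bookkeeping: making the ``restriction lemma'' airtight, i.e.\ verifying that a strategy for Ben (or Ann) in the game on $(G-X,L^{\phi})$ lifts to a strategy in the tail of the game on $(G,L)$, and vice versa, when Ann is free to interleave vertices of $X$ with vertices outside $X$ in $(1)\Rightarrow(3)$. The clean fix is to observe that Ann controls which vertex is played, so in the derivation of $(3)$ she may simply commit to exhausting $X$ first; no interleaving need be handled. With that observation the whole proof is a short induction-free argument, which is why the authors call it trivial and omit it.
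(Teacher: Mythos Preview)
Your argument is the natural one, and since the paper omits the proof entirely there is no alternative approach to compare against. The implications $(2)\Rightarrow(1)$ and $(3)\Rightarrow(2)$ are handled cleanly.

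There is, however, a small but genuine gap in your $(1)\Rightarrow(3)$ step. You have Ann offer the vertices of $X$ one at a time and assert that Ben ``must answer'', but Ben can answer only if the offered vertex still has a legal colour, and for an arbitrary ordering of $X$ this can fail. Concretely, let $G$ be the path $[a,b,c,d,e]$ with $L(a)=\{1\}$, $L(b)=\{1,2\}$, $L(c)=\{2,3\}$, $L(d)=\{3,4\}$, $L(e)=\{4\}$ (this pair is infeasible) and take $X=\{a,b,c\}$; if Ann offers $c,b,a$ in that order, Ben's winning strategy may play $c\to 2$ and then $b\to 1$, after which $a$ has no legal colour and your $\phi$ is never completed. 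The remedy is to have Ann play not an arbitrary order but a \emph{winning} strategy for the indicated colouring game on the subgraph $(G[X],L)$: since only $X$-vertices are coloured during this phase, the constraints on them coincide with those of that subgame, so if it is an Ann-win then all of $X$ gets coloured whatever legal responses Ben makes, and the residual $(G-X,L^{\phi})$ is a Ben-win because Ben has been following his $(G,L)$-winning strategy throughout. This patch also reveals that item (3) as literally stated requires $(G[X],L)$ to be indicated $L$-colourable; when it is not (for instance $X=V(G)$ with $G$ not $L$-colourable, or $X$ containing a vertex with empty list), the equivalence actually fails. In every application in the paper this side condition holds automatically, which is why the authors leave it implicit.
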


\begin{cor}
    \label{cor-leaf}
    Assume $G$ is a graph, $L$ is a list assignment of $G$ and $v$ is a vertex of $G$ such that $L(v) = \{c\}$ consists of a single colour. Then $G$ is not indicated $L$-colourable if and only if $G-v$ is not indicated $L^{v \to c}$-colourable. 
\end{cor}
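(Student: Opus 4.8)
The statement is a direct consequence of Lemma \ref{lem-remaining}, and the plan is to read off each direction from it. The trivial case $|V(G)|=1$ can be dealt with first: then $v$ is the only vertex, $L(v)=\{c\}\neq\emptyset$ so $G$ is indicated $L$-colourable, while $G-v$ is the empty graph, which is indicated colourable, so both sides of the equivalence are false. Henceforth assume $|V(G)|\geq 2$. For the forward implication, suppose $G$ is not indicated $L$-colourable. Applying Lemma \ref{lem-remaining}(2) (equivalently (3) with $X=\{v\}$) to the vertex $v$, there is a colour $c'\in L(v)$ with $G-v$ not indicated $L^{v\to c'}$-colourable; since $L(v)=\{c\}$ forces $c'=c$, this gives that $G-v$ is not indicated $L^{v\to c}$-colourable.

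For the converse, suppose $G-v$ is not indicated $L^{v\to c}$-colourable. I would build a winning strategy for Ben in the indicated $L$-colouring game on $G$ by simulating a winning strategy $\sigma$ for Ben on $(G-v,L^{v\to c})$, which exists because the finite game is determined. Ben keeps a shadow play of $(G-v,L^{v\to c})$ in which he follows $\sigma$, maintaining that the real colouring and the shadow colouring agree on the already-coloured vertices of $G-v$, and that $v$, if coloured in the real game, is coloured $c$. When Ann picks $v$, Ben assigns $c$ if it is still available (otherwise $v$ is already uncolourable and Ben has won); while the simulation is intact this is always available, because $\sigma$ never assigns $c$ to a neighbour of $v$, its $L^{v\to c}$-list having lost $c$. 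When Ann picks $w\neq v$: if $\sigma$ prescribes a colour $c'$ for $w$ in the shadow play, Ben plays $c'$ in both games, and one checks $c'$ is legal in $G$ as well, since $c'\in L(w)$, $c'$ avoids the colours of the $(G-v)$-neighbours of $w$, and $c'\neq c$ whenever $w$ is adjacent to $v$. If instead $w$ has no legal colour in the shadow play, then in $G$ the only colour that can still be legal for $w$ is $c$, and only if $w$ is adjacent to $v$ and $v$ is uncoloured; in that sub-case Ben assigns $c$ to $w$, which makes $v$ uncolourable and Ben wins, and in every other sub-case $w$ is already uncolourable in $G$ and Ben wins. Since $\sigma$ is winning, the shadow play reaches a position with an uncolourable vertex after finitely many rounds, and the case analysis shows the real game ends with Ben winning within a bounded number of further rounds; the only alternative — the real game ending with all of $V(G)$ coloured — would yield a complete $\sigma$-consistent colouring of $G-v$, contradicting that $\sigma$ is a winning strategy for Ben.

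The main obstacle is the converse direction, and within it the one delicate sub-case in which Ann colours a neighbour of $v$ with the colour $c$: this breaks the simulation, but since it simultaneously makes $v$ uncolourable it is in fact favourable to Ben, so the bookkeeping still goes through; verifying this carefully (together with the impossibility of completing the real colouring) is the only non-routine point. Alternatively, the converse admits a clean proof by induction on $|V(G)|$ via Lemma \ref{lem-remaining}(2): for each $u\neq v$ the vertex $v$ retains the singleton list $\{c\}$ in $G-u$, so the inductive hypothesis, applied to $G-u$ with a colour $c_u$ supplied by Lemma \ref{lem-remaining}(2) for the pair $(G-v,L^{v\to c})$ and the fact that deleting $u$ and $v$ commutes for the induced lists, transfers infeasibility of $G-v$ to infeasibility of $G-u$, and then Lemma \ref{lem-remaining}(2) yields that $G$ is not indicated $L$-colourable.
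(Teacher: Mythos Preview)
Your proposal is correct and follows the same route the paper intends: the corollary is stated immediately after Lemma~\ref{lem-remaining} with no proof, and you derive both directions from that lemma. The forward direction is exactly the one-line application of Lemma~\ref{lem-remaining}(2) that the paper has in mind; for the converse you correctly observe that Lemma~\ref{lem-remaining}(2) alone does not give it instantly (it demands a bad colour at \emph{every} vertex, not just at $v$), and you supply the missing argument in two equivalent ways---a direct Ben-simulation and an induction via commuting deletions---both of which are sound and more detailed than anything the paper writes down.
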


\begin{lem}
    \label{lem-con}
    Assume $G$ is connected and $L$ is a degree-list assignment of $G$. If $(G,L)$ is infeasible, then $L$ is tight.
\end{lem}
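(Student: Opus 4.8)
The plan is to prove the contrapositive: if a degree-list assignment $L$ of the connected graph $G$ is \emph{not} tight, then $(G,L)$ is feasible, i.e.\ $G$ is indicated $L$-colourable. So fix a vertex $v$ with $|L(v)| > d_G(v)$ and argue by induction on $|V(G)|$. If $|V(G)| = 1$ then $d_G(v)=0$, hence $L(v)\neq\emptyset$, and $G$ is indicated $L$-colourable by the first sentence of Lemma~\ref{lem-remaining}.

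Now suppose $|V(G)|\ge 2$. Since $G$ is connected, each connected component $C$ of $G-v$ contains at least one neighbour $u_C$ of $v$. Write $L_C$ for the restriction of $L$ to $V(C)$. For every $w\in V(C)$ we have $|L(w)|\ge d_G(w)\ge d_C(w)$, and for $w=u_C$ the inequality is strict, because $d_C(u_C)=d_G(u_C)-1$. Hence $L_C$ is a degree-list assignment of the connected graph $C$ that is not tight, and since $|V(C)|<|V(G)|$ the induction hypothesis gives that $C$ is indicated $L_C$-colourable; let $\sigma_C$ be a winning strategy for Ann in that game.

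Ann's strategy for the indicated $L$-colouring game on $G$ is then as follows: list the components $C_1,\dots,C_m$ of $G-v$, play $\sigma_{C_1}$ to completion on $C_1$, then $\sigma_{C_2}$ on $C_2$, and so on, leaving $v$ for the very last move. The point to verify is that each $\sigma_{C_j}$ remains winning when executed inside the game on $G$: a vertex $u\in V(C_j)$ has all of its $G$-neighbours in $V(C_j)\cup\{v\}$, so as long as $v$ is uncoloured — which holds throughout phases $1,\dots,m$ — the colours Ben may legally assign to $u$ in the game on $G$ coincide with those in the stand-alone game on $C_j$ (colours already placed on the earlier components are irrelevant, there being no edges between distinct components, and no uncoloured vertex of a later component has lost any colour from its list). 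Consequently, after phase $m$ all of $G-v$ carries a proper $L$-colouring $\phi$, and $|L^{\phi}(v)|\ge |L(v)|-d_G(v)\ge 1$, so $L^\phi(v)\neq\emptyset$ and Ben is forced to colour $v$; thus every vertex gets coloured and Ann wins, so $(G,L)$ is feasible.

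The only slightly delicate step is the ``independence of the sub-games'' claim in the last paragraph; it rests entirely on the structural fact that every edge of $G$ leaving a component of $G-v$ is incident to $v$, together with $v$ being coloured last. One could alternatively route the induction through Lemma~\ref{lem-remaining}(2) with the choice $w=v$, but then some care is needed, because deleting a colour of $v$ from the list of a neighbour of $v$ could turn the restricted list assignment tight on a component of $G-v$ and block the induction; postponing $v$ to the end, as above, avoids this issue completely.
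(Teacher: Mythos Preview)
Your proof is correct. The induction on $|V(G)|$, splitting $G-v$ into its components and applying the hypothesis to each with the slack at a neighbour $u_C$ of $v$, goes through cleanly; the ``independence of sub-games'' observation is the right way to justify that the concatenated strategy is still winning for Ann, and finishing with $v$ uses up at most $d_G(v)$ colours from $L(v)$.

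The paper's proof is noticeably shorter and non-inductive: it simply picks any ordering $v_1,\dots,v_n$ with $v_n=v$ and each $v_i$ ($i<n$) having a later neighbour (such orderings exist because $G$ is connected), and observes that Ann choosing vertices in this order wins greedily, since every $v_i$ with $i<n$ has at most $d_G(v_i)-1$ already-coloured neighbours, while $v_n=v$ has $|L(v)|>d_G(v)$. Your argument, when unwound, produces one particular such ordering, but packages it as a structural induction on components. The paper's route is quicker; yours has the minor advantage of making explicit that the components can be handled completely independently, and your closing remark about why routing through Lemma~\ref{lem-remaining}(2) is less convenient is a nice diagnostic.
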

\begin{proof}
    Assume to the contrary that $|L(v)| > d_G(v)$. Order the vertices of $G$ as $v_1,v_2,\ldots, v_n$ so that $v_n=v$ and each vertex $v_i$ with $i < n$ has a neighbour $v_j$ with $j > i$. It is obvious that choosing vertices in this order is a winning strategy for Ann.
\end{proof}

\begin{lem}
    \label{lem-color}
    Assume $(G,L)$ is an infeasible pair and $v$ is a vertex of $G$. If $G-v$ is connected and not indicated $L^{v \to c}$-colourable, then $c \in L(x)$ for all $x \in N_G(v)$.
\end{lem}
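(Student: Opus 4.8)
The plan is to argue by contradiction, leveraging Lemma~\ref{lem-con} applied to the pair $(G-v, L^{v\to c})$. Suppose there is a neighbour $x \in N_G(v)$ with $c \notin L(x)$. The idea is to show that this forces $L^{v\to c}$ to be a \emph{non-tight} degree-list assignment of the connected graph $G-v$, which contradicts Lemma~\ref{lem-con} once we observe that $(G-v, L^{v\to c})$ is infeasible (it is a degree-list pair that, by hypothesis, is not indicated $L^{v\to c}$-colourable).

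First I would verify that $L^{v\to c}$ is indeed a degree-list assignment of $G-v$. For a vertex $w \in V(G-v)$ not adjacent to $v$, deleting $v$ changes neither its degree nor its list, so $|L^{v\to c}(w)| = |L(w)| \ge d_G(w) = d_{G-v}(w)$. For $w \in N_G(v)$, we have $L^{v\to c}(w) = L(w) \setminus \{c\}$, hence $|L^{v\to c}(w)| \ge |L(w)| - 1 \ge d_G(w) - 1 = d_{G-v}(w)$, the final equality holding because exactly one neighbour of $w$, namely $v$, was removed. So $L^{v\to c}$ is a degree-list assignment of $G-v$.

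Now, since $G-v$ is connected and $(G-v, L^{v\to c})$ is infeasible, Lemma~\ref{lem-con} yields that $L^{v\to c}$ is tight, i.e.\ $|L^{v\to c}(w)| = d_{G-v}(w)$ for every $w \in V(G-v)$. Applying this to $w = x$: since $x \in N_G(v)$ but $c \notin L(x)$, we get $L^{v\to c}(x) = L(x) \setminus \{c\} = L(x)$, so $|L^{v\to c}(x)| = |L(x)| \ge d_G(x) = d_{G-v}(x) + 1 > d_{G-v}(x)$, contradicting tightness. Hence $c \in L(x)$; as $x$ was an arbitrary neighbour of $v$, this proves the lemma.

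The whole argument is bookkeeping on list sizes versus degrees, so I do not expect a serious obstacle. The only point needing care is confirming that removing $v$ drops the degree of each neighbour by exactly $1$ while removing at most the single colour $c$ from its list — this is exactly what makes the violation of tightness surface precisely at those neighbours $x$ with $c \notin L(x)$.
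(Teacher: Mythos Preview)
Your proof is correct and follows the same approach as the paper: verify that $L^{v\to c}$ is a degree-list assignment of $G-v$, then observe that a neighbour $x$ with $c\notin L(x)$ would make this assignment non-tight at $x$, contradicting Lemma~\ref{lem-con}. The paper's version is just terser, asserting without details that $L^{v\to c}$ is a degree-list assignment and that it fails to be tight when $c\notin L(x)$.
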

\begin{proof}
    It is easy to see that $L^{v \to c}$ is a degree-list assignment of $G-v$. If $c \notin L(x)$ for some $x \in N_G(v)$, then $L^{v \to c}$ is not tight. By Lemma \ref{lem-con},  $G-v$ is indicated $L^{ v \to c}$-colourable, a contradiction.
\end{proof}

\begin{lem}
\label{lem-color1}
    Assume $(G,L)$ is an infeasible pair and $v$ is a vertex of $G$. Then $L(v)\subseteq \cup_{u\in N(v)} L(u)$.
\end{lem}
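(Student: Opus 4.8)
The plan is to prove the contrapositive in the following strong form: \emph{if there is a colour $c \in L(v)$ with $c \notin L(u)$ for every $u \in N_G(v)$, then $(G,L)$ is feasible}. Since $(G,L)$ is assumed infeasible, this immediately gives $L(v) \subseteq \bigcup_{u\in N(v)} L(u)$. If $L(v)=\emptyset$ there is nothing to prove; in particular the case $|V(G)|=1$ is vacuous, because then $L$ being a tight degree-list assignment (Lemma \ref{lem-con}) forces $L(v)=\emptyset$. So we may assume $|V(G)|\ge 2$ and fix $c\in L(v)$ with $c\notin L(u)$ for all $u\in N_G(v)$.

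The first step is to pass to $G-v$ with the restricted list $L'$ of $L$. I would check that $L'$ is a degree-list assignment of $G-v$ that fails to be tight on \emph{every} component: if $G_j$ is a component of $G-v$, then since $G$ is connected $G_j$ contains some neighbour $u$ of $v$, and every neighbour of $u$ other than $v$ lies in $G_j$, whence
\[
|L'(u)| = |L(u)| \ge d_G(u) = d_{G_j}(u)+1 > d_{G_j}(u).
\]
By the contrapositive of Lemma \ref{lem-con}, each pair $(G_j, L'|_{G_j})$ is feasible, i.e.\ $G_j$ is indicated $L'|_{G_j}$-colourable; since there are no edges between distinct components, Ann can run these strategies in parallel, so $G-v$ is indicated $L'$-colourable.

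The second step is to lift a winning strategy of Ann on $(G-v,L')$ to one on $(G,L)$. Ann follows that strategy, always picking a vertex of $G-v$, and leaves $v$ untouched until $G-v$ is completely coloured. While $v$ is still uncoloured, any vertex $u\in V(G-v)$ that Ann picks has exactly the same coloured neighbours and the same available list in the $(G,L)$-game as in the $(G-v,L')$-game, so every legal reply of Ben is also a legal reply in the latter; hence $G-v$ does get fully coloured. At that point the colour $c$ is still available at $v$, because no neighbour of $v$ has $c$ in its list and therefore none was coloured $c$; Ann picks $v$ and Ben is forced to colour it. Thus Ann wins the indicated $L$-colouring game on $G$, contradicting infeasibility. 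The only point requiring a little care is this reduction to $G-v$ — one must verify the simulation on $G-v$ is faithful even when $G-v$ is disconnected (Ben gains nothing from the absence of $v$ during that phase) and that Ben cannot stall at the final move (the colour $c$ forces him to colour $v$) — but this is routine bookkeeping with the definitions, so I anticipate no real obstacle.
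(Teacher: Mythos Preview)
Your proof is correct and follows the same underlying idea as the paper's: reserve $v$ until last and observe that the colour $c$ survives. The paper's proof is much terser—it simply notes that $c\in L^\phi(v)$ for every $L$-colouring $\phi$ of $G-v$ and concludes—while you explicitly justify, via Lemma~\ref{lem-con} applied to each component of $G-v$, that Ann can actually get $G-v$ fully coloured before turning to $v$; this is a detail the paper leaves implicit.
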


\begin{proof}
    Assume to the contrary that $c\in L(v)\backslash \cup_{u\in N(v)} L(u)$. For any $L$-colouring $\phi$ of $G-v$, $c\in L^\phi (v)$. Then $G$ is indicated $L$-colourable, a contradiction.
\end{proof}

\section{Three graph operations }

Two  vertices $x$ and $y$ of a graph $G$ are {\em adjacent twins} if $N_G[x] = N_G[y]$. 

\begin{definition}
    Assume $G$ is a graph, $L$ is a degree-list assignment of $G$ and $v$ is a vertex of $G$.  
    \begin{enumerate}
        \item For  a colour $c \notin L(x)$ for any $x \in N_G[v]$, 
        $D_{v,c}(G,L) = (G',L')$ is defined as follows:  
     
      $G'$ is obtained from $G$ by adding     a vertex $v'$ and adding  edges $v'x$ for $x \in N_G[v]$, and $L'$ is the list assignment   of $G'$ defined as 
       \[
       L'(x) = \begin{cases} L(x) \cup \{c\}, &\text{ if $x \in N_G[v]$}, \cr 
       L(v) \cup \{c\}, &\text{ if $x = v'$}, \cr 
       L(x), &\text{ otherwise}. 
       \end{cases}
       \]

     We call the pair $(G',L')$ obtained from $(G,L)$ by {\em duplicating $v$} (see Fig. \ref{fig-1}). 
        \item Assume  $d_G(v)=2$. $T_v(G,L)=(G',L')$ is the pair defined as follows:
        
         $G'$ is obtained from $G$   by replacing  $v$ with a path $[v_1,v_2,v_3]$ and with each of $v_1$ and $v_3$ adjacent to one neighbor of $v$, and $L'$ is a list assignment of $G'$ defined as $L'(x)=L(x)$ for $x \in V(G)-\{v\}$ and $L'(v_i)=L(v)$ for $i=1,2,3$. 

         We call the pair $(G',L')$ obtained from $(G,L)$ by {\em tripling the vertex $v$} (see Fig. \ref{fig-2}).
    \end{enumerate}   
\end{definition}

%If $(G',L')$ is obtained from $(G,L)$ by duplicating $v$, then we say $(G,L)$ is obtained from $(G',L')$ by {\em contracting a pair of adjacent twins}.
%If $(G',L')$ is obtained from $(G,L)$ by tripling $v$, then we say $(G,L)$ is obtained from $(G',L')$ by {\em contracting the path $v_1v_2v_3$}.

\begin{definition}
    \label{def-vertexsum}
    Assume for $i=1,2$, $G_i$ is a graph and $L_i$ is a list assignment of $G_i$ and $v_i$ is a vertex of $G_i$ such that $L_1(v_1) \cap L_2(v_2) = \emptyset$. Let $G$ be obtained from the disjoint union of $G_1$ and $G_2$ by identifying $v_1$ and $v_2$ into a single vertex $v^*$. Let $L$ be the list assignment of $G$ defined as $L(v) = L_i(v)$ if $v \in V(G_i) - \{v_i\}$ and $L(v^*) =L_1(v_1) \cup L_2(v_2)$. Then we say $(G,L)$ is the {\em vertex sum } of $(G_1,L_1)$ and $(G_2, L_2)$ with respect to vertices $v_1, v_2$, and write 
    $(G,L) = (G_1,L_1) \oplus_{(v_1,v_2)} (G_2, L_2)$ (see Fig. \ref{fig-3}). 
\end{definition}

\begin{lem}
    \label{lem-main1}
    If $(G,L)$ is   infeasible, $v \in V(G)$ 
    and $c \notin \cup_{x \in N_G[v]}L(x)$, then 
    $D_{(v,c)}(G,L)$ is  infeasible. 
\end{lem}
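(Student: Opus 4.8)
The plan is to show directly that if $(G,L)$ is infeasible, then $(G',L') = D_{(v,c)}(G,L)$ is infeasible, by exhibiting a winning strategy for Ben in the indicated $L'$-colouring game on $G'$. Recall $G'$ is obtained from $G$ by adding a vertex $v'$ adjacent to every vertex of $N_G[v]$, and $L'$ agrees with $L$ except that $c$ is added to $L(x)$ for every $x\in N_G[v]$ and $L'(v') = L(v)\cup\{c\}$. The key observation is that $v$ and $v'$ become adjacent twins in $G'$ with $N_{G'}[v]=N_{G'}[v'] = N_G[v]\cup\{v'\}$ (resp.\ $\cup\{v\}$), and $d_{G'}(v)=d_{G'}(v') = d_G(v)+1$, so $L'$ is still a (tight) degree-list assignment.

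First I would set up Ben's strategy. Ben maintains a ``shadow'' play of the indicated $L$-colouring game on $G$ in which, by infeasibility and Lemma \ref{lem-remaining}(1), he has a winning strategy $\sigma$. When Ann, in the game on $G'$, picks an uncoloured vertex $w$, Ben responds as follows. If $w\notin\{v,v'\}$, he treats this as Ann picking $w$ in the shadow game, lets $\sigma$ dictate a colour $c'\in L(w)$ avoiding the colours on coloured $G$-neighbours, and plays $c'$ on $w$ in $G'$ — this is legal because $w$'s only extra neighbour in $G'$ is $v'$ (if $w\in N_G[v]$), and Ben will guarantee $v'$ gets colour $c$ (see below), while $c\notin L(w)$ originally so $c\ne c'$. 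If $w\in\{v,v'\}$ and this is the first of the pair to be picked, Ben colours it $c$ — this is always legal since $c$ is not on the list of any vertex in $N_G[v]$ in the list assignment $L$, hence no $G$-neighbour of $v$ can have received colour $c$ in the shadow game; so the only vertex that could block $c$ is the twin partner, which is still uncoloured. When the second of $\{v,v'\}$ is later picked, Ben consults $\sigma$ for Ann's move ``pick $v$'' in the shadow game, obtaining a colour $c^*\in L(v)$ with $G-v$ not indicated $L^{v\to c^*}$-colourable (Lemma \ref{lem-remaining}(2)), and plays $c^*$ on this second vertex; legality holds because its $G'$-neighbours other than the twin (which is coloured $c\ne c^*$, as $c\notin L(v)$) are exactly the coloured $G$-neighbours of $v$ in the shadow position, and $\sigma$ guarantees $c^*$ avoids them.

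Next I would argue this is winning for Ben. The coloured vertices of $G'$ outside $\{v,v'\}$ always carry exactly the colours of the corresponding shadow game on $G$; the vertices $v,v'$ carry $\{c, c^*\}$ where $c^*$ is the colour $\sigma$ would assign $v$. Since $\sigma$ wins the game on $G$, the shadow play reaches a position with an uncoloured vertex $u\in V(G)$ all of whose $L$-colours are used on its coloured $G$-neighbours. If $u\notin N_G[v]$, then in $G'$ its neighbourhood and list are unchanged, so $u$ is stuck in $G'$ too. If $u\in N_G[v]$, its $G'$-neighbours include $v'$, and its list gained $c$; but $v'$ (or $v$) is coloured $c$, so all of $L'(u) = L(u)\cup\{c\}$ is still covered — $u$ is stuck. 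Either way Ben wins the game on $G'$, so $(G',L')$ is infeasible. The main subtlety to handle carefully is the interleaving: Ann may pick $v$ or $v'$ at any time, possibly before or between shadow-game moves, so I must check that colouring the first twin $c$ never conflicts with already-coloured vertices (it does not, since $c\notin L(x)$ for $x\in N_G[v]$ under $L$) and that deferring the ``real'' shadow response to the second twin-pick is consistent with $\sigma$ being an adaptive strategy — which it is, because $\sigma$ only needs to be queried when Ann actually picks $v$, and from Ben's side the state of the shadow game is well-defined at every moment.
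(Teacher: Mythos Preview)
Your approach is sound and genuinely different from the paper's. The paper proceeds by induction on $|V(G)|$, using the recursive characterisation in Lemma~\ref{lem-remaining}(2): for each vertex $x$ of $G'$ one exhibits a colour $a$ such that $(G'-x, L'^{x\to a})$ is again of the form $D_{(v,c)}(\cdot)$ applied to a smaller infeasible pair (when $x\ne v,v'$), or is literally $(G,L)$ (when $x\in\{v,v'\}$ and $a=c$). This is shorter and more structural. Your shadow-game argument, by contrast, builds an explicit winning strategy for Ben and makes the mechanism transparent without recursion; it would also adapt naturally to other ``copy a vertex'' constructions.

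There is one point you should tighten. In your winning argument you assert that when the shadow game is stuck at some $u\in N_G[v]$, then ``$v'$ (or $v$) is coloured $c$'' in $G'$. This is fine when $u\in N_G(v)$: since $(G,L)$ infeasible forces $L$ to be tight (Lemma~\ref{lem-con}), $u$ being stuck means \emph{all} its $G$-neighbours, including $v$, are already coloured in the shadow game, hence both twins have been selected in $G'$ and one of them carries $c$. But if $u=v$ itself is the stuck vertex in the shadow game, neither twin need yet have been selected in $G'$, so your assertion can fail at that moment. The fix is easy: either note that once Ann eventually selects one of $v,v'$ (which she must, to win), Ben plays $c$ and the remaining twin is immediately stuck; or, more cleanly, replace the whole winning paragraph with a contradiction argument---if Ann were to win the $G'$-game then in particular both $v$ and $v'$ get coloured, whence the shadow game on $G$ is fully coloured as well, contradicting that $\sigma$ is winning for Ben.
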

\begin{proof}
   Let  $(G', L')=D_{(v,c)}(G,L)$. It is obvious that $L'$ is a degree-list assignment of $G'$.
    We prove by induction on the number of vertices of $G$ that $G'$ is not indicated $L'$-colourable. 
    By Lemma \ref{lem-remaining}, it suffices to show that for any vertex $x$ of $G'$, there is a colour $a \in L'(x)$ such that $G'-x$ is not indicated $L'^{x \to a}$-colourable. 
    
    Assume $x$ is a vertex of $G$. If $x \ne v,v'$, then there is a colour $a \in L(x)$ such that 
    $(G-x, L^{x \to a})$ is infeasible. It is easy to verify that $(G'-x, L'^{x \to a}) = D_{(v,c)}(G-x, L^{x \to a})$. By induction hypothesis, $(G'-x, L'^{x \to a})$ is infeasible.
    
    If $x =v$ or $v'$, then $(G'-x, L'^{x \to c}) = (G,L)$. Hence $(G'-x, L^{x \to c})$ is infeasible. 
    Therefore $(G',L')$ is  infeasible.
\end{proof}

\begin{lem}
    \label{lem-main2}
    If $(G,L)$ is   infeasible, $v \in V(G)$ 
    and $d_G(v)=2$, then 
    $T_v(G,L)$ is   infeasible. 
\end{lem}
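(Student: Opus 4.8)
The plan is to mimic the structure of the proof of Lemma \ref{lem-main1}: induct on the number of vertices of $G$ and use the characterization of infeasibility given by Lemma \ref{lem-remaining}, namely that it suffices to exhibit, for every vertex $x$ of $G' = T_v(G,L)$, a colour $a \in L'(x)$ such that $(G'-x, L'^{x\to a})$ is infeasible. Write $G'$ as the graph in which $v$ has been replaced by the path $[v_1,v_2,v_3]$, with $v_1$ adjacent to one neighbour $u_1$ of $v$ and $v_3$ adjacent to the other neighbour $u_2$ of $v$ (so $d_{G'}(v_1)=d_{G'}(v_3)=2$ and $d_{G'}(v_2)=2$), and $L'(v_i)=L(v)$ for $i=1,2,3$ while $L'(x)=L(x)$ elsewhere. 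Since $L$ is a degree-list assignment (indeed tight, by Lemma \ref{lem-con}, as $(G,L)$ is infeasible), $L'$ is a degree-list assignment of $G'$; note $|L(v)|=d_G(v)=2$.

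First I would handle the vertices $x \in V(G)\setminus\{v\}$. For such $x$, infeasibility of $(G,L)$ gives a colour $a\in L(x)$ with $(G-x, L^{x\to a})$ infeasible. The key verification is that $(G'-x, L'^{x\to a}) = T_{v}(G-x, L^{x\to a})$: deleting $x$ commutes with the tripling operation because $x\ne v$, and the colour $a$ is removed from the lists of $v_1$ (and/or $v_3$) exactly when $x=u_1$ (resp. $x=u_2$) is a neighbour of $v$, which matches how $a$ is removed from $L(v)$ in forming $L^{x\to a}$. One subtlety: if removing $a$ from $L(v)$ makes $d_{G-x}(v)$ drop to $1$, then in $G-x$ the vertex $v$ has degree $1$; I should check that $T_v$ applied to a degree-$2$ vertex still makes sense here — in fact $d_{G-x}(v)=1$ only if $x$ is adjacent to $v$, and then in $G'-x$ one of $v_1,v_3$ also has degree $1$, so the correspondence $T_v(G-x,L^{x\to a})$ still holds with the appropriate (possibly degenerate) reading; I may instead phrase the induction so that $T_v$ is only invoked when $d(v)=2$ and otherwise use that $(G-x,L^{x\to a})$ with $v$ of degree $1$ forces $v$'s colour, reducing to a smaller instance directly via Corollary \ref{cor-leaf}. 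By the induction hypothesis, $(G'-x, L'^{x\to a})$ is infeasible.

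Next the three new vertices $v_1,v_2,v_3$. For $v_2$: pick any colour $a\in L(v)=L'(v_2)$; then $G'-v_2$ has $v_1$ and $v_3$ as degree-$1$ vertices with lists $L(v)$, and I claim $(G'-v_2, L'^{v_2\to a})$ is infeasible by repeatedly contracting the forced leaves $v_1,v_3$ via Corollary \ref{cor-leaf} — colouring $v_1$ and $v_3$ (each with the unique remaining colour if $|L(v)|=2$ and $a$ is one of them, leaving one colour each) and checking that what remains is $(G,L)$ with $v$'s colour partially constrained; more precisely one shows the resulting pair is of the form $(G, L^{v\to a})$ or reduces to $(G,L)$ itself. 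For $v_1$ (and symmetrically $v_3$): here the argument should use that $L(v)$ has exactly two colours, say $L(v)=\{c_1,c_2\}$; I would show that for a suitable choice $a\in\{c_1,c_2\}$, $(G'-v_1, L'^{v_1\to a})$ is infeasible, again using that $v_2$ becomes a leaf in $G'-v_1$ with a two-element list minus possibly $a$, and then applying Corollary \ref{cor-leaf} to reduce back toward $(G,L)$.

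The main obstacle I anticipate is the bookkeeping for $v_1$ and $v_3$ (and to a lesser extent $v_2$): unlike the $D_{(v,c)}$ case, where the new vertex $v'$ had $(G'-v', L'^{v'\to c})=(G,L)$ exactly, here deleting a path-vertex leaves a leaf (or two) that must be eliminated with Corollary \ref{cor-leaf}, and I must check that after eliminating these leaves the colour that survives on $u_1,u_2$ is consistent — i.e. that the reduction genuinely lands on an infeasible pair equal to or refining $(G,L)$, using tightness of $L$ at $v$ and the fact that $L(v)\subseteq L(u_1)\cap L(u_2)$ is **not** assumed, so I must be careful about whether the surviving colour actually lies in $L(u_i)$. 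If it does not, Lemma \ref{lem-color} / tightness would actually make the reduced pair vacuously fine, and I should argue feasibility cannot occur because the structure still forces Ben to win. Getting this case analysis clean — perhaps by choosing which of $c_1,c_2$ to force on $v_1$ according to a colour $a\in L(v)$ witnessing $(G-v,\text{forced})$ infeasibility via property (2) of Lemma \ref{lem-remaining} applied at $v$ in $(G,L)$ — is where the real work lies.
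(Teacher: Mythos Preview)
Your approach is the same as the paper's, and the skeleton is right: induct via Lemma~\ref{lem-remaining}, use the commuting identity $(G'-x,L'^{x\to a})=T_v(G-x,L^{x\to a})$ when $x\notin N_{G'}[v_1]\cup N_{G'}[v_3]$, and reduce via Corollary~\ref{cor-leaf} otherwise. Two points need tightening, and one worry can be dropped.

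First, for $x\in\{v_1,v_2,v_3\}$ you cannot ``pick any colour''. Write $L(v)=\{c_1,c_2\}$ and \emph{first} apply Lemma~\ref{lem-remaining} at $v$ in $(G,L)$ to fix $c_1$ with $(G-v,L^{v\to c_1})$ infeasible. Then for $x=v_2$ you must choose $a=c_2$ (not $c_1$): this leaves $v_1,v_3$ as leaves with list $\{c_1\}$, and two applications of Corollary~\ref{cor-leaf} land exactly on $(G-v,L^{v\to c_1})$. For $x=v_1$ you choose $a=c_1$: then $v_2$ becomes a leaf with list $\{c_2\}$, and after one more leaf step $v_3$ has list $\{c_1\}$, again collapsing to $(G-v,L^{v\to c_1})$. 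The case $x=v_3$ is symmetric. Your phrase ``the resulting pair is of the form $(G,L^{v\to a})$ or reduces to $(G,L)$ itself'' is not right; the target is always $(G-v,L^{v\to c_1})$.

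Second, for $x=u_1$ (or $u_2$) you correctly note that $T_v$ no longer applies because $d_{G-x}(v)=1$, and that Corollary~\ref{cor-leaf} is the tool. But the leaf chain only works if the witness colour $a$ (chosen so that $(G-x,L^{x\to a})$ is infeasible) lies in $L(v)$, so that $v_1$ is left with a single colour. This is exactly what Lemma~\ref{lem-color} supplies (applied to $(G,L)$ with the vertex $x$), and you should invoke it; without it there is a genuine gap. Once $a\in L(v)$, two applications of Corollary~\ref{cor-leaf} (to $v_1$ then $v_2$) reduce $(G'-x,L'^{x\to a})$ to $(G-x,L^{x\to a})$ directly---no induction needed for this case.

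Finally, your concern that $L(v)\subseteq L(u_1)\cap L(u_2)$ is not assumed is a red herring: in the reductions above, removing $c_1$ from $L(u_i)$ yields $L(u_i)\setminus\{c_1\}$ whether or not $c_1\in L(u_i)$, and this is precisely the list of $u_i$ in $(G-v,L^{v\to c_1})$. No separate argument is needed.
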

\begin{proof}
    Let $(G',L')=T_v(G,L)$.  Again,  it is obvious that $L'$ is a degree-list assignment of $G'$, and it suffices to show that for any vertex $x$ of $G'$, there is a colour $c \in L'(x)$ such that $(G'-x,L'^{x \to c})$ is infeasible.

    Assume $x$ is a vertex of $G'$. If $x \not\in N_{G'}[v_1] \cup N_{G'}[v_3]$, then let $a \in L(x)$ be a colour such that $(G-x,L^{x\to a})$ is infeasible.
    Then $(G'-x, L'^{x \to a})=T_v(G-x,L^{x \to a})$. Hence by induction hypothesis, $(G'-x, L'^{x \to a})$ is infeasible.
    If $x \ne v_2$ is adjacent to $v_1$, then let $a \in L(x)$ be a colour such that $(G-x, L^{x\to a})$ is infeasible. By Lemma \ref{lem-color}, $a \in L'(v_1)=L(v)$. By applying Corollary \ref{cor-leaf} to $v_1, v_2$ successively with respect to $(G-x, L^{x\to a})$, 
    we obtain a pair $(G'-\{x,v_1,v_2\}, L'') = (G-x, L^{x \to a})$, which is infeasible. Hence $(G'-x, L'^{x \to a})$ is infeasible. 

   The case $x \ne v_2$ is adjacent to $v_3$ is symmetric.

   It remains to show that for $x \in \{v_1,v_2,v_3\}$, there is a colour $c \in L(x)$ such that
   $(G'-x,L'^{x \to c})$ is infeasible.

   Assume $L(v) = \{c_1,c_2\}$.
    By Lemma \ref{lem-remaining}, we may assume that $(G-v, L^{v \to c_1})$ is infeasible.
    
    If $x=v_2$, then by applying Corollary \ref{cor-leaf} to   $v_1$ and $v_3$ with   respect to  $(G'-v_2, L'^{v_2 \to c_2})$, we obtain a pair $(G'-\{v_1,v_2,v_3\}, L'') = (G-v, L^{v \to c_1})$, which is infeasible.
    Hence $(G'-v_2, L'^{v_2 \to c_2})$ is infeasible.
    
    If $x=v_1$, then  by applying Corollary \ref{cor-leaf} to $v_2, v_3$ with respect to  $(G'-v_1, L'^{v_1 \to c_1})$, and we obtain a pair $(G'-\{v_1,v_2,v_3\}, L'') = (G-v, L^{v \to c_1})$, which is infeasible.  Hence $(G'-v_1, L'^{v_1 \to c_1})$ is infeasible.

    The case $x =v_3$ is symmetric. 

    Therefore $(G',L')$ is infeasible.
\end{proof}

\begin{lem}
    \label{lem-vs} 
    If $(G_i, L_i)$ is infeasible for $i=1,2$,   $v_i \in V(G_i)$ and $L_1(v_1) \cap L_2(v_2) = \emptyset$, then $(G,L)=(G_1, L_1) \oplus_{(v_1,v_2)} (G_2,L_2)$ is infeasible.
\end{lem}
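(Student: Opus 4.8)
The plan is to prove that the vertex sum of two infeasible pairs is infeasible by directly exhibiting, for the combined pair $(G,L)$, a response strategy for Ben, using the fact that a response strategy is already guaranteed for each $(G_i, L_i)$ via Lemma \ref{lem-remaining}. First I would record that $L$ is a degree-list assignment of $G$: every vertex in $V(G_i)-\{v_i\}$ keeps its list and its $G$-degree equals its $G_i$-degree, while the merged vertex $v^*$ has $d_G(v^*) = d_{G_1}(v_1) + d_{G_2}(v_2) = |L_1(v_1)| + |L_2(v_2)| = |L(v^*)|$, using that the two lists are disjoint. So it suffices, by Lemma \ref{lem-remaining}(2), to show that for every vertex $x$ of $G$ there is a colour $a \in L(x)$ such that $(G-x, L^{x\to a})$ is infeasible. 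I expect to run an induction on $|V(G_1)| + |V(G_2)|$, with the base cases being when one of the $G_i$ is $K_1$ (so its vertex is $v_i$ with empty list), in which case the vertex sum is essentially the other pair with one colour removed from one list, and infeasibility is immediate or follows from Corollary \ref{cor-leaf}.

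The induction step splits into cases according to where $x$ lies. If $x \in V(G_1) - \{v_1\}$: pick $a \in L_1(x)$ with $(G_1 - x, L_1^{x\to a})$ infeasible (exists by Lemma \ref{lem-remaining} applied to $(G_1,L_1)$). The key observation is that $(G - x, L^{x\to a})$ is again a vertex sum, namely $(G_1 - x, L_1^{x\to a}) \oplus_{(v_1,v_2)} (G_2, L_2)$ — here I must check that the disjointness hypothesis still holds, which it does since $L_1^{x\to a}(v_1) \subseteq L_1(v_1)$, and that the merged list is correct, which it is because $x \ne v_1$ so the colour removed at $v_1$ (if $x$ is a neighbour of $v_1$) is removed on the $G_1$ side only. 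By the induction hypothesis this vertex sum is infeasible, as desired. The case $x \in V(G_2) - \{v_2\}$ is symmetric.

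The remaining and most delicate case is $x = v^*$. Here I would like to use infeasibility of $(G_1, L_1)$ at the vertex $v_1$: there is a colour $a \in L_1(v_1)$ with $(G_1 - v_1, L_1^{v_1 \to a})$ infeasible. Colour $v^*$ with this $a$; since $a \in L_1(v_1)$ and $L_1(v_1) \cap L_2(v_2) = \emptyset$, the colour $a$ does not appear in $L_2(u)$ for any $u \in N_{G_2}(v_2)$ by Lemma \ref{lem-color1}... actually more simply, I will argue that $(G - v^*, L^{v^* \to a})$ decomposes as the disjoint union of $(G_1 - v_1, L_1^{v_1\to a})$ and $(G_2 - v_2, L_2^{v_2 \to a})$, where on the $G_2$ side removing $a$ changes nothing because $a \notin L_2(u)$ for every neighbour $u$ of $v_2$ in $G_2$ — this is exactly where disjointness of $L_1(v_1)$ and $L_2(v_2)$, combined with Lemma \ref{lem-color1} giving $L_2(v_2) \subseteq \bigcup_{u \in N_{G_2}(v_2)} L_2(u)$... hmm, rather I need $\bigcup_{u} L_2(u)$ to avoid $a$, which is not automatic. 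Let me instead just take $a \in L_1(v_1)$; then on $G_2 - v_2$ the vertices adjacent to $v_2$ may or may not have $a$ in their list, but $L_2^{v_2\to a}(u) = L_2(u) \setminus \{a\}$; since these vertices had list size equal to their $G_2$-degree which equals their $G$-degree, and... the cleanest route: a disjoint union of pairs $(H_1, M_1) \cup (H_2, M_2)$ is infeasible iff one of them is. So it suffices that $(G_1 - v_1, L_1^{v_1\to a})$ is infeasible, which it is, and that $G - v^*$ is the disjoint union of $G_1 - v_1$ and $G_2 - v_2$ with the list on $G_1 - v_1$ being exactly $L_1^{v_1 \to a}$ — and that holds because no vertex of $G_2 - v_2$ is adjacent in $G$ to any vertex of $G_1$. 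Thus $(G - v^*, L^{v^*\to a})$ is infeasible regardless of what happens on the $G_2$ side. The main obstacle I anticipate is bookkeeping the list-assignment identities across the identification and making the ``disjoint union'' reduction precise; once that is set up, every case closes by the induction hypothesis or by the trivial fact that a disjoint union is infeasible as soon as one component is.
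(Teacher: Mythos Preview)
Your proposal is correct and follows essentially the same route as the paper: induct on the total number of vertices, and for each $x$ use Lemma~\ref{lem-remaining}(2), treating $x\ne v^*$ by recognising $(G-x,L^{x\to a})$ as a smaller vertex sum, and treating $x=v^*$ by noting that $G-v^*$ splits into $G_1-v_1$ and $G_2-v_2$ with the $(G_1-v_1)$-side carrying exactly $L_1^{v_1\to a}$, so one component is already not indicated colourable. Your detour worrying about whether $a$ lies in lists on the $G_2$ side is unnecessary---as you eventually observe, the disconnected union is infeasible once one component is---and the paper skips it entirely; likewise the paper does not bother to record that $L$ is a degree-list assignment, since Lemma~\ref{lem-remaining} applies to arbitrary list assignments and ``infeasible'' is being used as shorthand for ``not indicated $L$-colourable''.
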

\begin{proof}
   Let $v^*$ be the new vertex identified by $v_1$ and $v_2$. 
    We need to show that for any vertex $v$ of $G$, there is a colour $c \in L(v)$ such that $(G-v, L^{v\to c})$ is infeasible.
    If $v = v^*$, then let $c \in L_1(v_1)$ be a colour such that $(G_1-v_1, L_1^{v_1 \to c})$ is infeasible. Then $(G-v^*, L^{v^* \to c})$ is infeasible, as $(G_1-v_1, L_1^{v_1 \to c})$ is a connected component of $(G-v^*, L^{v^* \to c})$. 

    Assume $v \ne v^*$. Without loss of generality, assume $v \in V(G_1)$. Let $c \in L_1(v)$ be a colour such that $(G_1-v, L_1^{v \to c})$ is infeasible. Then $(G-v, L^{v \to c}) = (G_1-v, L_1^{v \to c}) \oplus_{(v_1,v_2)} (G_2, L_2)$. By induction hypothesis, 
    $(G-v, L^{v \to c})$ is infeasible.
\end{proof}

\section{Constructible infeasible pairs}

Assume $G$ and $G'$ are graphs, $L$ and $L'$ are list assignment of $G$ and $G'$, respectively. If $f:G \to G'$ is an isomorphism and $g: \bigcup_{v \in V(G)}L(v) \to \bigcup_{v \in V(G')}L'(v)$ is a bijection such that 
for each vertex $v$ of $G$, $L'(f(v)) = \{g(c) : c \in L(v)\}$, then we say $(G,L)$ and $(G',L')$ are isomorphic. We do not distinguish $(G,L)$ and $(G',L')$, unless otherwise specified explicitly.

\begin{definition}
    \label{def-w}
    Let $\mathcal{W}$ be the minimum set of pairs $(G,L)$ for which the following hold:
    \begin{enumerate}
        \item $(K_1, L_{\emptyset}) \in \mathcal{W}$, where $v \in V(K_1)$ and $L_{\emptyset}(v) = \emptyset$.    
        \item If $(G,L) \in \mathcal{W}$ and $v \in V(G)$, then $D_v(G,L) \in \mathcal{W}$.
        \item If $(G,L) \in \mathcal{W}$, $v \in V(G)$ and $d_G(v)=2$, then 
        $T_v(G,L) \in \mathcal{W}$.
        \item If for $i=1,2$, $(G_i, L_i) \in \mathcal{W}$, $v_i \in V(G_i)$ and $L_1(v_1) \cap L_2(v_2) = \emptyset$, then $(G_1, L_1) \oplus_{(v_1,v_2)} (G_2, L_2) \in \mathcal{W}$.
    \end{enumerate}
\end{definition} 

In other words, $\mathcal{W}$ is the smallest family of pairs $(G,L)$ that contains $(K_1, L_{\emptyset})$ and is closed under the three operations defined in the previous section. 
From the results in the previous section it follows that all pairs $(G,L) \in \mathcal{W}$ are infeasible. We call pairs $(G,L) \in \mathcal{W}$ {\em constructible infeasible pairs}. 

We shall show that the inverse is also true: all infeasible pairs are contained in $\mathcal{W}$. In this section, we prove some properties  of pairs contained in $\mathcal{W}$.

First, we present some examples of pairs $(G,L) \in \mathcal{W}$. 
We denote by $  \Theta_{n_1,n_2,\ldots,n_k}$ the   {\em generalized theta graph} consisting of
$k$ internally disjoint paths joining two vertices $u$ and $v$, where the $k$ paths are of lengths $n_1,n_2,\ldots, n_k$, respectively. The two vertices $u,v$ are called the {\em end vertices} of the generalized theta graph.

\begin{example}
    \label{example-1}
    The following are some examples of constructible infeasible pairs.  
    \begin{enumerate}
        \item Starting from $(K_1,L_{\emptyset})$, by duplicating $n-1$ times the vertex $v$, we obtain $(K_n,L_{K_n})$, where $L_{K_n}$ assigns to all vertices the same set of $n-1$ colours. 
        %Note that the list assignment $L_{K_n}$ is uniquely determined (up to a renaming of the colours).

\begin{figure}
 \centering 
\begin{tikzpicture}[scale=1.7, every node/.style={scale=0.8}, baseline]

% 图1: (K1, Lphi)
\draw (0,0) circle (1.5pt) node[below right, black] {\textcolor{blue}{$v$}};
\node[below] at (0,-0.4) {$(K_1, L_\phi)$};

% 箭头1
\draw[->] (0.6,0) -- (0.9,0)node[midway,below]{\scriptsize $D_{v,1}$};

% 图2: (K2, Lk2)
\draw (1.4,0)--(2,0);
\filldraw[fill=white, draw=black] (1.4,0) circle (1.5pt)node[below right, black] {\textcolor{blue}{$v$}};
\filldraw[fill=white, draw=blue] (2,0) circle (1.5pt);

\node[red] at (1.4,0.15) {\scriptsize 1};
\node[red] at (2,0.15) {\scriptsize 1};

\node[below] at (1.7,-0.4) {$(K_2, L_{K_2})$};

% 箭头2
\draw[->] (2.5,0) -- (2.8,0)node[midway,below]{\scriptsize $D_{v,2}$};

% 图3: (K3, Lk3)

\draw (3.3,0) -- (3.8,0.3);
\draw (3.3,0) -- (3.8,-0.3);
\draw (3.8,-0.3) -- (3.8,0.3);

\filldraw[fill=white,draw=black](3.3,0) circle (1.5pt) ;
\filldraw[fill=white,draw=blue](3.8,0.3) circle (1.5pt);
\filldraw[fill=white,draw=black](3.8,-0.3) circle (1.5pt);

\node at (3.3,0.15) {\scriptsize 1\red{2}};
\node at (3.95,0.3) {\scriptsize 1\red{2}};
\node at (3.95,-0.3) {\scriptsize 1\red{2}};
\node at (3.3,-0.15) {\textcolor{blue}{$v$}};

\node[below] at (3.6,-0.4) {$(K_3, L_{K_3})$};

% 箭头3
\draw[->] (4.4,0) -- (4.7,0)node[midway,below]{\scriptsize $D_{v,3}$};

% 图4: (K4, Lk4)

\draw (5.3,0) -- (5.75,0)--(6.1,0.4)--(6.1,-0.4)--(5.75,0);
\draw (6.1,0.4) -- (5.3,0)--(6.1,-0.4);

\filldraw[fill=white,draw=black](5.3,0) circle (1.5pt);
\filldraw[fill=white,draw=blue](5.75,0) circle (1.5pt);
\filldraw[fill=white,draw=black](6.1,0.4) circle (1.5pt);
\filldraw[fill=white,draw=black](6.1,-0.4) circle (1.5pt);

\node at (5.3,0.15) {\scriptsize 12\red{3}};
\node at (5.95,0) {\scriptsize 12\red{3}};
\node at (6.3,0.4) {\scriptsize 12\red{3}};
\node at (6.3,-0.4) {\scriptsize 12\red{3}};
\node at (5.3,-0.15) {\textcolor{blue}{$v$}};

\node[below] at (5.8,-0.4) {$(K_4, L_{K_4})$};

% 箭头5
\draw[->] (6.8,0) -- (7.1,0);

% 图5: (Kn, Lkn)
\node[blue] at (7.6,0) {$\cdots$};
\node[below] at (7.6,-0.4) {$(K_n, L_{K_n})$};

\end{tikzpicture}
   
\caption{Examples of constructible pairs:  duplicating operation.}
 \label{fig-1}
\end{figure}
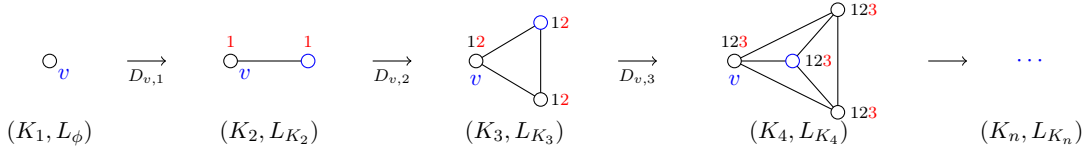

        \item Starting from $(K_3, L_{K_3})$, by repeatedly applying the tripling operation, we obtain $(C_{2n+1}, L_{C_{2n+1}})$, where $C_{2n+1}$ is an odd cycle, and $L_{2n+1}$ assigns to all vertices the same set of 2 colors. 
        %Again the list assignment $L_{C_{2n+1}}$ is uniquely determined.

   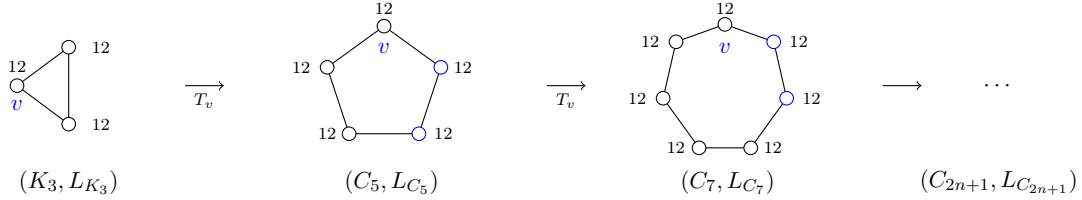
\begin{figure} 
   \centering
\begin{tikzpicture}[scale=1.7, every node/.style={scale=0.8}, baseline]

% 图1: (K3, L)

\draw (0,0) -- (0.4,0.3)--(0.4,-0.3)--(0,0);

\filldraw[fill=white,draw=black](0,0) circle (1.5pt) ;
\filldraw[fill=white,draw=black](0.4,0.3) circle (1.5pt);
\filldraw[fill=white,draw=black](0.4,-0.3) circle (1.5pt);

\node at (0,0.15) {\scriptsize 12};
\node at (0.65,0.3) {\scriptsize 12};
\node at (0.65,-0.3) {\scriptsize 12};
\node at (0,-0.15) {\textcolor{blue}{$v$}};

\node at (0.4,-0.75) {$(K_3, L_{K_3})$};

% 箭头1
\draw[->] (1.3,0) -- (1.6,0)node[midway,below]{\scriptsize $T_v$};

% 图2: (C5, LC5)

 \coordinate (P0) at (2.84, 0.464);
    \coordinate (P1) at (3.28, 0.143);
    \coordinate (P2) at (3.11, -0.375);
    \coordinate (P3) at (2.57, -0.375);
    \coordinate (P4) at (2.4, 0.143);

    % 五边形绘制
    \draw (P0) -- (P1) -- (P2) -- (P3) -- (P4) -- cycle;

   \filldraw[fill=white,draw=black](P0) circle (1.5pt) ;
\filldraw[fill=white,draw=black](P3) circle (1.5pt);
 \filldraw[fill=white,draw=black](P4) circle (1.5pt) ;
\filldraw[fill=white,draw=blue](P1) circle (1.5pt);
\filldraw[fill=white,draw=blue](P2) circle (1.5pt);

\node at (2.84, 0.6) {\scriptsize 12};
\node at (3.45, 0.143) {\scriptsize 12};
\node at (3.3, -0.375) {\scriptsize 12};
\node at (2.4, -0.375){\scriptsize 12};
\node at (2.2, 0.143) {\scriptsize 12};
\node[below] at  (2.84, 0.4) {\textcolor{blue}{$v$}};

\node at (2.9,-0.75) {$(C_5, L_{C_5})$};

\draw[->] (4.1,0) -- (4.4,0)node[midway,below]{\scriptsize $T_v$};

\coordinate (P0) at (5.478, 0.478);
    \coordinate (P1) at (5.1, 0.342);
    \coordinate (P2) at (5, -0.098);
    \coordinate (P3) at (5.278, -0.485);
    \coordinate (P4) at (5.678, -0.485);
    \coordinate (P5) at (5.956, -0.098);
    \coordinate (P6) at (5.856, 0.342);

    % 绘制七边形
    \draw (P0) -- (P1) -- (P2) -- (P3) -- (P4) -- (P5) -- (P6) -- cycle;

\filldraw[fill=white,draw=black](P0) circle (1.5pt) ;
\filldraw[fill=white,draw=black](P3) circle (1.5pt);
 \filldraw[fill=white,draw=black](P4) circle (1.5pt) ;
\filldraw[fill=white,draw=blue](P5) circle (1.5pt);
\filldraw[fill=white,draw=blue](P6) circle (1.5pt);
\filldraw[fill=white,draw=black](P2) circle (1.5pt);
 \filldraw[fill=white,draw=black](P1) circle (1.5pt) ;

\node at (5.478, 0.6) {\scriptsize 12};
\node at (4.9, 0.342) {\scriptsize 12};
\node at (4.8, -0.098) {\scriptsize 12};
\node at (5.1, -0.485) {\scriptsize 12};
\node at (5.85, -0.485) {\scriptsize 12};
\node at (6.15, -0.098){\scriptsize 12};
\node at (6.05, 0.342) {\scriptsize 12};

\node[below] at  (5.478, 0.4) {\textcolor{blue}{$v$}};

\node at (5.5,-0.75) {$(C_7, L_{C_7})$};

\draw[->] (6.7,0) -- (7,0);

\node at (7.6,0) {$\cdots$};
\node at (7.6,-0.75) {$(C_{2n+1}, L_{C_{2n+1}})$};

\end{tikzpicture}

\caption{Examples of constructible pairs: tripling operation.}
\label{fig-2}
\end{figure}

        \item Using copies of complete graphs and odd cycles, applying vertex sum operations, we obtain $(G,L)$, where $G$ is any Gallai-tree, and for each block $B$ of $G$ that is $r$-regular, $C_B$ is a set of $r$ colours, and  $L(v) = \bigcup_{  B \text{ is a block of $G$ containing $v$}   } C_B.$ 

   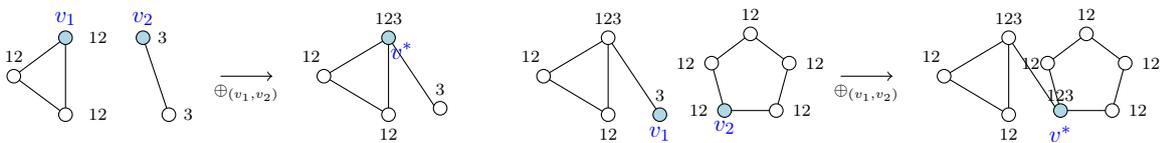
\begin{figure}
\begin{tikzpicture}[scale=1.7, every node/.style={scale=0.8}, baseline]

% 图1: (K3, L)

\draw (0,0) -- (0.4,0.3)--(0.4,-0.3)--(0,0);

\filldraw[fill=white,draw=black](0,0) circle (1.5pt) ;
\filldraw[fill=LightBlue,draw=black](0.4,0.3) circle (1.5pt);
\filldraw[fill=white,draw=black](0.4,-0.3) circle (1.5pt);

\node at (0,0.15) {\scriptsize 12};
\node at (0.65,0.3) {\scriptsize 12};
\node at (0.65,-0.3) {\scriptsize 12};
\node at (0.4,0.45) {\textcolor{blue}{$v_1$}};

\draw (1,0.3) -- (1.2,-0.3);

\filldraw[fill=LightBlue,draw=black](1,0.3) circle (1.5pt) ;
\filldraw[fill=white,draw=black](1.2,-0.3) circle (1.5pt);
\node at (1.15,0.3) {\scriptsize 3};
\node at (1.35,-0.3) {\scriptsize 3};
\node at (1,0.45) {\textcolor{blue}{$v_2$}};

% 箭头1
\draw[->] (1.6,0) -- (2,0)node[midway,below]{\scriptsize $\oplus_{(v_1,v_2)}$};

\draw (2.4,0) -- (2.9,-0.3)--(2.9,0.3)--(2.4,0);
\draw (2.9,0.3) -- (3.3,-0.3);

\filldraw[fill=white,draw=black](2.4,0) circle (1.5pt) ;
\filldraw[fill=white,draw=black](2.9,-0.3) circle (1.5pt);
\filldraw[fill=LightBlue,draw=black](2.9,0.3) circle (1.5pt);
\filldraw[fill=white,draw=black](3.3,-0.25) circle (1.5pt);

\node at (2.9,0.45) {\scriptsize 123};
\node at (2.9,-0.45) {\scriptsize 12};
\node at (2.3,0.15) {\scriptsize 12};
\node at (3,0.2) {\textcolor{blue}{$v^*$}};
\node at (3.3,-0.1) {\scriptsize 3};

%======图2

\draw (4.1,0) -- (4.6,-0.3)--(4.6,0.3)--(4.1,0);
\draw (4.6,0.3) -- (5,-0.3);

\filldraw[fill=white,draw=black](4.1,0) circle (1.5pt) ;
\filldraw[fill=white,draw=black](4.6,-0.3) circle (1.5pt);
\filldraw[fill=LightBlue,draw=black](5,-0.3) circle (1.5pt);
\filldraw[fill=white,draw=black](4.6,0.3) circle (1.5pt);

\node at (4.6,0.45) {\scriptsize 123};
\node at (4.6,-0.45) {\scriptsize 12};
\node at (4,0.15) {\scriptsize 12};
\node at (5,-0.45) {\textcolor{blue}{$v_1$}};
\node at (5,-0.15) {\scriptsize 3};

\coordinate (P0) at (5.7, 0.3299);
\coordinate (P1) at (5.4, 0.1019);
\coordinate (P2) at (5.5, -0.2678);
\coordinate (P3) at (5.9, -0.2678);
\coordinate (P4) at (6, 0.1019);

    % 五边形绘制
    \draw (P0) -- (P1) -- (P2) -- (P3) -- (P4) -- cycle;

   \filldraw[fill=white,draw=black](P0) circle (1.5pt) ;
\filldraw[fill=white,draw=black](P4) circle (1.5pt);
 \filldraw[fill=LightBlue,draw=black](P2) circle (1.5pt) ;
\filldraw[fill=white,draw=black](P1) circle (1.5pt);
\filldraw[fill=white,draw=black](P3) circle (1.5pt);

\node at (5.7, 0.48) {\scriptsize 12};
\node at (5.2, 0.1019) {\scriptsize 12};
\node at (5.3, -0.2678) {\scriptsize 12};
\node at (6.1, -0.2678){\scriptsize 12};
\node at (6.2, 0.1019) {\scriptsize 12};
\node at  (5.5, -0.4) {\textcolor{blue}{$v_2$}};

% 箭头2
\draw[->] (6.4,0) -- (6.8,0)node[midway,below]{\scriptsize $\oplus_{(v_1,v_2)}$};

\draw (7.2,0) -- (7.7,-0.3)--(7.7,0.3)--(7.2,0);
\draw (7.7,0.3) -- (8.1,-0.3);

\filldraw[fill=white,draw=black](7.2,0) circle (1.5pt) ;
\filldraw[fill=white,draw=black](7.7,-0.3) circle (1.5pt);
\filldraw[fill=white,draw=black](7.7,0.3) circle (1.5pt);

\node at (7.7,0.45) {\scriptsize 123};
\node at (7.7,-0.45) {\scriptsize 12};
\node at (7.1,0.15) {\scriptsize 12};
\node at (8.1,-0.45) {\textcolor{blue}{$v^*$}};
\node at (8.1,-0.15) {\scriptsize 123};

\coordinate (P0) at (8.3, 0.3299);
\coordinate (P1) at (8, 0.1019);
\coordinate (P2) at (8.1, -0.2678);
\coordinate (P3) at (8.5, -0.2678);
\coordinate (P4) at (8.6, 0.1019);

    % 五边形绘制
    \draw (P0) -- (P1) -- (P2) -- (P3) -- (P4) -- cycle;

   \filldraw[fill=white,draw=black](P0) circle (1.5pt) ;
\filldraw[fill=white,draw=black](P4) circle (1.5pt);
 \filldraw[fill=LightBlue,draw=black](P2) circle (1.5pt) ;
\filldraw[fill=white,draw=black](P1) circle (1.5pt);
\filldraw[fill=white,draw=black](P3) circle (1.5pt);

\node at (8.3, 0.48) {\scriptsize 12};
\node at (7.87, 0.1019) {\scriptsize 12};
%\node at (5.3, -0.2678) {\scriptsize 12};
\node at (8.7, -0.2678){\scriptsize 12};
\node at (8.8, 0.1019) {\scriptsize 12};

\end{tikzpicture}

\caption{Examples of constructible pairs: vertex-sum operation.}
\label{fig-3}
\end{figure}

        \item Assume $K_{1,n}$ have vertices $v, u_1,u_2,\ldots, u_n$, where $v$ is the center, and $u_i$ are the leaf vertices. Starting from $(K_{1,n}, L_{K_{1,n}})$, Duplicating the center $v$, we  obtain $(\Theta_{1,2,\ldots, 2}, L_{\Theta_{1,2,\ldots, 2}})$. Then by 
        applying a sequence of tripling operation to the degree 2 vertices of $\Theta_{1,2,\ldots, 2}$, we obtain $(\Theta_{1, 2k_1, \ldots, 2k_n}, L_{\Theta_{1,2k_1,\ldots, 2k_n}})$.

   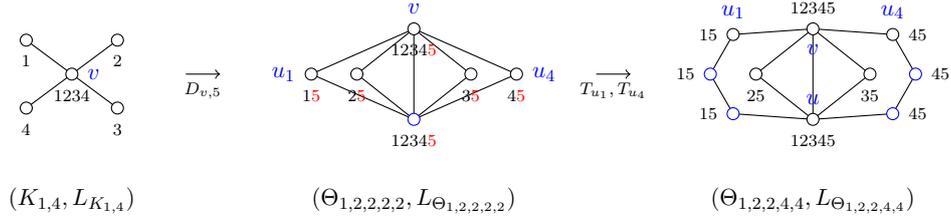
\begin{figure}
       \centering  

\begin{tikzpicture}[scale=1.5, every node/.style={scale=0.8}, baseline]

% 图1: (K1,4, L)

\coordinate (P0) at (0.4, 0);
    \coordinate (P1) at (0, 0.3);
    \coordinate (P2) at (0.8,0.3);
    \coordinate (P3) at (0.8, -0.3);
    \coordinate (P4) at (0, -0.3);

    \coordinate (P5) at (2.5, 0);
    \coordinate (P6) at (2.9, 0);
    \coordinate (P7) at (3.4, 0.4);
    \coordinate (P8) at (3.4, -0.4);
    \coordinate (P9) at (3.9, 0);
     \coordinate (P10) at (4.3, 0);

  \draw (P1) -- (P0) -- (P3);
  \draw(P4) -- (P0) -- (P2);

\draw (P5)--(P7)--(P6)--(P8)--(P5);
\draw (P7)--(P9)--(P8)--(P10)--(P7)--(P8);

% 箭头1
\draw[->] (1.4,0) -- (1.7,0)node[midway,below]{\scriptsize $D_{v,5}$};

% 使用 foreach 循环生成点
\foreach \i in {0, 1, 2, 3, 4, 5, 6, 7, 9, 10} {
    \filldraw[fill=white,draw=black] (P\i) circle (1.5pt);}

% 单独处理 P7
\filldraw[fill=white,draw=blue] (P8) circle (1.5pt);

\foreach \i in {1, 2, 3, 4} {
    \node[below=1mm] at (P\i) {\scriptsize \i};}

\node[below=1mm] at (P0) {\scriptsize 1234};
\node[right=1mm] at (P0) {\bl{$v$}};

\foreach \i in {7, 8} {
    \node[below=1mm] at (P\i) {\scriptsize 1234\red{5}};}
\node[above=1mm] at (P7) {\bl{$v$}};
\node[below=1mm] at (P5) {\scriptsize 1\red{5}};
\node[below=1mm] at (P6) {\scriptsize 2\red{5}};
\node[below=1mm] at (P9) {\scriptsize 3\red{5}};
\node[below=1mm] at (P10) {\scriptsize 4\red{5}};
\node[left=1mm] at (P5) {\bl{$u_1$}};
\node[right=1mm] at (P10) {\bl{$u_4$}};
% 箭头1
\draw[->] (5,0) -- (5.3,0)node[midway,below]{\scriptsize $T_{u_1},T_{u_4}$};

\coordinate (V0) at (6.2, 0.35);
    \coordinate (V1) at (6, 0);
    \coordinate (V2) at (6.2,-0.35);
    \coordinate (V3) at (6.4, 0);
    \coordinate (V4) at (6.9, 0.4);

    \coordinate (V5) at (6.9,-0.4);
    \coordinate (V6) at (7.4, 0);
    \coordinate (V7) at (7.6, 0.35);
    \coordinate (V8) at (7.8, 0);
    \coordinate (V9) at (7.6, -0.35);

  \draw (V0) -- (V1) -- (V2)--(V5)--(V3)--(V4)--(V6)--(V5)--(V9)--(V8)--(V7)--(V4)--(V5);
 \draw (V0) -- (V4);

% 使用 foreach 循环生成点
\foreach \i in {0, 3, 4, 5, 6, 7} {
    \filldraw[fill=white,draw=black] (V\i) circle (1.5pt);}

\foreach \i in {1, 2, 8, 9} {
    \filldraw[fill=white,draw=blue] (V\i) circle (1.5pt);}

\foreach \i in {0, 1, 2} {
    \node[left=1mm] at (V\i) {\scriptsize 15};}
    
\foreach \i in {7, 8, 9} {
    \node[right=1mm] at (V\i) {\scriptsize 45};}
    \node[below=1mm] at (V5) {\scriptsize 12345};
    \node[above=1mm] at (V4) {\scriptsize 12345};
 \node[below=1mm] at (V3) {\scriptsize 25};
  \node[below=1mm] at (V6) {\scriptsize 35};
    
\node[below=1mm] at (V4) {\bl{$v$}};
\node[above=1mm] at (V5) {\bl{$u$}};
\node[above=1mm] at (V0) {\bl{$u_1$}};
\node[above=1mm] at (V7) {\bl{$u_4$}};

\node[below=1.4cm] at (P0) {$(K_{1,4}, L_{K_{1,4}})$};
\node[below=0.8cm] at (P8) {$(\Theta_{1,2,2,2,2}, L_{\Theta_{1,2,2,2,2}})$};
\node[below=0.8cm] at (V5) {$(\Theta_{1,2,2,4,4}, L_{\Theta_{1,2,2,4,4}})$};
\end{tikzpicture}
\caption{Construction of theta graphs}
\label{fig-4}
\end{figure}

        \item Starting with $(\Theta_{1,4,4},L_{\Theta_{1,4,4}})$, and duplicating the   vertices $x_2$ and $y_2$, we obtain the pair $(G,L) \in \mathcal{W}$ as illustrated in Fig. \ref{fig-5}.

\begin{figure}
    \centering
\begin{tikzpicture}[scale=1.5, every node/.style={scale=0.8}, baseline]

% 图1: (K1,4, L)

\coordinate (P0) at (0, 0.4);
    \coordinate (P1) at (0, 0);
    \coordinate (P2) at (0,-0.4);
    \coordinate (P3) at (0.6, 0.4);
    \coordinate (P4) at (0.6, -0.4);

    \coordinate (P5) at (1.2, 0.4);
    \coordinate (P6) at (1.2, 0);
    \coordinate (P7) at (1.2, -0.4);

  \draw (P0) -- (P1) -- (P2) -- (P4) -- (P3) -- (P0);
  \draw(P3) -- (P5) -- (P6) -- (P7) -- (P4);

% 使用 foreach 循环生成点
\foreach \i in {0, 1, 2, 3, 4, 5, 6, 7} {
    \filldraw[fill=white,draw=black] (P\i) circle (1.5pt);}

\foreach \i in {0, 1, 2} {
    \node[left=1mm] at (P\i) {\scriptsize 13};}
  \foreach \i in {5, 6, 7} {
    \node[right=1mm] at (P\i) {\scriptsize 23};}  
  \foreach \i in {3, 4} {
    \node[below=1mm] at (P\i) {\scriptsize 123};}  

    \node[right=1mm] at (P1){\bl{$x_2$}};
     \node[left=1mm] at (P6){\bl{$y_2$}};

% 箭头1
\draw[->] (2.3,0) -- (2.7,0)node[midway,below]{\scriptsize $D_{x_2,2},D_{y_2,1}$};

\coordinate (V0) at (4.1, 0.4);
    \coordinate (V1) at (3.8, 0);
    \coordinate (V2) at (4.4,0);
    \coordinate (V3) at (4.1,-0.4);
    \coordinate (V4) at (4.7, 0.4);
    \coordinate (V5) at (4.7,-0.4);
    \coordinate (V6) at (5.3, 0.4);
    \coordinate (V7) at (5, 0);
    \coordinate (V8) at (5.6, 0);
    \coordinate (V9) at (5.3, -0.4);

  \draw (V0) -- (V1) -- (V2)--(V0)--(V4)--(V5)--(V3)--(V1);
  
  \draw(V4)--(V6)--(V8)--(V7)--(V6);
 \draw (V3) -- (V2);
 \draw (V7) -- (V9)--(V8);
  \draw (V5) -- (V9);

% 使用 foreach 循环生成点
\foreach \i in {0, 1, 3, 4, 5, 6, 8, 9} {
    \filldraw[fill=white,draw=black] (V\i) circle (1.5pt);}

\foreach \i in {2, 7} {
    \filldraw[fill=white,draw=blue] (V\i) circle (1.5pt);}

\node[above=1mm] at (V0) {\scriptsize 13\red{2}};
\node[left=1mm] at (V1) {\scriptsize 13\red{2}};
\node[below] at (V2) {\scriptsize 13\red{2}};
\node[below=1mm] at (V3) {\scriptsize 13\red{2}};
    
  \foreach \i in {4, 5} {
    \node[below=1mm] at (V\i) {\scriptsize 123};}  

\node[above=1mm] at (V6) {\scriptsize 23\red{1}};
\node[below] at (V7) {\scriptsize 23\red{1}};
\node[right=1mm] at (V8) {\scriptsize 23\red{1}};
\node[below=1mm] at (V9) {\scriptsize 23\red{1}};

    \node[below] at (V1){\bl{$x_2$}};
     \node[below] at (V8){\bl{$y_2$}};

\end{tikzpicture}
 \caption{Construction of a cubic IC-Brooks graph}
    \label{fig-5}
\end{figure}
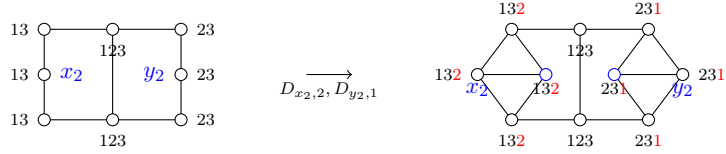

        \item Starting from $(C_{6n+3}, L_{C_{6n+3}})$, by repeatedly duplicating each of the vertices of $C_{6n+3}$ into a clique of size $m$, we obtain an infeasible pair  $(G,L) \in \mathcal{W}$  as illustrated in Fig. \ref{fig-6}.

\begin{figure}
    \centering

\begin{tikzpicture}[scale=1.5, every node/.style={scale=0.8}, baseline] % 调整 scale 以控制图大小
    % --- 图 1: 只有外圆 ---
    \begin{scope}[xshift=0cm] % 向左平移图 1
        % 圆的半径
        \def\radius{0.8}
        % 顶点总数
        \def\numVertices{9}
        
        % 画圆
        \draw (0, 0) circle (\radius);
        
        % 绘制顶点和标签
        \foreach \i in {1,...,\numVertices} {
            % 计算角度
            \pgfmathsetmacro{\angle}{360/\numVertices * (\i - 1)}
            % 计算顶点坐标
            \pgfmathsetmacro{\x}{\radius * cos(\angle)}
            \pgfmathsetmacro{\y}{\radius * sin(\angle)}
            % 画顶点
            \filldraw[fill=white,draw=black] (\x, \y) circle (1.2pt);
            % 添加顶点标签
            \node at ({1.2*\x}, {1.2*\y}) {\scriptsize 12};
            \ifnum\i=3
             \node at ({0.8*\x}, {0.8*\y}) {\bl{$v$}};
             \else
             \node[white] at ({0.8*\x}, {0.8*\y}) {$v$};
             \fi 
        }
    \end{scope}

% 箭头1
\draw[->] (1.7,0) -- (2,0)node[midway,below]{\scriptsize $D_{v,3}$};
% 箭头1
\draw[->] (5.4,0) -- (5.7,0)node[midway,below]{\scriptsize $...$};

    % --- 图 2: 外圆加一个顶点 ---
    \begin{scope}[xshift=3.7cm] % 向右平移图 2
        % 圆的半径
        \def\outerRadius{0.8} % 外圆半径
        \def\innerRadius{0.5} % 内圆半径
        % 顶点总数
        \def\numVertices{9}
        
        % 画外圆
        \draw (0, 0) circle (\outerRadius);
        % 画内圆
        \draw[white] (0, 0) circle (\innerRadius);
        
        % 绘制连接的曲线与边
        
        \foreach \i in {1,...,\numVertices} {
            % 计算角度
            \pgfmathsetmacro{\angle}{360/\numVertices * (\i - 1)}
            % 计算外圆顶点坐标
            \pgfmathsetmacro{\outerX}{\outerRadius * cos(\angle)}
            \pgfmathsetmacro{\outerY}{\outerRadius * sin(\angle)}
            % 计算内圆顶点坐标
            \pgfmathsetmacro{\innerX}{\innerRadius * cos(\angle)}
            \pgfmathsetmacro{\innerY}{\innerRadius * sin(\angle)}

            % 连边（内圆与对应外圆顶点）
            \ifnum\i=3  % 指定顶点 3 作为改变颜色的例子
        \draw (\innerX, \innerY) -- (\outerX, \outerY); % 红色边
    \else
        \draw[white] (\innerX, \innerY) -- (\outerX, \outerY); % 其他保持默认颜色
    \fi

            % 连边（内圆顶点与外圆邻居顶点）
            \pgfmathsetmacro{\prevAngle}{360/\numVertices * (\i - 2)} % 前一个顶点角度
            \pgfmathsetmacro{\nextAngle}{360/\numVertices * (\i)}     % 后一个顶点角度
            % 计算邻居顶点坐标
            \pgfmathsetmacro{\prevX}{\outerRadius * cos(\prevAngle)}
            \pgfmathsetmacro{\prevY}{\outerRadius * sin(\prevAngle)}
            \pgfmathsetmacro{\nextX}{\outerRadius * cos(\nextAngle)}
            \pgfmathsetmacro{\nextY}{\outerRadius * sin(\nextAngle)}

            % 定义曲线控制点参数
            \pgfmathsetmacro{\controlFactor}{0.55}

            % 连边（内圆到外圆的后一个邻居） - 曲线
    \ifnum\i=3  % 指定顶点 5 的连接作为改变颜色的例子
        \pgfmathsetmacro{\controlX}{\controlFactor * (\innerX + \nextX)}
        \pgfmathsetmacro{\controlY}{\controlFactor * (\innerY + \nextY)}
        \draw (\innerX, \innerY) .. controls (\controlX, \controlY) .. (\nextX, \nextY); % 蓝色曲线
    \else
        \pgfmathsetmacro{\controlX}{\controlFactor * (\innerX + \nextX)}
        \pgfmathsetmacro{\controlY}{\controlFactor * (\innerY + \nextY)}
        \draw[white] (\innerX, \innerY) .. controls (\controlX, \controlY) .. (\nextX, \nextY); % 默认颜色曲线
    \fi

            % 连边（内圆到外圆的前一个邻居） - 曲线
        \ifnum\i=3    \pgfmathsetmacro{\controlX}{\controlFactor * (\innerX + \prevX)}
            \pgfmathsetmacro{\controlY}{\controlFactor * (\innerY + \prevY)}
            
            \draw (\innerX, \innerY) .. controls (\controlX, \controlY) .. (\prevX, \prevY);
\else
        \pgfmathsetmacro{\controlX}{\controlFactor * (\innerX + \prevX)}
            \pgfmathsetmacro{\controlY}{\controlFactor * (\innerY + \prevY)}
        \draw[white] (\innerX, \innerY) .. controls (\controlX, \controlY) .. (\prevX, \prevY); % 默认颜色曲线
    \fi
            
        }

   % 在顶层绘制顶点
\begin{pgfonlayer}{foreground}
\foreach \i in {1,...,\numVertices} {
    % 计算角度
    \pgfmathsetmacro{\angle}{360/\numVertices * (\i - 1)}
    % 外圆顶点坐标
    \pgfmathsetmacro{\outerX}{\outerRadius * cos(\angle)}
    \pgfmathsetmacro{\outerY}{\outerRadius * sin(\angle)}
    % 内圆顶点坐标
    \pgfmathsetmacro{\innerX}{\innerRadius * cos(\angle)}
    \pgfmathsetmacro{\innerY}{\innerRadius * sin(\angle)}

    % 绘制内圆顶点
    \ifnum\i=3
        \filldraw[fill=white, draw=black] (\innerX, \innerY) circle (1.2pt);
         \node at ({0.8 * \innerX}, {0.8 * \innerY}) {\scriptsize 12\red{3}}; % 外圆顶点标签
    \else
        \filldraw[fill=white, draw=white] (\innerX, \innerY) circle (1.2pt); % 内圆顶点
    \fi
    
    % 绘制外圆顶点标签
 \ifnum\i=3 
        \node at ({1.2 * \outerX}, {1.2 * \outerY}) {\scriptsize 12\red{3}}; % 外圆顶点标签
    \else
    \ifnum\i=4
        \node at ({1.2 * \outerX}, {1.2 * \outerY}) {\scriptsize 12\red{3}}; % 外圆顶点标签
          \else
        \ifnum\i=2
        \node at ({1.2 * \outerX}, {1.2 * \outerY}) {\scriptsize 12\red{3}}; % 外圆顶点标签
        \else
        \node at ({1.2 * \outerX}, {1.2 * \outerY}) {\scriptsize 12};
    \fi
    \fi
    \fi 

    % 绘制外圆顶点
    \filldraw[fill=white, draw=black] (\outerX, \outerY) circle (1.2pt); % 外圆顶点
}
\end{pgfonlayer}

    \end{scope}

    % --- 图 2: 外圆和内圆的连接 ---
    \begin{scope}[xshift=7.4cm] % 向右平移图 2
        % 圆的半径
        \def\outerRadius{0.9} % 外圆半径
        \def\innerRadius{0.6} % 内圆半径
        % 顶点总数
        \def\numVertices{9}
        
        % 画外圆
        \draw (0, 0) circle (\outerRadius);
        % 画内圆
        \draw (0, 0) circle (\innerRadius);
        
        % 绘制连接的曲线与边
        \foreach \i in {1,...,\numVertices} {
            % 计算角度
            \pgfmathsetmacro{\angle}{360/\numVertices * (\i - 1)}
            % 计算外圆顶点坐标
            \pgfmathsetmacro{\outerX}{\outerRadius * cos(\angle)}
            \pgfmathsetmacro{\outerY}{\outerRadius * sin(\angle)}
            % 计算内圆顶点坐标
            \pgfmathsetmacro{\innerX}{\innerRadius * cos(\angle)}
            \pgfmathsetmacro{\innerY}{\innerRadius * sin(\angle)}

            % 连边（内圆与对应外圆顶点）
            \draw (\innerX, \innerY) -- (\outerX, \outerY);
            
            % 连边（内圆顶点与外圆邻居顶点）
            \pgfmathsetmacro{\prevAngle}{360/\numVertices * (\i - 2)} % 前一个顶点角度
            \pgfmathsetmacro{\nextAngle}{360/\numVertices * (\i)}     % 后一个顶点角度
            % 计算邻居顶点坐标
            \pgfmathsetmacro{\prevX}{\outerRadius * cos(\prevAngle)}
            \pgfmathsetmacro{\prevY}{\outerRadius * sin(\prevAngle)}
            \pgfmathsetmacro{\nextX}{\outerRadius * cos(\nextAngle)}
            \pgfmathsetmacro{\nextY}{\outerRadius * sin(\nextAngle)}

            % 定义曲线控制点参数
            \pgfmathsetmacro{\controlFactor}{0.55}

            % 连边（内圆到外圆的后一个邻居） - 曲线
            \pgfmathsetmacro{\controlX}{\controlFactor * (\innerX + \nextX)}
            \pgfmathsetmacro{\controlY}{\controlFactor * (\innerY + \nextY)}
            \draw (\innerX, \innerY) .. controls (\controlX, \controlY) .. (\nextX, \nextY);

            % 连边（内圆到外圆的前一个邻居） - 曲线
            \pgfmathsetmacro{\controlX}{\controlFactor * (\innerX + \prevX)}
            \pgfmathsetmacro{\controlY}{\controlFactor * (\innerY + \prevY)}
            \draw (\innerX, \innerY) .. controls (\controlX, \controlY) .. (\prevX, \prevY);
        }

        % 在顶层绘制顶点
        \begin{pgfonlayer}{foreground}
        \foreach \i in {1,...,\numVertices} {
            % 计算角度
            \pgfmathsetmacro{\angle}{360/\numVertices * (\i - 1)}
            % 外圆顶点坐标
            \pgfmathsetmacro{\outerX}{\outerRadius * cos(\angle)}
            \pgfmathsetmacro{\outerY}{\outerRadius * sin(\angle)}
            % 内圆顶点坐标
            \pgfmathsetmacro{\innerX}{\innerRadius * cos(\angle)}
            \pgfmathsetmacro{\innerY}{\innerRadius * sin(\angle)}

            % 绘制顶点
            \filldraw[fill=white,draw=black] (\innerX, \innerY) circle (1.2pt); % 内圆顶点
            \filldraw[fill=white,draw=black] (\outerX, \outerY) circle (1.2pt); % 外圆顶点
        }
        \end{pgfonlayer}
    \end{scope}
    
\end{tikzpicture}
 
    \caption{Example of constructible pairs: blow-up of an odd cycle}
    \label{fig-6}
\end{figure}
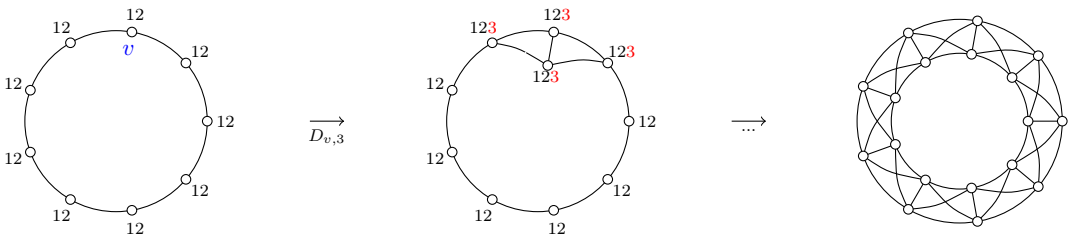

    \end{enumerate}
    
\end{example}

It was conjectured    in \cite{KKKO} that if $k \ge 3$ and $G$ is a $k$-regular graph which does not contain $K_{k+1}^-$ as an induced subgraph, then $\chi_i(G) \le k$. The conjecture was verified for $k=3$. However, the graph $G$ constructed in (6) of Example \ref{example-1} disproves this conjecture for $k \ge 4$. The graph $G$ in that example is $k$-regular for $k=3m-1$. By choosing appropriate colours in the duplication process, the list $L$ can be chosen so that $L(v)$ be the same set of $k$ colours. So, the graph $G$ is not indicated $k$-colourable, and hence $\chi_i(G) = 3m=k+1$. Note that the clique size of $G$ is $2m < k$, provided that $m \ge 2$.

In \cite{Grzesik} it was proved that there are graphs $G$ of an arbitrarily large chromatic number for which $\chi_i(G) = \frac 43 \chi(G)$. The graphs $G$ in (6) of Example \ref{example-1}, with $m=3n+1$, have $\chi(G) =  2m+1$ and $\chi_i(G) = 3m$.
So there are graphs $G$ of an arbitrary large chromatic number for which $\chi_i(G) \approx  \frac 32 \chi(G)$.
In \cite{Grzesik} it was conjectured that $\chi_i(G) \le 2 \chi(G)$ for every graph $G$. This conjecture remains open.

\begin{definition}
    \label{def-twintriple}
    Assume $G$ is a graph and $L$ is a degree-list assignment of $G$. 
    \begin{itemize}
        \item If $u,v$ are two vertices of $G$ with $N_G[u] = N_G[v]$ and $L(u)=L(v)$, then we say $\{u,v\}$ is a pair of adjacent twins of $(G,L)$.
        \item If $[v_1,v_2,v_3]$ is a path in $G$ with $d_G(v_1)=d_G(v_2)=d_G(v_3)=2$ and $L(v_1)=L(v_2)=L(v_3)$, then we say $(v_1, v_2, v_3)$ is a {\em triple of 2-vertices} in $(G,L)$.
        \end{itemize}  
\end{definition}

\begin{definition}
    \label{def-rootedcycle}
    Assume $G$ is a graph,   $C_1, C_2, \ldots, C_k$ are odd cycles of length at least 5 that with a common edge $uv$, $d_G(w) = 2$ for $w \in \cup_{i=1}^k V(C_i) - \{u,v\}$, and $N_G(u) - \cup_{i=1}^k V(C_i) = N_G(v) - \cup_{i=1}^kV(C_i)$. Then we say $u,v$ is a pair of adjacent pseudo-twins of index $k$. 
    We say $u,v$ is a pair of adjacent pseudo-twins if they are adjacent pseudo-twins of index $k$ for some $k \ge 0$.
\end{definition}

For example, the vertices $u, v$ of the graph $(\Theta_{1,2,2,4,4}, L_{\Theta_{1,2,2,4,4}})$ in Fig. \ref{fig-4} are adjacent pseudo-twins of index $2$. Note that   $u,v$ is  a pair of adjacent pseudo-twins on index $0$ means that   $u,v$ is a pair of adjacent twins.

\begin{lem}
    \label{lem-p1}
    If $(G,L) \in \mathcal{W}$ and $G$ is 2-connected, then $G$ contains a pair of adjacent pseudo-twins. 
\end{lem}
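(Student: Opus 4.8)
I would induct on the number $m$ of operations used to construct $(G,L)$ from $(K_1,L_\emptyset)$ according to Definition \ref{def-w}, and split according to the last operation. When $m=0$ we have $G=K_1$, which is not $2$-connected, so there is nothing to prove; more generally a graph on at most two vertices is either not $2$-connected or is $K_2$, whose two vertices form a pair of adjacent twins, i.e.\ of adjacent pseudo-twins of index $0$. So assume $m\ge 1$ and $|V(G)|\ge 3$.

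If the last operation is a vertex sum $(G_1,L_1)\oplus_{(v_1,v_2)}(G_2,L_2)$, then the identified vertex $v^*$ is a cut vertex of $G$ unless one of $G_1,G_2$ is a single vertex; since $(K_1,L_\emptyset)$ is the only pair in $\mathcal{W}$ on one vertex, in that case $(G,L)$ is (isomorphic to) the other summand, which has a shorter construction, and we are done by induction. If the last operation is a duplication $D_{v,c}$, a direct check of the definition of $D_{v,c}$ shows that the new vertex $v'$ satisfies $N_G[v']=N_G[v]$ and $L(v')=L(v)$, so $\{v,v'\}$ is a pair of adjacent twins. (Neither of these two cases genuinely uses $2$-connectedness.)

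The remaining case is a tripling $(G,L)=T_v(\tilde G,\tilde L)$ with $d_{\tilde G}(v)=2$, $N_{\tilde G}(v)=\{a,b\}$, $a\ne b$, where $v$ is replaced in $G$ by a path $[v_1,v_2,v_3]$ of degree-$2$ vertices with $v_1\sim a$, $v_3\sim b$. I would first note that $\tilde G$ is again $2$-connected: $G$ is obtained from $\tilde G$ by subdividing each of the edges $va,vb$ once, and conversely $\tilde G$ is obtained from $G$ by successively suppressing the degree-$2$ vertices $v_1$ and $v_3$, which preserves $2$-connectedness since $|V(G)|\ge 5$ and no multi-edge is created. By the induction hypothesis, $(\tilde G,\tilde L)$ has a pair $u,w$ of adjacent pseudo-twins of some index $k$, witnessed by odd cycles $C_1,\dots,C_k$ of length $\ge 5$ through the edge $uw$ whose internal vertices have degree $2$ (so the $C_i$ pairwise meet only in the edge $uw$). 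I then distinguish where the tripled vertex $v$ lies. If $v\in\{u,w\}$, the degree count $d_{\tilde G}(u)=1+k+|N_{\tilde G}(u)\setminus\bigcup_iV(C_i)|$, together with the degree-$2$ conditions and $2$-connectedness, forces $\tilde G$ to be $K_3$ (if $k=0$) or a single odd cycle (if $k\ge 1$); in either case $G$ is an odd cycle of length $\ge 5$, which trivially contains adjacent pseudo-twins of index $1$ (take any edge together with the whole cycle as $C_1$). If $v\notin\{u,w\}$ is an internal vertex of some $C_j$, then replacing $v$ by $[v_1,v_2,v_3]$ lengthens $C_j$ by $2$, keeping it an odd cycle of length $\ge 5$ through $uw$, and $u,w$ remain adjacent pseudo-twins of index $k$ in $G$. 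If $v\notin\{u,w\}\cup\bigcup_iV(C_i)$ and $v$ is adjacent to $u$, then the pseudo-twin condition forces $v$ adjacent to $w$ as well, hence $N_{\tilde G}(v)=\{u,w\}$, and in $G$ the path $u\,v_1\,v_2\,v_3\,w$ together with the edge $uw$ forms a new $5$-cycle $C_{k+1}$, so $u,w$ become adjacent pseudo-twins of index $k+1$. In the remaining position, $v$ is neither equal nor adjacent to $u$ or $w$ and lies on no $C_i$, so nothing near the witnessing structure changes and $u,w$ stay adjacent pseudo-twins of index $k$ in $G$.

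The part that will take the most care is verifying, in the tripling sub-cases, that the defining equality $N_G(u)\setminus\bigcup_iV(C_i)=N_G(w)\setminus\bigcup_iV(C_i)$ (which for index $0$ reads $N_G[u]=N_G[w]$) is inherited from its analogue in $\tilde G$. The key observation is that whenever a neighbour of $u$ (or of $w$) is rerouted through the inserted path, the rerouted neighbour $v_1$ (resp.\ $v_3$) lands inside the new or lengthened cycle, while the three inserted vertices are new and hence lie outside $N_{\tilde G}(u)\cup N_{\tilde G}(w)$; thus after taking the set difference with the union of the updated cycle-vertex sets, the two sides collapse to $N_{\tilde G}(u)\setminus\bigcup_iV(C_i)$ and $N_{\tilde G}(w)\setminus\bigcup_iV(C_i)$, which agree. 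A secondary point that needs attention is the structural claim used when $v\in\{u,w\}$, namely that a $2$-connected member of $\mathcal{W}$ in which a pseudo-twin vertex has degree $2$ must be $K_3$ or an odd cycle; this is where $2$-connectedness is genuinely invoked, to rule out a clique pendant at the common neighbour.
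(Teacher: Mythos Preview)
Your proposal is correct and follows essentially the same approach as the paper: both arguments track the pair of twins created by a duplication and verify that subsequent tripling operations preserve the pseudo-twin property (possibly increasing the index). Your induction on the last operation is a more explicit and carefully case-split version of the paper's sketch, and in particular you handle the sub-case $v\in\{u,w\}$ (tripling at a pseudo-twin vertex, forcing $\tilde G$ to be $K_3$ or an odd cycle) which the paper's proof glosses over.
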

\begin{proof}
    Since $G$ is 2-connected and $|V(G)| \ge 3$,  no vertex-sum operation is applied after the last duplication operation. 
Assume the last duplication operation creates a pair of adjacent twins $\{u,v\}$. All the later operations applied in the process of constructing $(G,L)$ (if any) are tripling operations. If $\{u,v\}$ is a pair of adjacent pseudo-twins in $(G,L)$, then after any tripling operation,  $\{u,v\}$ remains  a pair of adjacent pseudo-twins. If $u,v$ have a common neighbour $w$ which has degree $2$, then the operation $T_w(G,L)$ increases by 1 the index of the pseudo-twins.
\end{proof}

\section{Some useful indicated degree-choosable  }

We shall prove in the next section that if $(G,L)$ is infeasible, then $(G,L) \in \mathcal{W}$.
Assume this is not true. Let $(G,L)$ be a counterexample with $|V(G)|$ minimum.
By Lemma \ref{lem-remaining}, for any subset $X$ of $V(G)$, there is an $L$-colouring $\phi$ of $G-X$ so that $(G[X], L^{\phi})$ is infeasible. As $L^{\phi}$ is a degree-list assignment of $G[X]$, $G[X]$ is not indicated degree-choosable. I.e., any induced subgraph of $G$ is not indicated degree-choosable. In this section, we prove  some particular graphs  are indicated degree-choosable, and hence cannot be an induced subgraph of $G$.

%\begin{obs}
%    If a graph $G$ is not indicated degree-choosable, then $G$ contains no induced subgraph which is indicated degree-choosable. In particular, any non indicated degree-choosable graph contains no induced even cycle. 
%\end{obs}

\begin{lem}
     \label{lem-cycle}
     Assume $G $ is a cycle and $L$ is a degree-list assignment of $G$. Then $(G,L)$ is infeasible if and only if $G$ is an odd cycle and all vertices $v$ of $G$ have the same list. In particular, even cycles are indicated degree-choosable.
 \end{lem}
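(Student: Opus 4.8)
The statement has two directions. For the ``if'' direction, suppose $G=C_{2n+1}$ is an odd cycle and $L$ assigns the same $2$-element list $\{1,2\}$ to every vertex; I would simply observe that $(C_3, L_{C_3})\in\mathcal{W}$ (obtained from $(K_1,L_\emptyset)$ by two duplications, as in Example~\ref{example-1}(1)) and that repeatedly applying the tripling operation $T_v$ to a degree-$2$ vertex turns $C_{2k+1}$ into $C_{2k+3}$ with the constant list preserved, as in Example~\ref{example-1}(2). Since every pair in $\mathcal{W}$ is infeasible (by Lemmas~\ref{lem-main1}, \ref{lem-main2}, \ref{lem-vs}), $(C_{2n+1}, L_{C_{2n+1}})$ is infeasible. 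Alternatively, and more self-containedly, one can argue directly: Ben's strategy is to maintain the invariant that the currently coloured vertices induce on $C-X$ a disjoint union of paths, and on any such path with both endpoints already coloured the two boundary colours ``force'' a proper $2$-colouring; because the cycle is odd, at the end Ben can always make the last remaining vertex uncolourable. I would use whichever is cleaner; the $\mathcal{W}$-membership argument is shortest.

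\textbf{The ``only if'' direction} is the substantive part. Assume $(G,L)$ is infeasible with $G$ a cycle. By Lemma~\ref{lem-con}, $L$ is tight, so $|L(v)|=2$ for every $v$ (since $d_G(v)=2$ on a cycle). First I rule out even cycles: if $G=C_{2n}$, I exhibit a winning strategy for Ann. The cleanest route is to note that $C_{2n}$ with all lists of size $\ge 2$ is indicated degree-choosable — one can pick any vertex $v$, let Ben colour it, and then proceed along the path $G-v$ greedily from the neighbours of $v$ outward; since the path has a natural ordering in which every vertex but the last has a later-ordered neighbour, and the last vertex has two ``free'' list slots against only... — here I must be slightly careful, because after deleting $v$ the residual lists can shrink. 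Instead I would invoke the standard fact (or re-derive it quickly via Lemma~\ref{lem-remaining}) that a path with lists of size $\ge 2$ is always indicated $L$-colourable, and that $C_{2n}$ properly $2$-list-coloured behaves like an even-length closed walk which, being bipartite, does not create a parity obstruction: Ann orders the vertices so that she colours them consecutively around the cycle, leaving the last vertex to Ben; whatever Ben does along the way, the two constraints on the last vertex come from its two neighbours whose colours, by the evenness, need not coincide and in fact Ann can force them to differ — I would make this precise by a short parity/greedy argument.

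\textbf{The main obstacle} is the case $G=C_{2n+1}$ with lists not all equal: I must show $(C_{2n+1},L)$ is \emph{feasible}, i.e. Ann wins, whenever the lists are not all the same $2$-set. Suppose two adjacent vertices $u,v$ have $L(u)\ne L(v)$; then either $L(u)\cap L(v)=\emptyset$ or $|L(u)\cap L(v)|=1$. I would argue as follows. Ann first selects $u$; Ben assigns some colour $c\in L(u)$. Now consider the path $P = C - u$ on $2n$ vertices. If the two endpoints $x,y$ of $P$ (the neighbours of $u$) end up being forced into colours that make the last vertex uncolourable, Ben would need the residual path to be infeasible; but a path is always feasible unless some internal list is emptied, which by Lemma~\ref{lem-color}/Corollary~\ref{cor-leaf} can be tracked. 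The key point is that choosing $L(u)\ne L(v)$ means Ann can pick $c\in L(u)\setminus L(v)$ — more precisely, she should select the vertex $u$ such that Ben is forced to use a colour that does \emph{not} propagate the ``same constant list'' structure around the cycle, breaking the odd-cycle obstruction. I would formalize this by showing: if $(C_{2n+1},L)$ is infeasible, then applying Lemma~\ref{lem-remaining}(2) at each vertex forces, by induction on $n$ and Corollary~\ref{cor-leaf}, that all lists coincide — deleting a degree-$2$ vertex $v$ from an infeasible tight instance and using Corollary~\ref{cor-leaf} twice reduces $C_{2n+1}$ to $C_{2n-1}$ with a list assignment that must again be infeasible, hence (by induction) constant, and chasing the colourings back through the two deletions forces $L(v)$ to equal that same $2$-set. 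The base case $C_3=K_3$ is immediate since $L$ tight with $|L(v)|=2$ and $(K_3,L)$ infeasible forces all three lists equal (otherwise Ann starts at a vertex whose list is not contained in the union of the others, contradicting Lemma~\ref{lem-color1}, or a direct check). This induction is the crux; I expect the bookkeeping of which colour survives through the two successive applications of Corollary~\ref{cor-leaf} to be the delicate step.
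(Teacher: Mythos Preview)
Your ``if'' direction is fine (and the paper's version is even shorter: an odd cycle with a constant $2$-list is simply not $L$-colourable at all, so certainly not indicated $L$-colourable).

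The gap is in your ``only if'' induction. You claim that deleting a degree-$2$ vertex $v$ from $C_{2n+1}$ and applying Corollary~\ref{cor-leaf} twice ``reduces $C_{2n+1}$ to $C_{2n-1}$.'' It does not: deleting any vertex from a cycle yields a \emph{path}, and further applications of Corollary~\ref{cor-leaf} only shorten that path. You never recover a smaller cycle, so there is no inductive hypothesis to invoke. (The only way to pass from $C_{2n+1}$ to $C_{2n-1}$ would be to reverse a tripling operation, which requires knowing in advance that three consecutive vertices share the same list---precisely the conclusion you are trying to establish.) Also, in the paragraph just before the induction, you write ``Ann can pick $c\in L(u)\setminus L(v)$''; but Ann does not pick colours, Ben does, so this line of attack does not get off the ground.

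What the paper does instead is a direct colour-chasing argument with no induction on the cycle length. Assume $L(v_1)=\{c_1,c_2\}\ne L(v_2)$. From infeasibility at $v_1$ there is $c\in L(v_1)$ with $(C-v_1,L^{v_1\to c})$ infeasible; since $C-v_1$ is a connected path, Lemma~\ref{lem-color} and iterated Corollary~\ref{cor-leaf} force $L(v_i)=\{c_i,c_{i+1}\}$ for a sequence of colours with $c_{n+1}=c_1$. Now, because $L(v_1)\ne L(v_2)$, the unique common colour is $c_2$, so Lemma~\ref{lem-color} applied at $v_2$ forces $(C-v_2,L^{v_2\to c_2})$ infeasible; chasing this around the cycle in the \emph{other} direction pins down $c_n=c_2$, then $c_{n-1}=c_1$, and so on, yielding $L(v_i)=\{c_1,c_2\}$ for all $i$---the desired contradiction. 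Once all lists are known to be constant, the even-cycle case is immediate from Corollary~\ref{cor-leaf}; there is no need for the separate (and somewhat hand-wavy) even-cycle strategy you sketch.
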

 \begin{proof}
   
     Assume $C=(v_1,v_2,\ldots, v_n)$ is a cycle, and $L$ is a degree-list assignment of $C$ and $(C,L)$ is infeasible. First we prove that 
     $L(v_i) = L(v_j)$ for all $i, j$.
     
     Assume to the contrary that $L(v_1)=\{c_1,c_2\}$ and $  L(v_2) \ne L(v_1)$.

     By Lemma \ref{lem-color}, we may assume that $C-v_1$ is not indicated $L^{v_1 \to c_2}$-colourable. By repeatedly applying Corollary \ref{cor-leaf} and Lemma \ref{lem-color}, we conclude that 
      there is a sequence of colours $c_3,c_4,\ldots, c_n$ 
     such that for $i=2,3,\ldots, n$,  $L(v_i) = \{c_i, c_{i+1}\}$ (hence $c_i \ne c_{i+1}$) and $c_{n+1}=c_1$. 

    Since $L(v_1) \ne L(v_2)$, we may assume that $c_2$ is the unique colour in $L(v_1) \cap L(v_2)$. 
    Then by Lemma \ref{lem-color},   $G-v_2 $ is not $L^{v_2 \to c_2}$-colourable. By Corollary \ref{cor-leaf}, this implies that $c_2 \in L(v_n)$, and hence $L(v_n) = \{c_1, c_2\}$, i.e., $c_n=c_2$. This in turn implies that $c_1 \in L(v_{n-1})$ and hence $c_{n-1}=c_1$ and $L(v_{n-1})=\{c_1,c_2\}$. Repeat this argument, we conclude that $L(v_i) = \{c_1,c_2\}$ for all $i$, a contradiction. 
 
       Assume $L(v_i)=\{c_1,c_2\}$ for all $v_i$. If $C$ is an odd cycle, then $G$ is not $L$-colourable and hence not indicated $L$-colourable. If $C$ is an even cycle, then it follows easily from Corollary \ref{cor-leaf} that $C$ is  indicated $L$-colourable.  
 \end{proof}

\begin{lem}
    \label{lem-2-2}
    Assume $(G,L)$ is infeasible, $uv \in E(G)$ is contained in a cycle $C$ and $d_G(u)=d_G(v)=2$. Then $L(u) = L(v)$.
\end{lem}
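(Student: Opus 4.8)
\textbf{Proof plan for Lemma \ref{lem-2-2}.}

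The plan is to use the fact that $uv$ lies on a cycle $C$ together with the assumption $d_G(u)=d_G(v)=2$, so that both $u$ and $v$ have exactly one neighbour off the edge $uv$; call them $u'$ (the other neighbour of $u$) and $v'$ (the other neighbour of $v$). Since $L$ is a degree-list assignment and $(G,L)$ is infeasible, Lemma \ref{lem-con} gives that $L$ is tight, so $|L(u)|=|L(v)|=2$. Suppose for contradiction that $L(u)\ne L(v)$. I would consider two cases according to whether $L(u)\cap L(v)=\emptyset$ or $|L(u)\cap L(v)|=1$, and in each case exhibit a winning strategy for Ann on $(G,L)$, contradicting infeasibility.

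First I would handle the disjoint case. If $L(u)\cap L(v)=\emptyset$, Ann's opening move is to pick $u$ (or $v$): whatever colour $c\in L(u)$ Ben plays, it does not lie in $L(v)$, so in the resulting pair $(G-u, L^{u\to c})$ the vertex $v$ still has $|L^{u\to c}(v)|=2 > d_{G-u}(v)=1$. Hence $L^{u\to c}$ is not tight, and by Lemma \ref{lem-con} (applied to the connected component of $G-u$ containing $v$, or more simply by the observation in Lemma \ref{lem-remaining}/Lemma \ref{lem-con} about a vertex whose list exceeds its degree) Ann wins. For the case $|L(u)\cap L(v)|=1$, write $L(u)=\{a,b\}$, $L(v)=\{b,c\}$ with $a\ne c$ and $a,c\notin$ the other list appropriately. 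Now the idea is the same but one must be slightly careful: Ann picks $u$; if Ben colours $u$ with $a$, then $L^{u\to a}(v)=\{b,c\}$ still has size $2>1$, so $v$'s list exceeds its degree in $G-u$ and Ann wins as before. So Ben is forced to colour $u$ with $b$; but then by symmetry Ann could instead have opened with $v$, forcing Ben to colour $v$ with $b$. To turn this into a genuine contradiction I would argue as follows: by Lemma \ref{lem-remaining}(2) applied to the vertex $u$, infeasibility of $(G,L)$ requires that there exist \emph{some} $c'\in L(u)$ with $(G-u,L^{u\to c'})$ infeasible; since $c'=a$ makes $v$'s list too large (not tight), Lemma \ref{lem-con} forces $c'=b$, i.e.\ $(G-u,L^{u\to b})$ is infeasible. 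Symmetrically $(G-v,L^{v\to b})$ is infeasible. Then in $(G-u,L^{u\to b})$ the vertex $v$ has degree $1$ and list $L^{u\to b}(v)=\{c\}$, so by Corollary \ref{cor-leaf} (applied to $v$) infeasibility descends to $(G-u-v, L'')$ where $L''(v')=L(v')-\{c\}$ and all other lists unchanged; this is a proper induced subgraph, hence not indicated degree-choosable, and I would derive a contradiction by comparing it with the analogous reduction obtained from the $v$-first order (which peels off $u$ instead and removes $b$, not $a$, from $L(u')$) --- the two reduced pairs live on the same graph $G-u-v$ but with incompatible lists, which cannot both be infeasible unless $a=c$.

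The main obstacle I anticipate is precisely the $|L(u)\cap L(v)|=1$ case: the ``exceeds-degree'' trick alone only forces Ben's hand on one edge endpoint, and one must combine the two forced responses carefully. A cleaner route, which I would actually try first, is to apply Lemma \ref{lem-remaining}(3) with $X=\{u,v\}$: infeasibility gives an $L$-colouring $\phi$ of the edge $uv$ (so $\phi(u)\in L(u)$, $\phi(v)\in L(v)$, $\phi(u)\ne\phi(v)$) with $(G-\{u,v\}, L^\phi)$ infeasible; now $L^\phi$ must be tight on $G-\{u,v\}$ by Lemma \ref{lem-con}, and tightness forces $\phi(u)\in L(u')$ and $\phi(v)\in L(v')$ (else that neighbour's reduced list is too large). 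Running this for the \emph{only} available colourings $\phi$ of the edge, and noting that when $L(u)\ne L(v)$ at least one endpoint has a ``private'' colour not shared with the other list, one pins down which colour must be used there and then transfers the argument to the neighbour, eventually contradicting tightness or reaching a previously-excluded induced subgraph. I expect the bookkeeping of which colour gets removed from which neighbour's list to be the delicate part, but the structural input (tightness from Lemma \ref{lem-con}, leaf-peeling from Corollary \ref{cor-leaf}) is exactly what makes it go through.
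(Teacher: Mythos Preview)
Your disjoint case is fine, but the case $|L(u)\cap L(v)|=1$ is not closed. You correctly force Ben's response at $u$ (and symmetrically at $v$) to be the common colour $b$, and after leaf-peeling you obtain two infeasible pairs on $G-\{u,v\}$: one with $L(u')\setminus\{b\}$, $L(v')\setminus\{c\}$, the other with $L(u')\setminus\{a\}$, $L(v')\setminus\{b\}$. But the assertion that these ``cannot both be infeasible unless $a=c$'' has no justification: there is no general principle preventing two different tight degree-list assignments on the same graph from both being infeasible. Your ``cleaner route'' with $X=\{u,v\}$ runs into the same wall: Lemma~\ref{lem-remaining}(3) gives you \emph{one} $\phi$, not control over all colourings of the edge, and ``transferring the argument to the neighbour'' is left as a promise rather than carried out. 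In effect you would need to walk around the cycle and reprove Lemma~\ref{lem-cycle} from scratch.

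The paper's proof avoids all of this by choosing $X$ differently. Instead of $X=\{u,v\}$, take $A=V(G)\setminus V(C)$ (with $C$ induced; since $d_G(u)=d_G(v)=2$ there is an induced cycle through $uv$). By Lemma~\ref{lem-remaining}(3) there is an $L$-colouring $\phi$ of $G[A]$ with $(C,L^{\phi})$ infeasible. Now Lemma~\ref{lem-cycle} applies directly: all vertices of $C$ have the same $L^{\phi}$-list. Since $u$ and $v$ have degree $2$ in $G$, they have no neighbours in $A$, so $L^{\phi}(u)=L(u)$ and $L^{\phi}(v)=L(v)$, giving $L(u)=L(v)$ immediately. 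The point you missed is that the cycle $C$ in the hypothesis is not just there to make $G-u$ connected; it is the set you should reduce \emph{to}.
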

\begin{proof} 
    Let $A = V(G)-V(C)$. Then there is an $L$-colouring $\phi$ of $A$ such that $(G-A, L^{\phi})$ is infeasible. Thus $L(u) = L^{\phi}(u) = L^{\phi}(v)=L(v)$ by Lemma \ref{lem-cycle}.
\end{proof}

\begin{lem}
    \label{lem-twins}
    Assume $(G,L)$ is an infeasible pair, and $v,v'$ are adjacent twins, i.e., $N_G[v]=N_G[v']$. Then  
    $(G,L)$ is obtained from an infeasible $(G',L')$ by duplicating $v$, where $G'=G-v'$.
\end{lem}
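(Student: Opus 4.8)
The plan is to show that the pair $(G', L')$ obtained by deleting the twin $v'$ from $(G,L)$ is itself infeasible, and that $(G,L)$ is exactly $D_{v,c}(G',L')$ for an appropriate colour $c$. First I would argue that $v'$ must be a ``leaf-like'' vertex in the sense that $L(v')$ contains a colour $c$ that appears in no other list in $N_G[v']$. Indeed, since $L$ is a tight degree-list assignment (Lemma~\ref{lem-con}) and $v, v'$ are adjacent twins, $d_G(v)=d_G(v')$ and $L(v), L(v')$ have the same size; the clique $N_G[v]=N_G[v']$ has $d_G(v)+1$ vertices, but each list has only $d_G(v)$ colours, so there must be a colour $c \in L(v')$ not forced by adjacency constraints. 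More carefully, I would use Lemma~\ref{lem-color1} together with Lemma~\ref{lem-remaining}(2) applied to the vertex $v'$: there is a colour $c \in L(v')$ such that $(G-v', L^{v' \to c})$ is infeasible, and by Lemma~\ref{lem-color} applied in reverse (or directly by tightness), this $c$ cannot lie in $L(x)$ for any $x \in N_G(v') = N_G[v] - \{v'\}$, and also $c \notin L(v)$ since $L(v)=L(v')$ would then force a colour collision making $L^{v'\to c}$ non-tight at $v$; hence $c \notin \bigcup_{x \in N_G[v]} L(x)$, i.e.\ $c \notin L(x)$ for all $x \in N_G[v']$ as well.

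Next I would identify $(G', L')$. Set $G' = G - v'$ and $L' = L^{v' \to c}$ restricted appropriately; but one must be careful that $L'$ should be the list assignment with $L'(x) = L(x) \setminus \{c\}$ for $x \in N_G(v')$ and $L'(v) = L(v) \setminus \{c\} = L(v)$ (since $c \notin L(v)$) and $L'(y) = L(y)$ otherwise. Because $c$ appears in no list of $N_G[v']$ except $L(v')$ itself, deleting $c$ from the neighbours' lists does nothing, so in fact $L'(x) = L(x)$ for all $x \in V(G')$; that is, $(G', L') = (G-v', L\restriction_{V(G')})$ with the caveat that we should remove $c$ globally if it occurs elsewhere — but since $(G,L)$ is built only using colours that matter, one checks $c$ occurs only at $v'$. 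Then by definition of the duplication operation, $D_{v,c}(G', L')$ adds back a vertex $v'$ adjacent to $N_{G'}[v] = N_G[v] - \{v'\}$, adds $c$ to each of their lists and sets the new vertex's list to $L'(v) \cup \{c\} = L(v) \cup \{c\}$. I need this to reproduce $(G,L)$ exactly, which forces $L(v') = L(v) \cup \{c\}$ and $L(x) = L'(x) \cup \{c\}$ for $x \in N_G(v)$ — i.e.\ $c \in L(x)$ for all $x \in N_G[v]$.

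Here is the subtlety I expect to be the main obstacle: the two descriptions of $c$ pull in opposite directions — from the ``leaf colour'' argument I want $c \notin \bigcup_{x \in N_G[v']} L(x)$, but to realize $(G,L)$ as a duplication I need $c \in L(x)$ for every $x \in N_G[v]$. The resolution must be that one does \emph{not} delete $v'$ but rather picks the colour $c$ from $L(v')$ that is the ``private'' colour of $v'$ in the sense of the duplication operation — i.e.\ I should instead show that there is a colour $c \in L(v') = L(v)$ wait, no. The correct statement is: since $L(v) = L(v')$ and these are the lists of two twins in a clique $K = N_G[v]$ of size $|L(v)|+1$, pick any colour $c \in L(v')$ and form $(G', L')$ by deleting $v'$ and removing $c$ from $L(v')$'s former neighbours — but that is not a degree-list assignment. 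So the honest approach: let $c$ be a colour with $c \notin L(x)$ for every $x \in V(G) \setminus N_G[v']$ is automatic only for newly-introduced colours. I would therefore argue directly: by Lemma~\ref{lem-remaining} choose $c \in L(v')$ with $(G - v', L^{v' \to c})$ infeasible; by Lemma~\ref{lem-color} $c \in L(x)$ for all $x \in N_G(v') = N_G[v]\setminus\{v'\}$, in particular $c \in L(v)$, so $c$ is \emph{not} private after all. Set $G' = G - v'$, $L'(x) = L(x) \setminus \{c\}$ for $x \in N_G(v')$ and $L'(y) = L(y)$ for all other $y$, including $L'(v') $ undefined; one checks $L'$ is a tight degree-list assignment of $G'$ (each neighbour loses one list-colour and one neighbour), that $(G',L')$ is infeasible (it equals $(G-v', L^{v'\to c})$ after noting $c \notin L(v)$ is false — rather $c\in L(v)$, so $L^{v'\to c}(v) = L(v)\setminus\{c\}$, matching $L'(v)$), and finally that $D_{v', c}(G', L')$ — duplicating $v'$ wait $v'\notin G'$ — that $D_{v,c}(G',L')$ returns precisely $(G,L)$, using that $L(v') = L'(v) \cup \{c\} = (L(v)\setminus\{c\})\cup\{c\} = L(v)$ and $L(x) = L'(x) \cup \{c\}$ for $x \in N_G(v)$. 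The obstacle is thus purely bookkeeping: verifying that ``removing colour $c$ from the neighbours' lists and deleting $v'$'' is the exact inverse of the duplication operation $D_{v,c}$, and that the resulting $(G',L')$ remains a legitimate (tight) infeasible pair; the infeasibility of $(G',L')$ is immediate from the choice of $c$ via Lemma~\ref{lem-remaining}.
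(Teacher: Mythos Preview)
Your final paragraph lands on the right mechanism --- choose $c\in L(v')$ with $(G-v',L^{v'\to c})$ infeasible via Lemma~\ref{lem-remaining}, use Lemma~\ref{lem-color} to get $c\in L(x)$ for every $x\in N_G(v')$, set $(G',L')=(G-v',L^{v'\to c})$, and check $D_{v,c}(G',L')=(G,L)$ --- and this is exactly the paper's route.  But there is one genuine gap: you never prove $L(v)=L(v')$.  Your verification line ``$L(v') = L'(v)\cup\{c\} = (L(v)\setminus\{c\})\cup\{c\} = L(v)$'' only computes that the duplicate vertex in $D_{v,c}(G',L')$ receives the list $L(v)$; asserting the first equality \emph{is} asserting $L(v')=L(v)$, which is what remains to be shown.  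The paper devotes its opening paragraph to precisely this: assuming $L(v)\neq L(v')$, Ann orders $V(G)$ so that $v,v'$ come last (possible since $G$ is connected); after Ben colours $G-\{v,v'\}$ by some $\phi$, the residual lists satisfy $L^{\phi}(v)=L(v)\setminus S$ and $L^{\phi}(v')=L(v')\setminus S$ with $|S|\le d_G(v)-1$, and a short count shows these cannot both be singletons and equal unless $L(v)=L(v')$.  Hence $G[\{v,v'\}]$ is indicated $L^{\phi}$-colourable for every $\phi$, contradicting infeasibility.

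Two smaller points.  The whole first half of your proposal chases the wrong sign: the duplication operator $D_{v,c}$ requires $c\notin L'(x)$ for $x\in N_{G'}[v]$, and this is automatic because $L'=L^{v'\to c}$ already strips $c$ from those lists; you neither need nor will obtain $c\notin L(x)$ in the original assignment --- on the contrary, Lemma~\ref{lem-color} forces $c\in L(x)$ there, which is what makes $L'(x)\cup\{c\}=L(x)$ work.  Second, when invoking Lemma~\ref{lem-color} you should note that $G-v'$ is connected (immediate since its twin $v$ is adjacent to all of $N_G(v')$), as connectedness is a hypothesis of that lemma.
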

\begin{proof}
First we show that $L(v)=L(v')$. Otherwise, the restriction of $L$ to $G-\{v,v'\}$ is a non-tight degree-list assignment of $G-\{v,v'\}$. Hence there is an $L$-colouring of $G-\{v,v'\}$. For any  $L$-colouring $\phi$ of $G-\{v,v'\}$, $L^{\phi}$ is a degree-list assignment of $G[\{v,v'\}]$, which is either non-tight or $L^{\phi}(v) \ne L^{\phi}(v')$. In any case, $G[\{v,v'\}]$ is indicated $L^{\phi}$-colourable, a contradiction.

    Since $G$ is not indicated $L$-colourable, there is a colour $c \in L(v')$ such that $G-v'$ is not indicated $L^{v' \to c}$-colourable.
    Since $L$ is a degree-list assignment of $G$, by Lemma \ref{lem-color}, we know that $c \in L(x)$ for all $x \in N_G[v']$. 
     Therefore $(G,L)= D_{v,c}(G-v', L^{v' \to c})$.
\end{proof}

\begin{cor}
\label{cor-clique}
If $(G,L)$ is infeasible, and $K$ is a clique in $G$ and $N_G[u] = N_G[v]$ for all $u, v \in V(K)$.  Then there is a set $A$ of $|V(K)|-1$ colours such that $A \subseteq L(x)$ for all $x \in N_G(v)$ for $v \in V(K)$.
\end{cor}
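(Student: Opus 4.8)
The plan is to argue by induction on $|V(K)|$, peeling off the vertices of $K$ one at a time with Lemma~\ref{lem-twins}. When $|V(K)| = 1$ we may take $A = \emptyset$. So suppose $m := |V(K)| \ge 2$ and write $V(K) = \{v_1, \dots, v_m\}$; since $N_G[v_i] = N_G[v_j]$ for all $i, j$, these vertices are pairwise adjacent twins, and $N_G(V(K)) = N_G[v_1]$ (this equality uses $m \ge 2$). Hence it suffices to produce a set $A$ of $m - 1$ colours with $A \subseteq L(x)$ for every $x \in N_G[v_1]$.

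First I would apply Lemma~\ref{lem-twins} to the adjacent twins $v_1, v_m$, which writes $(G, L)$ as $D_{v_1, c}(G', L')$ for some colour $c$ and some infeasible pair $(G', L')$ with $G' = G - v_m$. Unwinding the definition of the duplication operation (in which $v_m$ is the added ``duplicate'' vertex), one reads off that $c \in L(x)$ for every $x \in N_G[v_1]$, that $L'(x) = L(x) \setminus \{c\}$ for every $x \in N_{G'}[v_1]$, and that $L(v_m) = L'(v_1) \cup \{c\} = L(v_1)$. Now $K' := K - v_m$ is a clique of $G'$ whose vertices are pairwise adjacent twins in $G'$, so the induction hypothesis furnishes a set $A'$ of $m - 2$ colours with $A' \subseteq L'(x)$ for every $x \in N_{G'}(V(K'))$; since $v_1 \in V(K')$ this in fact gives $A' \subseteq L'(x)$ for every $x \in N_{G'}[v_1]$ (when $m \ge 3$ a second vertex of $K'$ takes care of $x = v_1$; when $m = 2$ one has $A' = \emptyset$ and there is nothing to check).

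I would then set $A := A' \cup \{c\}$. The colour $c$ does not lie in $A'$: for $m \ge 3$ the vertex $v_2 \in N_{G'}[v_1]$ has $c \notin L'(v_2) \supseteq A'$, and for $m = 2$ we have $A' = \emptyset$; thus $|A| = m - 1$. For each $x \in N_{G'}[v_1]$, combining $A' \subseteq L'(x) = L(x) \setminus \{c\}$ with $c \in L(x)$ yields $A \subseteq L(x)$; and since $v_1 \in N_{G'}[v_1]$ and $L(v_m) = L(v_1)$, also $A \subseteq L(v_m)$. As $N_G[v_1] = N_{G'}[v_1] \cup \{v_m\}$, this proves $A \subseteq L(x)$ for all $x \in N_G[v_1]$, which closes the induction and, by the reduction in the first paragraph, gives the corollary.

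The argument is essentially bookkeeping once Lemma~\ref{lem-twins} is in hand, and I expect the only delicate points to be: handling the degenerate case $|V(K')| = 1$, where the inductive statement about $N_{G'}(V(K'))$ does not literally mention $v_1$ itself and one must fall back on $A' = \emptyset$; and checking that the colour $c$ peeled off at each step is genuinely new, which rests on exhibiting a vertex of $N_{G'}[v_1] \setminus \{v_1\}$ (such as $v_2$) whose list loses $c$ when $L$ is replaced by $L'$.
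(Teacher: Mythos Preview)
Your proof is correct and is essentially the intended argument: the paper states this corollary without proof immediately after Lemma~\ref{lem-twins}, and the implicit justification is precisely the induction you carry out, peeling off one twin at a time and collecting the colours $c$ produced by the duplication operation. Your handling of the edge cases ($|V(K')|=1$ and showing $c\notin A'$) is careful and correct.
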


\begin{lem}
    \label{lem-triangle}
    Assume that $(\Theta_{1,2k_1, 2k_2},L)$ is infeasible, then $L$ is isomorphic to $L_{\Theta_{1,2k_1,  2k_2}}$ defined in (4) Example \ref{example-1}. Consequently, $(\Theta_{1,2k_1,  2k_2},L) \in \mathcal{W}$.
\end{lem}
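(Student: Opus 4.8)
The plan is to determine $L$ completely by collapsing two of the three internally-disjoint paths of the theta graph down to odd cycles and then applying Lemma~\ref{lem-cycle}. Write $\Theta$ for $\Theta_{1,2k_1,2k_2}$, let $u,v$ be the two end vertices (joined by the length-$1$ path), and let $P_1,P_2$ be the two even paths between them, with internal (degree-$2$) vertices $x_1,\dots,x_{2k_1-1}$ and $y_1,\dots,y_{2k_2-1}$ respectively; thus $d_\Theta(u)=d_\Theta(v)=3$. Since $\Theta$ is connected and $(\Theta,L)$ is infeasible, Lemma~\ref{lem-con} shows $L$ is tight, so $|L(u)|=|L(v)|=3$ and every internal list has size $2$.

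First I would apply the third equivalent condition in Lemma~\ref{lem-remaining} with $X$ the set of internal vertices of $P_2$ (a subgraph with all lists of size $2$, hence $L$-colourable): this produces an $L$-colouring $\phi$ of $\Theta[X]$ such that $\Theta-X$ is not indicated $L^{\phi}$-colourable. Now $\Theta-X$ is exactly the cycle $C_1$ of odd length $2k_1+1$ formed by $P_1$ together with the edge $uv$, and $L^{\phi}$ is a degree-list assignment of $C_1$ (each $x_i$ retains its list, while $L^{\phi}(u)=L(u)\setminus\{\phi(y_1)\}$ and $L^{\phi}(v)=L(v)\setminus\{\phi(y_{2k_2-1})\}$ still have size $\ge 2$). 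By Lemma~\ref{lem-cycle}, all vertices of $C_1$ have one common list under $L^{\phi}$; call it $\{a,b\}$. Hence every internal vertex of $P_1$ has list $\{a,b\}$, and since $|L(u)|=3$ we obtain $L(u)=\{a,b,\phi(y_1)\}$ with $\phi(y_1)\in L(y_1)\setminus\{a,b\}$, and symmetrically $L(v)=\{a,b,\phi(y_{2k_2-1})\}$ with $\phi(y_{2k_2-1})\in L(y_{2k_2-1})\setminus\{a,b\}$.

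Running the symmetric argument with $X$ the set of internal vertices of $P_1$ gives that all internal vertices of $P_2$ share a list $\{a',b'\}$ and that $L(u)=\{a',b',\beta_u\}$, $L(v)=\{a',b',\beta_v\}$ for some $\beta_u\in L(x_1)\setminus\{a',b'\}$, $\beta_v\in L(x_{2k_1-1})\setminus\{a',b'\}$. Comparing the two descriptions of $L(u)$: from $\{a,b,\phi(y_1)\}=\{a',b',\beta_u\}$ together with $\beta_u\in\{a,b\}\setminus\{a',b'\}$ and $\phi(y_1)\in\{a',b'\}\setminus\{a,b\}$, a short set computation forces $|\{a,b\}\cap\{a',b'\}|=1$. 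Letting $s$ be the shared colour, $p$ the other element of $\{a,b\}$ and $q$ the other element of $\{a',b'\}$, we get $L(u)=\{p,q,s\}$ with $p,q,s$ distinct; the identical analysis at $v$ forces $\phi(y_{2k_2-1})=q$, $\beta_v=p$, hence $L(v)=\{p,q,s\}=L(u)$. Therefore $L(u)=L(v)=\{p,q,s\}$, every internal vertex of $P_1$ has list $\{p,s\}$, and every internal vertex of $P_2$ has list $\{q,s\}$, so after relabelling $(p,q,s)\mapsto(1,2,3)$ the pair $(\Theta,L)$ is isomorphic to $(\Theta_{1,2k_1,2k_2},L_{\Theta_{1,2k_1,2k_2}})$ as defined in item (4) of Example~\ref{example-1}. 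Since that example constructs $(\Theta_{1,2k_1,2k_2},L_{\Theta_{1,2k_1,2k_2}})$ as a member of $\mathcal{W}$ (starting from $(K_{1,2},L_{K_{1,2}})\in\mathcal W$, duplicating the centre, and then tripling degree-$2$ vertices), we conclude $(\Theta_{1,2k_1,2k_2},L)\in\mathcal W$.

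The degree-list verifications and the colour bookkeeping are routine; the only point to watch is why the third colour of $L(u)$ extracted from the $P_2$-reduction and the one extracted from the $P_1$-reduction coincide, and similarly for $v$. Each of them is forced to equal the unique element of $L(u)$ (respectively $L(v)$) lying outside the common internal list of the \emph{other} path, which is exactly what makes the shared colour $s$ and the two private colours $p,q$ well-defined and consistent at both ends of $\Theta$.
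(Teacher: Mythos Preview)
Your proof is correct and takes essentially the same route as the paper: both reduce to each of the two odd cycles $C_i$ via Lemma~\ref{lem-remaining}, apply Lemma~\ref{lem-cycle} to get constant two-element lists $A_1,A_2$ on the two cycles, and then compare at the end vertices to force $|A_1\cap A_2|=1$ and $L(u)=L(v)=A_1\cup A_2$. The only difference is cosmetic --- you track the deleted colours $\phi(y_1),\beta_u,\dots$ explicitly, whereas the paper dispatches $A_1\ne A_2$ in one line by noting $\phi_1(x)\in A_1$ for the neighbour $x$ of $v$ in $P_2$.
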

\begin{proof}
Assume $\Theta_{1,2k_1,2k_2}$ consists of three paths $P_1,P_2,P_3$ connecting two vertices $u$ and $v$, where for $i=1,2$, $P_i$ has length $2k_i$, and $P_3$ is a single edge $uv$. 

For $i=1,2$, let  
 $C_i$ be the odd cycle induced by the $i$th path and the edge $uv$. 
 By Lemma \ref{lem-remaining}, there exists an $L$-colouring $\phi_i$ of $G-C_i$ such that $(C_i,L^{\phi_i})$ is infeasible. By Lemma \ref{lem-cycle}, $L^{\phi_i}(x) = A_i$ for some 2-colour set $A_i$ for all $x \in V(C_i)$. Note that $A_1 \ne A_2$, for otherwise, let $x$ be the neighbour of $v$ in $P_2$, then $\phi_1(x) \in A_1$ and hence $L^{\phi_1}(v) \ne A_1$, a contradiction.

    As $A_1 \cup A_2 \subseteq L(u)$  and $|L(u)|=3$, we conclude that   $|A_1 \cap A_2|=1$, and $L(u)=L(v) = A_1 \cup A_2$.   Hence $L$ is isomorphic to $L_{\Theta_{1, 2k_1, 2k_2}}$.
\end{proof}

\begin{definition}
    \label{def-thetaplus}
    Assume $k_1,k_2,k_3 \ge 2$, and the theta graph $\theta_{k_1,k_2,k_3}$ consists of paths $P_i=[u,v_{i,1}, v_{i,2}, \ldots v_{i, k_i-1}, w]$ connecting   $u$ and $w$, for $i=1,2,3$.  
A graph $H$ obtained from $\theta_{k_1,k_2,k_3}$ by adding some edges joining vertices of $V(P_1) \backslash  \{u,w, v_{1,1}\}$ and $P_3 \backslash  \{u,w\}$ is called a {\em theta-plus} graph. The two vertices $u,w$ are the {\em end vertices} of the theta-plus graph.
\end{definition}

\begin{figure}[ht]
\centering
\begin{tikzpicture}[scale=1.2, every node/.style={scale=0.8}, baseline]

% ------------------------------------------
% 第一部分: theta_{3,2,3} 图 (theta-plus)
\begin{scope}[shift={(0, 0)}]
    \coordinate (U) at (0.8, 1) {};
    \coordinate (W) at (0.8, -1) {};
    \coordinate (V11) at (0, 0.5) {};
    \coordinate (V12) at (0, -0.5) {};
    \coordinate (V21) at (0.8, 0) {};
    \coordinate (V31) at (1.6, 0.5) {};
    \coordinate (V32) at (1.6, -0.5) {};

    \draw (U) -- (V11) -- (V12) -- (W);
    \draw (U) -- (V31) -- (V32) -- (W);
    \draw (U) -- (V21) -- (W); 
  \draw (V12) .. controls (0.3,0.2) .. (V31);
    \draw (V12) --(V32); 

\foreach \point in {U, W, V11, V12, V21, V31, V32} {
    \filldraw[fill=white, draw=black](\point) circle (1.5pt);
}

\node[above] at (U){\bl{$u$}};
\node[below] at (W){\bl{$w$}};
\node[left] at (V11){\bl{$v_{11}$}};
\node[left] at (V12){\bl{$v_{12}$}};
\node[right] at (V21){\bl{$v_{21}$}};
\node[right] at (V31){\bl{$v_{31}$}};
\node[right] at (V32){\bl{$v_{32}$}};

\node[below=0.7cm] at (W){A theta-plus graph};

\end{scope}

%\hspace{3cm}

% ------------------------------------------
% 第二部分: 两个7点圆图 (两条弦)
\begin{scope}[shift={(3.7, 0)}]
    % 创建7个点
    \foreach \i in {1,...,7} {
        \coordinate (P\i) at ({360/7 * (\i - 1)}:1) {};
    }
    
    % 绘制两条弦
    \draw (P1) -- (P3);
    \draw (P4) -- (P7);
    \draw (P1)--(P2)--(P3)--(P4)--(P5)--(P6)--(P7)--(P1);
    % 绘制圆圈，将每个点用 \filldraw 处理
    \foreach \i in {1,...,7} {
        \filldraw[fill=white, draw=black] (P\i) circle (1.5pt);  % 注意这里是 (P\i)，而非整数
    }
    \node[right=0.2cm,below=2cm] at (P1) {Two double chorded cycles};

\end{scope}

\begin{scope}[shift={(5.8, 0)}]
    % 创建7个点
    \foreach \i in {1,...,7} {
        \coordinate (P\i) at ({360/7 * (\i - 1)}:1) {};
    }
    
    % 绘制两条弦
    \draw (P3) -- (P7);
    \draw (P3) -- (P6);
        \draw (P1)--(P2)--(P3)--(P4)--(P5)--(P6)--(P7)--(P1);
    % 绘制圆圈，将每个点用 \filldraw 处理
    \foreach \i in {1,...,7} {
        \filldraw[fill=white, draw=black] (P\i) circle (1.5pt);  % 注意这里是 (P\i)，而非整数
    }
    
\end{scope}

% ------------------------------------------
% 第三部分: 带圆心的7点图
% 图1：中心点与连续4个顶点相连
\begin{scope}[shift={(8.6, 0)}]
    \coordinate (C) at (0, 0) {};
    \foreach \i in {1,...,7} {
        \coordinate (R\i) at ({360/7 * (\i - 1)}:1) {};
    }
    \draw (C) -- (R1);
    \draw (C) -- (R2);
    \draw (C) -- (R3);
    \draw (C) -- (R4);
        \draw (R1)--(R2)--(R3)--(R4)--(R5)--(R6)--(R7)--(R1);
     % 绘制圆圈，将每个点用 \filldraw 处理
    \foreach \i in {1,...,7} {
        \filldraw[fill=white, draw=black] (R\i) circle (1.5pt);  % 注意这里是 (P\i)，而非整数
    }
      \filldraw[fill=white, draw=black] (C) circle (1.5pt);  
 \node[right=0.1cm,below=2cm] at (R1) {Near odd-wheels};
\end{scope}

% 图2：中心点与间隔顶点相连
\begin{scope}[shift={(10.7, 0)}]
    \coordinate (C2) at (0, 0) {};
    \foreach \i in {1,...,7} {
        \coordinate (S\i) at ({360/7 * (\i - 1)}:1) {};
    }
    \draw (C2) -- (S1);
    \draw (C2) -- (S2);
    \draw (C2) -- (S4);
\draw (S1)--(S2)--(S3)--(S4)--(S5)--(S6)--(S7)--(S1);
     % 绘制圆圈，将每个点用 \filldraw 处理
    \foreach \i in {1,...,7} {
        \filldraw[fill=white, draw=black] (S\i) circle (1.5pt);  % 注意这里是 (P\i)，而非整数
    }

      \filldraw[fill=white, draw=black] (C2) circle (1.5pt);  
\end{scope}

\end{tikzpicture}
\caption{Theta-plus graphs, double chorded cycles and near odd-wheels}
\label{fig-7}
\end{figure}
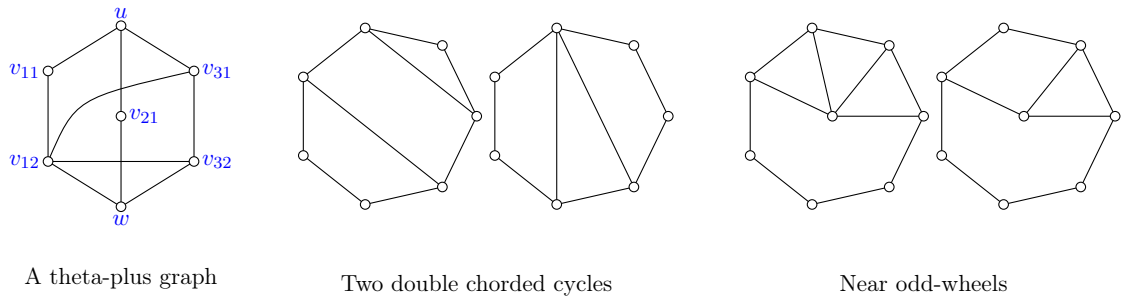

\begin{lem}
\label{lem-thetaplus}
 Every theta-plus graph is indicated degree-choosable.
\end{lem}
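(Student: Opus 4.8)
The plan is to argue by contradiction: suppose $H$ is a theta-plus graph with $L$ a degree-list assignment for which $(H,L)$ is infeasible; by Lemma~\ref{lem-con} we may assume $L$ is tight. I will lean throughout on the principle recorded at the start of this section: when $L$ is tight, for every $X\subseteq V(H)$ the colouring $\phi$ of $H-X$ supplied by Lemma~\ref{lem-remaining}(3) makes $L^{\phi}$ automatically a degree-list assignment of $H[X]$ (a neighbour of a vertex of $X$ lying outside $X$ deletes at most one colour, exactly matching the drop in degree), so $(H[X],L^{\phi})$ is infeasible. In particular, by Lemma~\ref{lem-cycle}, \emph{$H$ contains no induced even cycle}; the method is to keep exhibiting induced cycles (and, via Lemma~\ref{lem-triangle}, induced $\Theta_{1,2s,2t}$ subgraphs) of $H$ and of its reducts, and to deduce enough parity and list information to reach a collision.

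Write $P_i=[u,v_{i,1},\dots,v_{i,k_i-1},w]$ for $i=1,2,3$, the added chords joining $V(P_1)\setminus\{u,w,v_{1,1}\}$ to $V(P_3)\setminus\{u,w\}$. Since every chord has both ends among internal vertices of $P_1\cup P_3$, it lies outside $V(P_1\cup P_2)$ and outside $V(P_2\cup P_3)$; hence $C^{(1)}:=P_1\cup P_2$ and $C^{(2)}:=P_2\cup P_3$ are induced cycles, so $k_1+k_2$ and $k_2+k_3$ are odd and $k_1\equiv k_3\pmod 2$. If $H$ has no chord then $P_1\cup P_3$ is also an induced cycle, of even length $k_1+k_3$, a contradiction; this settles the case of ordinary theta graphs. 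Otherwise I first eliminate $P_2$: applying Lemma~\ref{lem-remaining}(2) to $v_{2,1}$ yields $c\in L(v_{2,1})$ with $(H-v_{2,1},L^{v_{2,1}\to c})$ infeasible, and since $H-v_{2,1}$ is connected, Lemma~\ref{lem-color} forces $c\in L(u)$; as all internal vertices of $P_2$ carry a common $2$-list (Lemma~\ref{lem-2-2}), repeatedly applying Corollary~\ref{cor-leaf} along $P_2$ colours its vertices with the two alternating colours and leaves an infeasible pair $(H',L')$, where $H'$ is the ``cycle-plus-chords'' graph on $V(P_1)\cup V(P_3)$, with $|L'(u)|=|L'(w)|=2$ (tightness forces the colours deleted at $u$ and $w$ to belong to their lists) and $L'=L$ elsewhere.

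It remains to contradict the infeasibility of $(H',L')$, and this is the technical heart. When $H'$ has a single chord $v_{1,i}v_{3,j}$ away from the ends of $P_1,P_3$, the two cycles $[u,v_{1,1},\dots,v_{1,i},v_{3,j},\dots,v_{3,1},u]$ and $[u,v_{1,1},\dots,v_{1,i},v_{3,j},v_{3,j+1},\dots,v_{3,k_3-1},w,v_{2,k_2-1},\dots,v_{2,1},u]$ are induced in $H$ (the unique chord is an edge of each), so both are odd, forcing $i+j$ and $i+j+k_2+k_3$ to be even, whence $k_2+k_3$ is even, contradicting the previous paragraph; a chord at $v_{1,k_1-1}$ or at $v_{3,1}$ instead makes $H'$ a $\Theta_{1,2,2t}$, and Lemma~\ref{lem-triangle} together with a second elimination sequence — picking $v_{1,1}$ in $H$ and cascading down $P_1$ to obtain another $\Theta_{1,2,\cdot}$, then comparing the forced lists $L(v_{3,1})$ from the two runs — produces a contradiction of the form ``$c\in A_2$ yet $c\notin A_2$''. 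For several chords one picks an ``innermost'' chord $v_{1,i}v_{3,j}$ (with $i$, then $j$, minimal), so that the near-$u$ cycle through it stays induced and hence odd, analyses the chords incident to $v_{1,i}$ via the induced cycles they span on $P_3$ (consecutive such chords bound an induced cycle, pinning down the parities of their $P_3$-indices), and again carves out $\Theta_{1,2s,2t}$ subgraphs to apply Lemma~\ref{lem-triangle}; matching the resulting list constraints against each other and against the parities $k_1\equiv k_3$, $k_1+k_2$ odd, $k_2+k_3$ odd yields the contradiction. I expect the main obstacle to be exactly this last step: setting up the case division on the chord pattern so that, in every configuration, enough induced cycles and $\Theta_{1,2s,2t}$ subgraphs can be extracted and their lists reconciled — the work is in the bookkeeping, not in any single idea.
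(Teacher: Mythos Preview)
Your proposal is incomplete: you explicitly leave the multi-chord configuration as a sketch (``I expect the main obstacle to be exactly this last step''), and even your single-chord treatment splits into sub-cases that are only outlined. More importantly, you are missing the structural observation that collapses the lemma to a three-line argument with no case division on the chord pattern at all.

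The definition of a theta-plus graph deliberately excludes $v_{1,1}$ from being a chord endpoint: chords run between $V(P_1)\setminus\{u,w,v_{1,1}\}$ and $V(P_3)\setminus\{u,w\}$. Hence $d_H(v_{1,1})=2$ \emph{regardless of how many chords are present}, and likewise every internal vertex of $P_2$ has degree $2$. The paper exploits this directly. Set $C_1=P_1\cup P_2$ and $C_2=P_2\cup P_3$; both are induced (no chord has both endpoints in either), hence odd. Applying Lemma~\ref{lem-remaining} to $C_1$ gives an $L$-colouring $\phi$ of $H-V(C_1)$ with $(C_1,L^{\phi})$ infeasible; since $v_{1,1}$ and $v_{2,1}$ have degree $2$ in $H$ with all neighbours inside $C_1$, Lemma~\ref{lem-cycle} gives $L(v_{1,1})=L^{\phi}(v_{1,1})=L^{\phi}(u)=L^{\phi}(v_{2,1})=L(v_{2,1})$. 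Now apply Lemma~\ref{lem-remaining} to $C_2$: there is a colouring $\psi$ of $H-V(C_2)$ with $(C_2,L^{\psi})$ infeasible, and the same reasoning gives $L^{\psi}(u)=L(v_{2,1})$. But $v_{1,1}\in N_H(u)\setminus V(C_2)$ is coloured by $\psi$, so $\psi(v_{1,1})\notin L^{\psi}(u)$; yet $\psi(v_{1,1})\in L(v_{1,1})=L(v_{2,1})=L^{\psi}(u)$, a contradiction.

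Your strategy of eliminating $P_2$ first discards exactly the leverage needed: the whole point is that $v_{1,1}$ lies on $C_1$ but \emph{not} on $C_2$, so comparing the two infeasible reductions at $u$ forces the collision immediately. The asymmetry built into the definition (excluding $v_{1,1}$ but not $v_{3,1}$) is not an accident; it is what makes the proof uniform over all chord configurations.
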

\begin{proof}
Assume $H$ is a theta-plus graph, and let $P_i$ ($i=1,2,3$) be the paths defined in Definition \ref{def-thetaplus}.
 It follows from the definition that $d_H(v) = 2$ for $ v \in V(P_2) \cup \{v_{1,1}\} - \{u,w\}$.  Assume to the contrary that $L$ is a degree-list assignment of $H$ and $(H,L)$ is infeasible.  
   
Let $C_1 = P_1 \cup P_2$ and $C_2 = P_2 \cup P_3$.  Note that $C_1,C_2$ are induced odd cycles in $H$.   By Lemma \ref{lem-remaining}, there exists an $L$-colouring $\phi$ of $G-V(C_1)$ such that $(C_1,L^\phi)$ is infeasible. As $d_G(v_{1,1})=d_G(v_{2,1})=2$, $L(v_{1,1})=L^\phi(v_{1,1})=L^\phi(u)=L^\phi(v_{2,1})=L(v_{2,1})$. Similarly, there exists an $L$-colouring $\psi$ of $G-V(C_2)$ such that $(C_2,L^{\psi})$ is infeasible. Then  $  L^\psi(u)=L^\psi(v_{2,1})=L(v_{2,1})$. But $\psi(v_{1,1}) \in L(v_{1,1}) = L(v_{2,1})$. Hence    $\psi(v_{1,1}) \in L^\psi(u)$, a contradiction.
\end{proof}

\begin{definition}
    \label{def-twochordcycle}
    Assume   $H$ is a graph that is obtained from a cycle $C=[v_1v_2\dots v_l]$ by adding   two chords $e_1=v_iv_j$ and $e_2=v_sv_t$, where $i  < j \le s < t$. Then we call $H$ is a {\em   double chorded cycle}.
\end{definition}

\begin{lem}
\label{lem-double chorded cycle}
   Any double chorded cycle is indicated degree-choosable.
\end{lem}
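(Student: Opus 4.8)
The strategy is to argue by contradiction, exactly as in the proofs of Lemma~\ref{lem-thetaplus} and Lemma~\ref{lem-triangle}. Suppose $H$ is a double chorded cycle with degree-list assignment $L$ such that $(H,L)$ is infeasible. The structure of $H$ is a cycle $C = [v_1 v_2 \dots v_l]$ plus two chords $e_1 = v_i v_j$ and $e_2 = v_s v_t$ with $i < j \le s < t$. All vertices of $C$ other than $\{v_i, v_j, v_s, v_t\}$ have degree exactly $2$; the chord endpoints have degree $3$ (or $4$, in the boundary cases $j = s$ or when the two chords share an endpoint, which I would treat separately or absorb). The chords split $C$ into arcs, and the key observation is that these arcs, together with the chords, form several cycles: in the generic case $i < j < s < t$, the three "faces" give cycles, and one wants to find two cycles $C_1, C_2$ through a common degree-$2$ vertex whose lists are forced by Lemma~\ref{lem-cycle} to be constant $2$-element sets, and then derive a parity/list clash just as in Lemma~\ref{lem-thetaplus}.

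**Key steps.** First I would reduce to the case where $H$ is $2$-connected and where no vertex has a list strictly larger than its degree (by Lemma~\ref{lem-con}/Lemma~\ref{lem-color}, infeasibility forces $L$ tight). Next, for each choice of cycle $C'$ that is an induced subgraph of $H$ running through some degree-$2$ vertices, apply Lemma~\ref{lem-remaining} to get an $L$-colouring $\phi$ of $H - V(C')$ with $(C', L^\phi)$ infeasible; by Lemma~\ref{lem-cycle} this means $C'$ is odd and $L^\phi$ is constant on $V(C')$, equal to some $2$-set $A_{C'}$. Since the degree-$2$ vertices on $C'$ have $L^\phi(v) = L(v)$, this pins down $L(v)$ on long stretches of $C$. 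The heart of the argument: the two chords create at least three distinct "elementary" cycles through the vertices of $C$, and at least two of them share a degree-$2$ vertex (unless one of the arcs between consecutive chord-endpoints is empty, i.e. $j = s$ or chords adjacent — handled as special cases, possibly reducing $H$ to a theta graph or a $\Theta_{1,2k_1,2k_2}$-type graph where Lemma~\ref{lem-thetaplus} or Lemma~\ref{lem-triangle} applies directly). Running the Lemma~\ref{lem-thetaplus} argument: colour the vertices outside $C_2$ to force $L$ constant on $C_2$; then the already-coloured neighbour of a $C_2$-vertex that lies on $C_1$ must have received a colour from $L(C_1) = L(C_2)$, putting a used colour into the remaining list of a $C_2$-vertex and contradicting that $(C_2, L^\psi)$ was infeasible with constant $2$-list.

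**Main obstacle.** The real difficulty is the case analysis on the relative positions $i < j \le s < t$: the chords may be "nested", "crossing", or "sequential", and they may or may not share endpoints, and some arcs of $C$ may have length $1$ (a chord-endpoint adjacent to another on $C$). In several of these configurations $H$ literally is a theta-plus graph or a $\Theta$-graph already covered, so the bulk of the work is checking that every configuration either (a) contains an induced even cycle — impossible, since every induced subgraph of an infeasible pair is not indicated degree-choosable, while even cycles are by Lemma~\ref{lem-cycle} — or (b) supports the two-odd-cycles-sharing-a-$2$-vertex clash, or (c) degenerates to a previously settled graph. I would organize this by the number of $2$-vertices on each arc and handle the few genuinely small exceptional double chorded cycles by hand (checking they contain an even induced cycle or are indicated degree-choosable directly via Corollary~\ref{cor-leaf}).
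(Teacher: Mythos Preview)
Your plan has a genuine gap at its core. You claim that ``at least two of [the elementary cycles] share a degree-$2$ vertex,'' and base the contradiction on a theta-plus style clash at such a vertex. But this is false for double chorded cycles. With chords $v_1v_r$ and $v_sv_t$ (where $1<r\le s<t$, as in Definition~\ref{def-twochordcycle}), the three face cycles are $C_1$ on $\{v_1,\dots,v_r\}$, $C_3$ on $\{v_s,\dots,v_t\}$, and $C_2$ on the remaining two arcs together with both chords. Any two of these meet \emph{only} at chord endpoints, and every chord endpoint has degree $3$ (or $4$ when $r=s$). So there is no common degree-$2$ vertex to run the Lemma~\ref{lem-thetaplus} argument on; the lists forced on $C_1$ and on $C_2$ are allowed to be different $2$-sets, and nothing you wrote produces a clash. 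Your remark about ``nested'' or ``crossing'' chords is also a misreading: the definition already forces $i<j\le s<t$, so only the sequential configuration occurs (and as the paper notes afterwards, crossing chords can in fact give graphs that are \emph{not} indicated degree-choosable).

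What the paper actually does is one level deeper than your sketch. It deletes the interior of one outer arc (say $P^{s+1,t-1}$), which leaves a $\Theta_{1,2k_1,2k_2}$, and then applies Lemma~\ref{lem-triangle}---not just Lemma~\ref{lem-cycle}---to pin the lists down completely: $\{a,b\}$ on the $C_1$-arc, $\{a,c\}$ on the $C_2$-arcs, and $\{a,b,c\}$ at $v_1,v_r$. Repeating from the other side forces $\{a,d\}$ on the $C_3$-arc and $\{a,c,d\}$ at $v_s,v_t$. The contradiction then comes from a parity argument along $C_2$: since $P^{r,s}$ and $P^{t,1}$ have opposite parities (no even induced cycle), colouring one of these arcs and tracking the forced colours at $v_s$ or $v_t$ (which must lie in both neighbouring arc-lists, hence equal $a$) pushes the wrong colour into $L^\phi(v_1)$ or $L^\phi(v_r)$. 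Your outline is missing both the reduction to the theta graph via Lemma~\ref{lem-triangle} and this parity step; without them the argument does not close.
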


\begin{proof}
    Assume $H$ is a double chorded cycle 
    consisting of  a cycle $C=[v_1v_2\dots v_l]$ with   two chords $e_1=v_1v_r$ and $e_2=v_sv_t$, where $3 \le r \le s \le t-2 \le l-2$.
    
    Assume to the contrary that  $(H,L)$ is an infeasible pair. 

For   $i,j \in \{1,2,\ldots, l\}$, denote by $P^{i,j}$ the path $[v_i,v_{i+1},\dots, v_j]$ in $C$ (where indices are modulo $l$). There are three induced cycles $C_1,C_2,C_3$ in $H$, induced by   $V(P^{1,r})$, $V(P^{r,s})\cup V(P^{t,1})$ and $ V(P^{s,t})$, respectively.

    By Lemma \ref{lem-remaining}, there exists an $L$-colouring $\phi$ of $P^{s+1,t-1}$ such that $(G-P^{s+1,t-1},L^\phi)$ is  infeasible. Note that $G-P^{s+1,t-1}$ is a theta graph. 
    By Lemma \ref{lem-triangle},  we may assume that 
    $L(v)=L^\phi(v)=\{a,b\}$ for each $v\in V(C_1)\backslash\{v_1,v_r\}$,
    $L(v)=L^\phi(v)=\{a,c\}$ for each $v\in V(C_2)\backslash\{v_1,v_r, v_s, v_t\}$, $L(v_1) = L^{\phi}(v_1) = \{a,b,c\}$, $L^\phi(v_r) = \{a,b,c\} \subseteq L(v_r)$,  and $L^\phi(v_t)=L^\phi(v_s) = \{a,c\} \subseteq L(v_t), L(v_s)$.
    
Similarly, there exists an $L$-colouring $\psi$ of $P^{2,r-1}$ such that $(G-P^{2,r-1},L^\psi)$ is  infeasible.  
This implies that there is a set $A$ of two colours, such that $|A \cap \{a,c\}| =1$,
$L(v)=L^\psi(v)=\{a,c\}$ for each $v\in V(C_2)\backslash\{v_1,v_r,v_s,v_t\}$,
    $L(v)=L^\psi(v)=A$ for each $v\in V(C_3)\backslash\{v_s,v_t\}$.

As $|A \cap \{a,c \}| =1$, we may assume  that $A=\{a,d\}$ or $A=\{c,d\}$, where $d$ is either a new colour, or $d=b$. First we show that $A   = \{a,d\}$. Assume to the contrary that $A=\{c,d\}$.
Since $H$ contains no even cycle, $P^{t,1}$ and $P^{r,s}$ have different parities. Assume $P^{t,1}$ has even length. By Lemma \ref{lem-remaining}, there is an $L$-colouring $\rho$ of $P^{t+1,l}$ such that $(H-P^{t+1,l}, L^{\rho})$ is infeasible. As $P^{t,1}$ has even length,  $\rho(v_l) = \rho(v_{t+1}) \in \{a,c\}$. Then either  $L^{\rho}(v_1) \ne \{a,b\}$ or $L^{\rho}(v_t) \ne \{c,d\}$. It follows from Lemma \ref{lem-cycle} that $(H-P^{t+1,l}, L^{\rho})$ is not infeasible, a contradiction. 
    
If $r=s$, then   $L(v_1)=\{a,b,c\}$ and $L(v_r)=\{a,b,c,d\}$ ($a,b,c,d  $ are distinct colours).  
    By Lemma \ref{lem-remaining}, there is an $L$-colouring $\tau$ of $P^{t,l}$ such that $(G-V(P^{t, l}), L^{\tau})$ is infeasible. By Lemma \ref{lem-color},    $\phi(v_t) \in L(v_{t+1}) \cap L(v_{t-1})$. Hence $\tau(v_t) =a$. Since $C_2$ is an odd cycle,  this implies that $\tau(v_l)=a$. But then $L^{\tau}(v_1)=L(v_1)\backslash \{a\}\ne L^{\tau}(v)$ for   $v\in C_1 \backslash \{v_1, v_r\}$, contrary to Lemma \ref{lem-cycle}.
       
Assume $r<s$. Then $d(v_r)=d(v_s)=3$ and $L(v_r)=L(v_1)= \{a,b,c\}$ and $L(v_s) =L(v_t) = \{a,c,d\}$.
    
    By Lemma \ref{lem-remaining}, there is an $L$-colouring $\phi_1$ of $P^{r+1,s}$ such that $G-V(P^{r+1,s})$ is not indicated $L^{\phi_1}$-colourable, and an $L$-colouring ${\phi_2}$ of $P^{t, l}$ such that $G-V(P^{t,l})$ is not indicated $L^{\phi_2}$-colourable. By Lemma \ref{lem-2-2}, \begin{equation}
    \label{eq-1}
        L^{\phi_2} (v_1)=\{a,b\}=L^{\phi_1} (v_r).
    \end{equation}   
    
   This implies that $\phi_1(v_{r+1}) = \phi_2(v_l) = c$.

We claim that ${\phi_1}(v_s)=\phi_2(v_t)=a$.   By Lemma \ref{lem-color}, $\phi_1(v_s)\in L(v_{s+1})=\{a,d\}$. If $r=s-1$, $L^{\phi_1}(v_r)\ne \{a,b\}$, a  contradiction.   If $r<s-1$,   then by Lemma \ref{lem-color}, ${\phi_1}(v_s)\in L(v_{s-1}) = \{a,c\}$. So   $\phi_1(v_s) \in \{a,c\} \cap \{a,b\}$, and hence $\phi_1(v_s)=a$. 
 By symmetry, $\phi_2(v_t)=a$. 

    Since  paths $P^{r,s}$ and $P^{t,1}$ have different parities, and all vertices in $P^{r+1,s-1} \cup P^{t+1,l}$ have the same list $\{a,c\}$, this implies that  $L^{\phi_2} (v_1) \ne \ L^{\phi_1} (v_r)$, in contrary to (\ref{eq-1}).
\end{proof}

Note that in a double chorded cycle, the two chords need to be "non-crossing". For example, if $G$ is obtained from an odd cycle by duplicating one vertex, then it is a cycle with two crossing chords, i.e., two chords $e_1=v_{i_1}v_{j_1}$ and $e_2=v_{i_2}v_{j_2}$ such that $i_1 <i_2 < j_1 < j_2$. But $G$ is not indicated degree-choosable.

\begin{definition}
    \label{def-nearoddwheel}
    Assume $H$ is obtained from an odd cycle $C$ of length at least $5$ by adding a new vertex $x$ adjacent to $t$ vertices on $C$. 
    If   $t\ge 4$ or $t \in \{2,3\}$ and the neighbours of $x$ are not consecutive vertices of $C$, then we call $H$ a near odd-wheel.
\end{definition}

\begin{lem}\label{lem-odd-wheel}
   Every near odd-wheel is indicated degree-choosable.
\end{lem}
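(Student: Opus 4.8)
The plan is to separate the case $t\in\{2,3\}$ from the case $t\ge 4$.

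When $t\in\{2,3\}$ I would show that $H$ is in fact a theta-plus graph, so that Lemma~\ref{lem-thetaplus} finishes the job. If $t=2$, the non-consecutiveness hypothesis says both arcs of $C$ between the two neighbours of $x$ have length $\ge 2$, so $H=\Theta_{\ell_1,\ell_2,2}$ with $\ell_1,\ell_2\ge 2$, which is a theta-plus graph with an empty set of added edges, and Lemma~\ref{lem-thetaplus} applies. If $t=3$, write $N_H(x)=\{u_1,u_2,u_3\}$ and let $A_1,A_2,A_3$ be the three arcs of $C$ they cut out; the non-consecutiveness hypothesis forces at most one of the $A_i$ to have length $1$. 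Choose $v_2\in N_H(x)$ so that neither arc of $C$ incident with $v_2$ has length $1$ (this is possible: if an arc has length $1$, take $v_2$ to be the neighbour of $x$ that is not an endpoint of it; otherwise take any neighbour), and set $\{v_1,v_3\}=N_H(x)\setminus\{v_2\}$. Then $H$ is obtained from the theta graph with end vertices $v_2$ and $v_1$ and internally disjoint paths $P_2=$ (the arc $v_1v_2$), $P_3= v_2\,x\,v_1$, and $P_1=$ (the arc $v_2v_3$ followed by the arc $v_3v_1$), by adding the single edge $xv_3$. Since the arc $v_2v_3$ has length $\ge 2$, the vertex $v_3$ is not the first internal vertex of $P_1$ read from $v_2$, and $x$ is the unique interior vertex of $P_3$; hence $xv_3$ is a legitimate added edge in the sense of Definition~\ref{def-thetaplus}, all three path-lengths are $\ge 2$, and $H$ is a theta-plus graph. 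So Lemma~\ref{lem-thetaplus} again applies.

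For $t\ge 4$, suppose towards a contradiction that $(H,L)$ is infeasible. By Lemma~\ref{lem-con} the list $L$ is tight, so $|L(x)|=t$, $|L(u)|=3$ for every neighbour $u$ of $x$, and $|L(w)|=2$ for every vertex $w$ of $C$ not adjacent to $x$. Consider the play in which Ann colours $x$ first. Since $(H,L)$ is infeasible this opening cannot be winning for Ann, so there is a colour $\sigma\in L(x)$ with $(H-x,L^{x\to\sigma})=(C,L^{x\to\sigma})$ infeasible. As $C$ is an odd cycle, Lemma~\ref{lem-cycle} forces all lists $L^{x\to\sigma}(v)$, $v\in V(C)$, to equal a common $2$-set $M$. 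For a neighbour $u_i$ of $x$ this gives $\sigma\in L(u_i)$ (otherwise $L^{x\to\sigma}(u_i)=L(u_i)$ would have size $3\neq 2$), whence $L(u_i)=M\cup\{\sigma\}$; in particular all the sets $L(u_i)$ coincide with the single $3$-set $N:=M\cup\{\sigma\}$. But then Lemma~\ref{lem-color1}, applied to the vertex $x$ of the infeasible pair $(H,L)$, yields $L(x)\subseteq\bigcup_i L(u_i)=N$, so $t=|L(x)|\le 3$, contradicting $t\ge 4$. Hence $(H,L)$ is feasible; together with Lemma~\ref{lem-con} for non-tight degree-list assignments, this shows $H$ is indicated degree-choosable.

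The step requiring the most care is the $t=3$ subcase: one must choose the end vertices and the distinguished path $P_1$ so that the extra edge $xv_3$ obeys the restrictions in Definition~\ref{def-thetaplus}, and the correct choice depends on which of the three arcs (if any) has length $1$. By contrast the $t\ge 4$ case is short: opening with $x$ either wins outright or, via Lemma~\ref{lem-cycle}, forces every $L(u_i)$ to be the same $3$-set, which then collides with $|L(x)|=t\ge 4$ through Lemma~\ref{lem-color1}; note that this argument does not even use the hypothesis $|V(C)|\ge 5$.
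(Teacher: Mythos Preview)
Your proof is correct. For $t\in\{2,3\}$ you do exactly what the paper does---recognise $H$ as a theta-plus graph and invoke Lemma~\ref{lem-thetaplus}---only you spell out more carefully how to choose the end vertices so that the extra edge $xv_3$ lands in $V(P_1)\setminus\{u,w,v_{1,1}\}$; the paper leaves this orientation choice implicit.

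For $t\ge 4$ you take a genuinely different and more economical route. The paper observes that the subgraph induced by three consecutive arcs of $C$ together with $x$ is a double chorded cycle, and then appeals to Lemma~\ref{lem-double chorded cycle}. Your argument bypasses that lemma entirely: opening with $x$ and applying Lemma~\ref{lem-cycle} to the residual odd cycle forces every $L(u_i)$ to be the same $3$-set, and Lemma~\ref{lem-color1} then gives $|L(x)|\le 3$, contradicting $t\ge 4$. This is shorter and relies only on the elementary Lemmas~\ref{lem-cycle} and~\ref{lem-color1}, whereas the paper's route requires the comparatively involved proof of Lemma~\ref{lem-double chorded cycle}. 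The trade-off is that the paper's argument also covers $t\ge 4$ configurations where the neighbours need not be labelled cyclically, but for the statement at hand your direct argument is cleaner.
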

\begin{proof}
    Assume $(H,L)$ is infeasible, and $H$ is a near odd-wheel, consisting an odd cycle $C$ and a vertex $x$ adjacent to $t$ vertices $v_1, v_2, \ldots, v_t$ of $C$ and the $t$ vertices occurs in $C$ in this clockwise cyclic order. 
    Let $P_i=C[v_i, v_{i+1}]$ be paths in $C$ from $v_i$ to $v_{i+1}$ in clockwise direction. If $t \ge 4$, then it is easy to check that  the subgraph induced by $\cup_{1\le i\le 3}V(P_i)\cup \{x\}$ is a double chorded cycle, and hence $H$ is indicated degree-choosable.  

    If $t = 2$ or $ 3$, and  $v_i, v_{i+1}$ are not consecutive vertices of $C$, then  $G$ is a theta-plus graph with end vertices $v_i$ and $v_{i+1}$, and hence $H$ is indicated degree-choosable.  
\end{proof}

\begin{cor}
    \label{cor-forbidden}
    If $(G,L)$ is infeasible, then $G$ does not contain any of the following graphs as an induced subgraph (see Fig. \ref{fig-7} for examples of such graphs):
    \begin{enumerate}
        \item An even cycle.
        \item A theta-plus graph.
        \item A double chorded cycle.
        \item A near odd-wheel.
    \end{enumerate}
\end{cor}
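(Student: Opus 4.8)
The plan is to package the four preceding lemmas through the principle that an induced subgraph of an infeasible pair is again infeasible (for a suitable induced list). Suppose, for contradiction, that $(G,L)$ is infeasible and that $H$ is an induced subgraph of $G$ which is an even cycle, a theta-plus graph, a double chorded cycle, or a near odd-wheel. Put $X = V(G) \setminus V(H)$, so that $G - X = H$. By part (3) of Lemma \ref{lem-remaining}, there is an $L$-colouring $\phi$ of $G[X]$ for which $H$ is not indicated $L^{\phi}$-colourable.

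Next I would verify that $L^{\phi}$, restricted to $H$, is a degree-list assignment of $H$. For $v \in V(H)$ we have $L^{\phi}(v) = L(v) - \phi(N_G(v) \cap X)$, hence
\[
|L^{\phi}(v)| \;\ge\; |L(v)| - |N_G(v) \cap X| \;\ge\; d_G(v) - \bigl(d_G(v) - d_H(v)\bigr) \;=\; d_H(v),
\]
using that $L$ is a degree-list assignment of $G$ and $|N_G(v) \cap X| = d_G(v) - d_H(v)$. Thus $H$ carries a degree-list assignment under which it is not indicated colourable; equivalently, $H$ is not indicated degree-choosable.

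This contradicts Lemma \ref{lem-cycle} when $H$ is an even cycle, Lemma \ref{lem-thetaplus} when $H$ is a theta-plus graph, Lemma \ref{lem-double chorded cycle} when $H$ is a double chorded cycle, and Lemma \ref{lem-odd-wheel} when $H$ is a near odd-wheel. Therefore $G$ contains no such induced subgraph.

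I do not expect any substantive obstacle here: the corollary is a direct consequence of the four indicated-degree-choosability lemmas together with the third equivalence of Lemma \ref{lem-remaining}. The only point requiring a small amount of care is the degree bookkeeping in the second paragraph, which confirms that passing to the induced subgraph $H$ and updating its lists via $\phi$ keeps us within the degree-list regime — precisely the observation already recorded at the start of Section 5.
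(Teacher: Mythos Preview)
Your argument is correct and mirrors the paper's own reasoning: the corollary has no separate proof in the paper, being an immediate consequence of the four preceding lemmas together with the observation (recorded at the start of Section~5) that for any induced subgraph $H$ of an infeasible $(G,L)$ one obtains, via Lemma~\ref{lem-remaining}(3), a degree-list assignment $L^{\phi}$ on $H$ witnessing that $H$ is not indicated degree-choosable. Your degree bookkeeping in the second paragraph is exactly that observation made explicit.
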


\begin{lem}
    \label{lem-c+}
    If $(G,L)$ is infeasible and $G$ is obtained from an odd cycle $C=[v_1v_2\ldots v_{2k+1}]$ by adding a vertex $v$ adjacent to two consecutive or three consecutive vertices of $C$, then $(G,L) \in \mathcal{W}$. 
\end{lem}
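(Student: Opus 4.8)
The plan is to show $(G,L) \in \mathcal{W}$ by a careful case analysis, depending on whether $v$ is adjacent to two or three consecutive vertices of the odd cycle $C = [v_1 v_2 \ldots v_{2k+1}]$, and in each case to identify the pseudo-twin structure that lets us ``peel off'' an application of one of the three operations, then appeal to minimality of a hypothetical counterexample (or induct on $|V(G)|$). First I would record the degree information: if $v$ is adjacent to $v_1, v_2$ only, then $d_G(v) = 2$, $d_G(v_1) = d_G(v_2) = 3$, and all other $v_i$ have degree $2$; if $v$ is adjacent to $v_1, v_2, v_3$, then $d_G(v) = 3$, $d_G(v_1) = d_G(v_3) = 3$, $d_G(v_2) = 3$, and the remaining $v_i$ have degree $2$. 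Since $(G,L)$ is infeasible, by Lemma \ref{lem-con} the list assignment $L$ is tight, so all these degree values are exactly the list sizes.

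The two-consecutive-vertices case: here $G$ is $\Theta_{1, 2, 2k-1}$ with end vertices $v_1, v_2$ — the edge $v_1 v_2$ together with the path through $v$ of length $2$ and the path $v_1 v_{2k+1} v_{2k} \cdots v_2$ of length $2k-1$. But $2k-1$ is odd while the other two path-lengths sum with it to give the wrong parity for a theta of the form $\Theta_{1, 2k_1, 2k_2}$... in fact $\Theta_{1,2,2k-1}$ is not of that shape, so I cannot directly quote Lemma \ref{lem-triangle}. Instead I would argue directly: apply Lemma \ref{lem-cycle} to the even cycle $C' = [v_2, v_1, v_{2k+1}, \ldots, v_3]$ (of length $2k$) — wait, that cycle is even, so it cannot be the ``infeasible core'', meaning that any $L$-colouring $\phi$ of the rest must have $G - V(C')$ contributing. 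Let me instead use Lemma \ref{lem-remaining} with $X$ chosen so that $G[X]$ is the triangle-free part: take $\phi$ an $L$-colouring of $\{v\}$, so $(G - v, L^{v \to c})$ is infeasible for some $c \in L(v)$; then $G - v = C$ is an odd cycle, and by Lemma \ref{lem-cycle} all vertices of $C$ have the same list $\{a,b\}$ under $L^{v\to c}$, which forces $c \notin \{a,b\}$ and $L(v_1) = L(v_2) = \{a,b,c\}$ while $L(v_i) = \{a,b\}$ for $i \ge 3$; also $L(v) = \{c, c'\}$ for the second colour of $v$, and Lemma \ref{lem-color} applied to the edge $vv_1$ (or $vv_2$) forces $c' \in \{a,b\}$. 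One then recognizes $(G,L)$ as obtained from the odd cycle $(C_{2k+1}, L_{C_{2k+1}})$ on vertices $v_2, v_1, v_{2k+1}, \ldots$ — no, the right description is: $(G,L)$ is the vertex-sum / duplicate that matches Example \ref{example-1}(4) with a theta $\Theta_{1, 2, 2k-1}$? Since this does not literally fit, I would instead build $(G,L)$ as a vertex sum of a triangle-derived piece and an even path attached via Corollary \ref{cor-leaf}, or — cleanest — observe that $v, v_1$ (equivalently $v, v_2$) together with the structure is a duplication: check whether $(G,L) = D_{(w, c)}(G_0, L_0)$ for a suitable $G_0$ on one fewer vertex. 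Concretely, $v$ and $v_2$ have $N_G(v) = \{v_1, v_2\}$ wait they are not twins. The honest route: I would show $(G, L)$ arises from $(\Theta_{1,2,2} , \cdot) = (K_4^-, \cdot)$ type pieces by tripling, i.e., identify the triple of $2$-vertices along the long path $v_3, \ldots, v_{2k+1}$ and strip tripling operations via Lemma \ref{lem-main2}'s reverse, using the already-established fact (from minimality) that no forbidden induced subgraph of Corollary \ref{cor-forbidden} occurs.

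The three-consecutive-vertices case: here the subgraph induced by $v, v_1, v_2, v_3$ is a $K_4$ minus the edge $v_1 v_3$... actually $v_1 v_2, v_2 v_3 \in E(C)$ and $v v_1, v v_2, v v_3 \in E$, so $\{v, v_2\}$ and also $\{v_1, v_3\}$ — check: $N_G[v_1] = \{v_1, v, v_2, v_{2k+1}\}$ and $N_G[v] = \{v, v_1, v_2, v_3\}$, not equal; but $N_G(v) \cup \{v\}$ vs. the cycle $C_1 = [v_1, v, v_3, v_4, \ldots, v_{2k+1}]$ which is an odd cycle of length $2k$ — even, so again not directly the core. The key observation I expect to drive this case: $v$ together with $v_1$ and $v_3$ — since $v$ is adjacent to $v_1, v_2, v_3$ and these are consecutive, the two cycles $[v, v_1, v_2]$ (length $3$... no, $v_1 v_2$ edge, $v v_1, v v_2$ edges, triangle!) and $[v, v_2, v_3]$ (triangle) share the edge — wait, $vv_2$ is shared. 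So $v$ lies in two triangles $vv_1v_2$ and $vv_2v_3$ sharing edge $vv_2$, plus $v_1 v_2 v_3 \ldots$ the rest of $C$. I would then show $N_G[v_2]$ relative structure makes $v_2$ a pseudo-twin of something, or more likely use Corollary \ref{cor-leaf}: the vertex $v$ has list $L(v) = \{a, b, c\}$ of size $3$; colour $v$ with each colour and analyze $G - v$, which is the odd cycle $C$ with a chord $v_1 v_3$ — but an odd cycle with one chord splits into $C_{small}$ of length $3$ (triangle $v_1 v_2 v_3$) and $C_{big}$ of length $2k-1$ sharing edge $v_1 v_3$; that is the theta $\Theta_{1, 2, 2k-2}$... hmm, parities: triangle contributes a path of length $2$ between $v_1, v_3$ (through $v_2$), the long arc contributes length $2k-1$, and the chord $v_1 v_3$ is length $1$. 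So $G - v = \Theta_{1, 2, 2k-1}$ again. This is exactly the object from the two-vertices case, so the two cases merge: in both, after colouring $v$ we reduce to understanding $(\Theta_{1,2,2k-1}, L')$ where by the degree/tightness constraints $L'$ is near-uniform, and I claim $(\Theta_{1,2,2k-1}, L') \in \mathcal{W}$ can be established by an auxiliary lemma-style argument (repeated Corollary \ref{cor-leaf} along the path of $2$-vertices to reduce to a triangle, then Example \ref{example-1}(1)(2)) — or, since we only need $(G,L) \in \mathcal{W}$ and not a classification, directly exhibit the construction sequence.

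\textbf{Main obstacle.} The hard part will be handling $\Theta_{1,2,2k-1}$ (or equivalently the odd cycle with one short chord), because it is \emph{not} of the form $\Theta_{1, 2k_1, 2k_2}$ covered by Lemma \ref{lem-triangle} — one of its three path-lengths is odd-but-not-$1$ — so none of the packaged results apply verbatim, and I will need a small dedicated argument. I expect the resolution to be: apply Lemma \ref{lem-remaining} with $X$ the open long arc of $2$-vertices (strip them via Corollary \ref{cor-leaf}), reducing to the triangle $[v_1, v_2, v_3]$ with $v$ attached (two/three edges), determine $L$ completely on the remaining $4$–$5$ vertices from tightness plus Lemma \ref{lem-color1}, recognize this small configuration explicitly as a member of $\mathcal{W}$ obtained by a duplication from $(C_{2k+1}, L_{C_{2k+1}})$ or by vertex-sum of a $K_3$-piece with an even path, and finally invoke Lemmas \ref{lem-main1}, \ref{lem-main2}, or \ref{lem-vs} to climb back up. A secondary nuisance is keeping the colour-bookkeeping consistent (which pair of the three colours $\{a,b,c\}$ appears on which arc), which I would manage by fixing notation once at the start: list $\{a,b\}$ on the first odd cycle through the core, $\{a,c\}$ on the second, with $L(v_1) = L(v_2) = \{a,b,c\}$ (or the appropriate root clique), exactly paralleling the proof of Lemma \ref{lem-triangle}.
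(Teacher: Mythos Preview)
Your opening move is exactly right: pick $c \in L(v)$ so that $(G-v, L^{v\to c})$ is infeasible, note $G-v = C$ is the odd cycle, and apply Lemma~\ref{lem-cycle} to force $L^{v\to c}(v_i)=\{a,b\}$ for every $i$; then Lemma~\ref{lem-color} gives $L(v_j)=\{a,b,c\}$ for each neighbour $v_j$ of $v$. This is precisely how the paper begins. But from here you go astray because of two small structural errors that make the problem look much harder than it is.

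\textbf{First error (two-consecutive case): the long arc has length $2k$, not $2k-1$.} The path $v_1 v_{2k+1} v_{2k} \cdots v_3 v_2$ around the cycle avoiding the edge $v_1v_2$ has $2k$ edges, not $2k-1$. Hence $G = \Theta_{1,2,2k}$, which \emph{is} of the form $\Theta_{1,2k_1,2k_2}$, and Lemma~\ref{lem-triangle} applies verbatim. Your entire ``main obstacle'' --- handling a theta with an odd-but-not-$1$ path --- does not exist. (Alternatively, once you know $L(v_1)=L(v_2)=\{a,b,c\}$, $L(v_i)=\{a,b\}$ for $i\ge 3$, and $L(v)=\{c,c'\}$ with $c'\in\{a,b\}$, you have literally written down the list from Example~\ref{example-1}(4), so $(G,L)\in\mathcal{W}$.)

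\textbf{Second error (three-consecutive case): $G-v$ has no chord $v_1v_3$.} The vertex $v$ is the \emph{extra} vertex; removing it returns exactly the odd cycle $C$, with no chord. More importantly, you overlook the decisive observation: when $v$ is adjacent to $v_1,v_2,v_3$, we have $N_G[v] = \{v,v_1,v_2,v_3\} = N_G[v_2]$, so $v$ and $v_2$ are adjacent twins. By symmetry (swap $v\leftrightarrow v_2$ and rerun your first paragraph), $L(v)=\{a,b,c\}$ as well. Now $(G,L) = D_{(v_2,c)}\bigl(C,\, L^{v\to c}\bigr)$, and $(C, L^{v\to c})$ is the odd cycle with constant list $\{a,b\}$, which lies in $\mathcal{W}$. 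Done.

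The paper's proof is five lines because it exploits exactly these two shortcuts; once you fix the miscount and spot the twin, your proposal collapses to it.
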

\begin{proof}
    Assume $G$ is obtained from $C$ by adding a vertex $v$ adjacent to $v_1,v_2,v_3$, and $(G,L)$ is infeasible. By Lemma \ref{lem-remaining}, there is a colour $c \in L(v)$ such that $(G-v, L^{v \to c})$ is infeasible. By Lemma \ref{lem-cycle}, we may assume that $L^{v \to c}(v_i) = \{a,b\}$ for all $v_i$. By Lemma \ref{lem-color}, $L(v_1)=L(v_2)=L(v_3) = \{a,b,c\}$. By symmetry, we also have $L(v)=\{a,b,c\}$. Hence $(G,L) \in \mathcal{W}$. The case $v$ is adjacent to two consecutive vertices of $C$ is proved similarly.
\end{proof}

\begin{cor}
    \label{cor-c+}
    If $(G,L)$ is infeasible and $G-v$ is an odd cycle for some vertex $v$ with $d_G(v) \ge 2$, then $(G,L) \in \mathcal{W}$.
\end{cor}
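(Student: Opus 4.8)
The plan is to reduce the general statement to Lemma \ref{lem-c+} by analyzing the structure of the neighbourhood $N_G(v)$ inside the odd cycle $C = G - v$. Write $C = [v_1 v_2 \cdots v_{2k+1}]$ and let $S = N_G(v) \cap V(C)$, with $t = |S| = d_G(v) \ge 2$. If $t \ge 4$, then $G$ is a near odd-wheel (Definition \ref{def-nearoddwheel}), which is indicated degree-choosable by Lemma \ref{lem-odd-wheel}; this contradicts the assumption that $(G,L)$ is infeasible, so this case cannot occur. If $t \in \{2,3\}$ and the vertices of $S$ are \emph{not} consecutive on $C$, then again $G$ is a near odd-wheel and the same contradiction applies. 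Hence the only surviving case is $t \in \{2,3\}$ with $S$ consisting of two or three consecutive vertices of $C$ — which is exactly the hypothesis of Lemma \ref{lem-c+}, and that lemma gives $(G,L) \in \mathcal{W}$.

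First I would state the case split on $t = d_G(v)$ and invoke Corollary \ref{cor-forbidden}(4) (equivalently Lemma \ref{lem-odd-wheel}) to eliminate every configuration except "two or three consecutive neighbours on $C$''. Since $(G,L)$ is infeasible, $G$ contains no near odd-wheel as an induced subgraph; but whenever $t \ge 4$, or $t \in \{2,3\}$ with non-consecutive neighbours, the graph $G$ \emph{is} a near odd-wheel (with the odd cycle $C$ of length $2k+1 \ge 5$; note if $2k+1 = 3$ then $C$ is a triangle and adding a vertex adjacent to $\ge 2$ of its vertices cannot keep $L$ tight without creating adjacent twins, a case one should either include in the degenerate discussion or rule out via $|L(v)| = d_G(v)$ and Lemma \ref{lem-cycle} — worth a sentence). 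Then I would simply apply Lemma \ref{lem-c+} to conclude.

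The main obstacle I anticipate is bookkeeping the small/degenerate cases cleanly: one must make sure the odd cycle $C = G - v$ genuinely has length at least $5$ (so Definition \ref{def-nearoddwheel} applies), and handle $C = C_3$ separately if it arises. When $|V(C)| = 3$ and $v$ is adjacent to two or three vertices of $C$, $G$ is a small graph ($K_4$ minus an edge, or $K_4$, or a triangle with a pendant-ish attachment) and membership in $\mathcal{W}$ must be checked by hand using the duplication operation and Corollary \ref{cor-clique}; this is routine but should not be skipped. Everything else is a direct appeal to results already proved: Lemma \ref{lem-odd-wheel} to force the "consecutive neighbours'' configuration, and Lemma \ref{lem-c+} to finish.
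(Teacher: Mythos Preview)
Your approach is correct and matches the paper's implicit argument (the corollary is stated without proof, as it follows directly from Lemma~\ref{lem-odd-wheel} and Lemma~\ref{lem-c+} via exactly the case split you describe). Your separate handling of $C_3$ is unnecessary overcaution: Lemma~\ref{lem-c+} applies to \emph{any} odd cycle, and in $C_3$ any two or three vertices are automatically consecutive, so no hand-checking is needed (and your parenthetical about adjacent twins is not quite right, since when $v$ is adjacent to only two vertices of $C_3$, the vertex $v$ and the remaining cycle vertex have different closed neighbourhoods).
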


\section{All infeasible pairs are constructible}

This section proves the following theorem, which shows that all infeasible pairs are constructible.

\begin{theorem}
    \label{thm-nic}
    A pair $(G,L)$ is infeasible if and only if $(G,L) \in \mathcal{W}$. 
\end{theorem}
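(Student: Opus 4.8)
The forward implication is already established: $(K_1,L_\emptyset)$ is infeasible, and by Lemmas~\ref{lem-main1}, \ref{lem-main2} and \ref{lem-vs} the three operations preserve infeasibility, so every member of $\mathcal{W}$ is infeasible. For the converse I would argue by contradiction, choosing an infeasible pair $(G,L)\notin\mathcal{W}$ with $|V(G)|$ minimum. Then $G$ is connected, $|V(G)|\ge 2$ (the only connected infeasible pair on one vertex is $(K_1,L_\emptyset)\in\mathcal{W}$), and $L$ is tight by Lemma~\ref{lem-con}. Minimality is used in two ways. First, by Lemma~\ref{lem-remaining}, for every proper $X\subsetneq V(G)$ there is an $L$-colouring $\phi$ of $G-X$ with $(G[X],L^\phi)$ infeasible; since $L^\phi$ is a degree-list assignment of $G[X]$ and $|X|<|V(G)|$, minimality gives $(G[X],L^\phi)\in\mathcal{W}$, and in particular every proper induced subgraph of $G$ fails to be indicated degree-choosable. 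Second, by Corollary~\ref{cor-forbidden}, $G$ contains no even cycle, no theta-plus graph, no double chorded cycle, and no near odd-wheel as an induced subgraph.

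The plan is then to strip off reducible local structure and show that what remains cannot occur. If $G$ has a pair of adjacent twins, Lemma~\ref{lem-twins} writes $(G,L)=D_{v,c}(G',L')$ with $(G',L')$ infeasible on fewer vertices, hence $(G',L')\in\mathcal{W}$ by minimality and $(G,L)\in\mathcal{W}$, a contradiction; so $G$ is twin-free. If $G$ contains a triple $(v_1,v_2,v_3)$ of $2$-vertices, then contracting the path $[v_1,v_2,v_3]$ to a single $2$-vertex $v$ gives $(G',L')$ with $T_v(G',L')=(G,L)$; one verifies (by the argument dual to Lemma~\ref{lem-main2}, using Corollary~\ref{cor-leaf} to delete the two end-vertices of the path) that $(G',L')$ is again infeasible, so $(G',L')\in\mathcal{W}$ and $(G,L)\in\mathcal{W}$, a contradiction; so $G$ has no triple of $2$-vertices. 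Finally, if $G$ has a cut vertex $v$ with sides $G_1,G_2$, then choosing via the displayed property (taking $X$ to be the vertex set of one side) an $L$-colouring of the other side that makes the induced pair on $G_1$ infeasible forces a partition $L(v)=L_1(v)\cup L_2(v)$ into colour sets of the correct sizes for which both $(G_i,L_i)$ are infeasible, whence $(G,L)=(G_1,L_1)\oplus_{(v,v)}(G_2,L_2)\in\mathcal{W}$, a contradiction. So $G$ is $2$-connected, twin-free, and has no triple of $2$-vertices.

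Now split on whether $G$ has a clique cut. If $G$ has none, it is an expanded block, hence $2$-connected with every induced cycle odd; using Corollary~\ref{cor-forbidden} (any way of attaching an extra vertex or an extra path to a long induced odd cycle, other than ``blowing up'' a cycle vertex, creates one of the forbidden subgraphs or a clique cut) one shows $G$ is a complete graph or a blow-up $C[p]$ of an odd cycle of length at least $5$; twin-freeness forces $p\equiv 1$, so $G$ is $K_2$ or an odd cycle $C_{2k+1}$ with $k\ge 2$, and in either case the tight infeasible $L$ is forced (directly for $K_2$, by Lemma~\ref{lem-cycle} for $C_{2k+1}$) to be the list assignment of Example~\ref{example-1}, so $(G,L)\in\mathcal{W}$, a contradiction. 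It remains to treat the case that $G$ has a clique cut $K$ with $|V(K)|\ge 2$, which is the heart of the proof. After replacing $K$ by a minimal (elementary) clique cut and choosing a component $C$ of $G-V(K)$ with $G_1:=G[V(C)\cup V(K)]$ as small as possible, the accumulated constraints --- twin-freeness, no triple of $2$-vertices, and the four forbidden induced subgraphs --- should pin $G_1$ down to one of a short list of explicitly understood pieces (a complete graph; a blow-up of an odd cycle having $K$ as a root clique; a graph $\Theta_{1,2k_1,2k_2}$; or an odd cycle with an added vertex adjacent to two or three consecutive vertices), for each of which Lemmas~\ref{lem-c+}, \ref{lem-triangle} and Corollaries~\ref{cor-c+}, \ref{cor-clique} identify the forced list data and membership in $\mathcal{W}$; and the attachment of $G_1$ to the rest of $G$ along $K$ is then realised by a duplication (undoing the blow-up of an edge) followed by a vertex sum, so $(G,L)$ is assembled from strictly smaller members of $\mathcal{W}$, giving the final contradiction.

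The main obstacle is precisely this last case. Vertex sum, the only operation that merges two graphs, glues along a single vertex, so one cannot decompose along a clique cut of size at least $2$ directly; one must instead prove that, for a minimal counterexample, the local picture near an elementary clique cut is rigid enough that the side $G_1$ is one of the constructible configurations above, and then check in each case that re-attaching $G_1$ along $K$ is genuinely a composition of duplications, triplings, and vertex sums. Making this enumeration exhaustive --- and controlling, throughout, which colour the adversary can be forced to use on the boundary clique $K$ --- is where the real work lies; the rest is bookkeeping around minimality and the forbidden-subgraph lemmas.
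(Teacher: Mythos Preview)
Your overall shape (minimality, strip off twins and triples, reduce to $2$-connected, then analyse) is right, but there is a real gap in how you handle the cut-vertex step, and your plan for the $2$-connected case is a different route that, as stated, does not close.

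For the cut-vertex step you write that colouring each side in turn ``forces a partition $L(v)=L_1(v)\cup L_2(v)$''. It does not. Colouring $G_2-v$ so that $(G_1,L_1)$ is infeasible fixes $L_1(v)=L(v)\setminus\phi(N_{G_2}(v))$, and colouring $G_1-v$ so that $(G_2,L_2)$ is infeasible fixes $L_2(v)$ from a \emph{different} colouring $\psi$; nothing guarantees $L_1(v)\cap L_2(v)=\emptyset$, so you cannot write $(G,L)$ as a vertex sum. The paper does not argue this way. It first observes the key structural fact (Lemma~\ref{lem-p1}): every $2$-connected pair in $\mathcal{W}$ contains a pair of adjacent pseudo-twins. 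Applied to the leaf block $(G_1,L_1)\in\mathcal{W}$, this forces the cut vertex $v$ to sit inside either an adjacent-twin pair or a triple of $2$-vertices of $(G_1,L_1)$ (otherwise $G$ itself would have such a structure), and from that one reconstructs $(G,L)$ explicitly as a composition of the three operations. The disjointness of the two colour sets at $v$ comes out of this reconstruction, not from a generic ``colour the other side'' argument.

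For the $2$-connected case your clique-cut dichotomy is not how the paper proceeds, and as you present it, it risks circularity: the claim that a $2$-connected graph with no clique cut and the four forbidden subgraphs must be complete or a blow-up of an odd cycle is essentially the expanded-block classification, which in the paper is derived \emph{from} Theorem~\ref{thm-nic}, not used to prove it. The paper's engine is again Lemma~\ref{lem-p1} together with an ear-decomposition lemma (Lemma~\ref{lem-ear}): either some vertex $w$ or some proper ear $P$ can be removed leaving a $2$-connected graph, and then by minimality the smaller pair lies in $\mathcal{W}$ and hence contains adjacent pseudo-twins; a case analysis shows that in the presence of those pseudo-twins the removed piece necessarily creates one of the forbidden induced subgraphs of Corollary~\ref{cor-forbidden} (an even cycle, theta-plus, double chorded cycle, or near odd-wheel) unless $G$ itself already had twins or a triple of $2$-vertices. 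That contradiction finishes the proof. The missing idea in your proposal is precisely this use of Lemma~\ref{lem-p1} on the one-step-smaller $2$-connected graph; without it, the clique-cut analysis you sketch has no leverage to control what sits on either side of $K$.
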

\begin{proof}
    We have already shown that if $(G,L) \in \mathcal{W}$, then $(G,L)$ is infeasible. 
    It remains to show that the converse is true.
    
    Assume the converse is not true, and $(G,L)$ is a counterexample with $|V(G)|$ minimum. It follows from Lemma \ref{lem-2-2}, Lemma \ref{lem-twins} that $G$ has no adjacent twins, no triple of 2-vertices. Hence $G$ has no adjacent pseudo-twins. It is also obvious that $G \ne K_2$. Hence $|V(G)| \ge 3$.

 We shall derive a sequence of properties of $G$, that lead to a final contradiction.
    
 \begin{lem}
     \label{lem-no-cut}
     $G$ is 2-connected.
 \end{lem}
    \begin{proof} 
        Assume to the contrary that $G$ has a cut-vertex $v$. Let $G_1,G_2$ be  two connected induced subgraphs   of $G$ such that $V(G_1) \cap V(G_2) = \{v\}$ and $V(G_1) \cup V(G_2) = V(G)$. We choose $v$ so that $G_1$ has no cut-vertex.  %We rename the vertex $v$ in $G_i$ by $v_i$ for $i=1,2$. 

        By Lemma  \ref{lem-remaining}, for $i=1,2$, there is  an $L$-colouring $\phi_i$ of $G-G_i$ such that   $(G_i, L^{\phi_i})$ is infeasible. Let $L_i=L^{\phi_i}$.
        
         By the minimality of $(G,L)$, we know that  $(G_i, L_i) \in \mathcal{W}$. Since $G_1$ has no cut vertex, we conclude that either $G_1$ has a pair of adjacent twins $u_1u_2$ with $L_1(u_1)= L_1(u_2)$, or $G_1$ has an induced path $[u_1,u_2,u_3]$ with  $d_{G_1}(u_i)=2$ and   $L_1(u_i) = A$ for a set $A$ of 2 colours.

        Assume first that $u_1u_2$ is a pair of adjacent twins in $G_1$ with $L_1(u_1)= L_1(u_2)$. Since $G$ has no  adjacent twins,  $v \in \{u_1,u_2\}$, say $v=u_2$.

        By Lemma \ref{lem-remaining}, there is a colour $c \in L(u_1)$ 
        such that $(G-u_1, L^{u_1 \to c})$ is infeasible. Now $v$ is a cut-vertex of $G-u_1$  or $G_1$ contains exactly one edge $u_1u_2$. Hence $(G-u_1, L^{u_1 \to c}) \in \mathcal{W}$.
        Thus there are list assignments $L'_1$ of $G_1-u_1$ and $L'_2$ of $G_2$ such that $(G_1-u_1,L'_1)$ and $(G_2, L'_2)$ are infeasible (and hence in $\mathcal{W}$) and 
        $(G-u_1, L^{u_1 \to c})=(G_1-u_1, L'_1) \oplus_{u_2, v} (G_2, L'_2)$. Note that if $G_1$ contains exactly one edge $u_1u_2$, then $(G_1-u_1, L'_1)=(\{u_2\},L_\emptyset)$ and $L'_2=L_2$, that is, we consider $(G-u_1, L^{u_1 \to c})$ as the vertex sum of $(\{u_2\},L_\emptyset)$ and $(G_2, L_2)$. Then 
         $D_{u_2,c}(G_1-u_1, L'_1) \in \mathcal{W}$ and it is easy to check that $(G,L) = D_{u_2,c}(G_1-u_1, L'_1) \oplus_{u_2, v} (G_2, L'_2)$. Hence $(G,L) \in \mathcal{W}$, a contradiction. 

         Next we assume that $G_1$ has an induced path $[u_1,u_2,u_3]$ with $d_{G_1}(u_i)=2$ and  $L_1(u_i) = A$ for a set $A$ of 2 colours. Since $G$ has no such path, we know that $v \in \{u_1, u_2, u_3\}$. Let $C$ be an induced cycle in $G_1$ containing $u_1u_2u_3$. 
         By Lemma \ref{lem-remaining}, $C$ is an odd cycle and there is an $L$-colouring $\theta$ of $X=V(G_1)-V(C)$ such that 
         $(G-X, L^{\theta}) $ is infeasible. By Lemma \ref{lem-cycle}, all vertices  $u \in V(C)-\{v\}$ have $L^{\theta}(u)=A$.  By Lemma \ref{lem-remaining} again, there is an $L^{\theta}$-colouring $\psi$ of $V(C)-\{v\}$ such that $(G-X-(V(C)-\{v\}), L^{\psi})$ is infeasible. 
         Note that $L^{\psi}(v) =L(v) -A$, and $L^{\psi} = L_2$.
         Therefore $(G,L) = (G_1,L_1) \oplus_{u_i, v} (G_2, L_2)$.
         So $(G,L) \in \mathcal{W}$, a contradiction. This completes the proof of Lemma \ref{lem-no-cut}.
    \end{proof}
 
           %\begin{lem}
 %   \label{lem-2con}
 %       For any vertex $x$ in $G$, if there exist vertices $y$ and $z$ such that $N_G(x)\subseteq N_G(y)$ and $N_G(x)\subseteq N_G(z)$, then $G-x$ is $2$-connected.
 %   \end{lem}

 %   \begin{proof}
 %       Assume that $G-x$ is not $2$-connected and let $B_1, B_2,\dots, B_k$ be the leaf-blocks of $G-x$. Since $G$ is $2$-connected, $N_G(x)\cap B_i\ne\emptyset$ for every $i\in[k]$. Thus $N_G(y)\cap B_i\ne\emptyset$, i. e. $N_{G-x}(y)\cap B_i\ne\emptyset$ for every $i\in[k]$. It implies that $y$ is the only cut-vertex in $G-x$.  For the same argument, $N_{G-x}(z)\cap B_i\ne\emptyset$ for every $i\in[k]$ and $z$ is the only cut-vertex in $G-x$, a contradiction.
 %   \end{proof}

    %Assume $G_1$ and $G_2$ are induced subgraphs of $G$ such that  $V(G_1) \cup V(G_2)=V(G)$ and $V(G_1) \cap V(G_2) = \{v\}$.

%We may assume that $G_1$ has no cut-vertex. Since $G$ has no adjacent twins, if $G_1$ has a pair of adjacent twins, then $v$ is one of the twins. the adjacent twins is $\{u,v\}$ 

%Let $u$ be a non-cut vertex in $G_1-v$, and let $c \in L(u)$ be a colour such that $(G-u, L^{u \to c})$ is infeasible. If $G_1$ is a copy of $K_2$ with vertices $u,v$, then $(G,L)=(G_1, L_1) \oplus (G_2,L_2)$, where $L_1(u)=L_1(v)=\{c\}$ and $L_2 = L^{u \to c}$. Hence $(G,L) \in \mathcal{W}$, a contradiction. By induction hypothesis, $(G-u, L^{u \to c}) \in \mathcal{W}$, and hence ...

\begin{lem}
    \label{lem-2conn-del}
    Assume $G$ is 2-connected and $X$ is a subset of vertices that induces a connected subgraph. If any two vertices in $N_G(X)-X$ are contained in a cycle in $G-X$, then $G-X$ is 2-connected.
\end{lem}
\begin{proof}
    Assume to the contrary that any two vertices in $N_G(X)-X$ are contained in a cycle in $G-X$ and $G-X$ is not 2-connected. Let $v$ be a cut-vertex that separates $G-X$ into 
    two subgraphs $G_1$ and $G_2$ (i.e., $V(G_1) \cap V(G_2) = \{v\}$).
    Since any two vertices in $N_G(X)-X$ are contained in a cycle in $G-X$, we conclude that $N_G(X)-X$ are contained in $V(G_i)$ for some $i=1,2$. Then $v$ is also a cut-vertex of $G$, a contradiction.
\end{proof}

The converse of Lemma \ref{lem-2conn-del} is obviously true: If $G-X$ is 2-connected, then any two vertices in $N_G(X)-X$ are contained in a cycle in $G-X$. But we shall not need this.

An {\em open ear  } in a graph $G$ is  a path connecting two distinct vertices of degree at least 3 and whose interior vertices (if any) are degree 2 vertices.  An open ear is {\em proper} if it contains at least one interior vertex.  It was proved by Whitney \cite{whitney} that if $G$ is 2-connected and $G$ is not a cycle, then $G$ has an open ear whose deletion results in a 2-connected graph. 
 Given an ear $P$, we denote by $I(P)$ the set of interior vertices of $P$ and $Z(P)$ the set of the two end vertices of $P$, and let $V(P)=Z(P) \cup I(P)$. 

\begin{lem}
    \label{lem-ear}
    If $G$ is 2-connected and $G$ is not a cycle, then $G$ has a vertex $v$ such that $G-v$ is 2-connected  or a proper ear $P$ such that  $G-I(P)$ is 2-connected.
\end{lem}
\begin{proof}
   The proof is by  induction on the number of edges in $G$. If $G$ is 2-connected and $|E(G)| \le 4$ then $G$ is a cycle, there is nothing to prove.
    
    Assume $G$ is 2-connected with $|E(G)| \ge 5$, $G$ is not a cycle and the lemma holds for graphs with less edges. By the ear-decomposition theorem, $G$ has either a proper ear $P$  such that   $G-I(P)$ is 2-connected, or $G$ has an edge $e=uv$ such that $G-e$ is 2-connected. In the former case, we are done. In the later case, let $G'=G-e$. If $G'$ is a cycle, then the end vertices of $e$ are connected by two proper ears $P_1,P_2$, and  $G-I(P_1)$ is a cycle, and we are done. Assume $G'$ is not a cycle. 
    By induction hypothesis, $G'$ has a vertex $x$ such that $G'-x$ is 2-connected or a proper ear $P$  such that $G'-I(P)$ is 2-connected. In the former case, $G-x$ is 2-connected and we are done. In the later case, we may assume that $|I(P)| \ge 2$.  If $u,v \notin I(P)$, then $P$ is a proper ear in $G$ and we are done. 
    
    By symmetry, assume $v \in I(P)$.
    If $u \in V(P)$, then the part of $P$ connecting $v$ and $u$ is a proper ear $P'$ of $G$.    Let $C$ be the cycle in $G$  induced by $V(P')$. Let $C'$ be any cycle containing $v$ and a vertex $w \notin V(C)$. Then  either $C'$ or $C \Delta C'$ is a cycle in $G-I(P')$ containing $Z(P')$, where $C \Delta C'$ denotes the symmetric difference of $C$ and $C'$. 
    By Lemma \ref{lem-2conn-del}, $G-I(P')$ is 2-connected and we are done. 

    Assume $u \notin V(P)$.  
    Then $G$ has  a proper ear $P'$  contained in $P$ with $v \in Z(P')$. Assume $Z(P)=
    \{v',v''\}$ and $Z(P')=\{v', v\}$. If $G-I(P')$ is 2-connected, then we are done. 
    Assume that $G-I(P')$ has a cut-vertex $w$.  Since $G'-I(P)$ is 2-connected, $G-I(P) -w $ is a connected subgraph of $G-I(P')-w$ containing $v'$. Let $Q$ be the connected component of $G-I(P')-w$ containing $G-I(P) -w $. 
    If $w \in I(P)$, then $Q$ contains both $v$ and $v''$. Hence $Q$ contains   all vertices of $G-I(P')-w$, a contradiction. Assume $w \notin I(P)$.
    If $u \ne w$, then $u$ is also contained in $Q$ and hence $Q$ contains edge $e=uv$  and therefore all vertices of $G-I(P')-w$, a contradiction. So $u=w \notin I(P)$.
    As $u \notin V(P)$, we conclude that $w\ne v''$. Thus $v''$ is contained in $Q$ and this implies that  $V(P)-I(P') \subseteq Q$ and hence $Q$ contains all vertices of $G-I(P')-w$, a contradiction. 
\end{proof}

For the remainder of the proof of Theorem \ref{thm-nic}, we consider two cases.

\bigskip
\noindent
{\bf Case 1} $G$ has a proper ear $P$   such that $G-I(P)$ is 2-connected.

 Since $G$ has no triple of 2-vertices, $I(P)$ has either one vertex $w_1$ or two adjacent vertices $w_1, w_2$.
 By the minimality of $(G,L)$ and Lemma \ref{lem-p1},  $G'=G-I(P)$   contains a pair of adjacent pseudo-twins $\{u, v\}$ of index $k$ for some $k \ge 0$. It is easy to verify that $G'$ is not a cycle, and hence $d_{G'}(u) = d_{G'}(v) \ge 3$. 

Assume first that $k \ge 1$ and $C=[v,u,v_1,v_2, \ldots, v_{2m+1}]$ has length at least 5, and $d_{G'}(v_i) =2$ for $i=1,2,\ldots, 2m+1$. As $d_{G'}(u) = d_{G'}(v) \ge 3$, edge $uv$ is contained in at least one other odd cycle $C'$ (which might be a triangle).
Since $G$ contains no triple of 2-vertices, some $v_i$ is adjacent to $I(P)$. Note that $|N_G(I(P)) - I(P)|=2 $. Assume $|N_G(I(P)) \cap V(C)| =2 $. If the two
vertices in $N_G(I(P)) \cap V(C)$ are two consecutive vertices of $C$, then the subgraph of $G$ induced by $I(P) \cup V(C) \cup V(C')$ is a double chorded cycle, see Fig. \ref{fig:adjacent pseudo-twins of $G-I(P)$}(a).  If the two
vertices $x,y$ in $N_G(I(P)) \cap V(C)$ are non-consecutive vertices of $C$, then 
the subgraph of $G$ induced by $I(P) \cup V(C)$ is a theta-plus graph with end vertices $x$ and $y$, see Fig. \ref{fig:adjacent pseudo-twins of $G-I(P)$}(b).

 %==========================================  
 \begin{figure}
\centering
\begin{tikzpicture}[scale=1.5, every node/.style={scale=0.8}, baseline] 
    % 定义正五边形的边长
  \def\side{0.4}
     \begin{scope}[xshift=0cm] 
    % 计算正五边形的五个顶点
    \foreach \i in {0,1,2,3,4} {
        \coordinate (P\i) at ({\side * cos(90 + \i*72)}, {\side * sin(90 + \i*72)});
       
    }
    
 \coordinate (W1) at ($(P0)!0.5!(P1) + (-0.3,0.35)$);
  \coordinate (C1) at ($(P2)!0.5!(P3) + (0,-0.4)$);
 \draw (P0) -- (P1) -- (P2) -- (P3) -- (P4) -- cycle;
\draw[red] (P0)--(W1)--(P1);
\draw[blue] (P2)--(C1)--(P3);

 \foreach \i in {0,1,2,3,4}{
 \ifnum\i=2
     \filldraw[fill=white,draw=black] (P\i) circle (1pt);  
            \else
             \ifnum\i=3
    \filldraw[fill=white,draw=black] (P\i) circle (1pt);  
             \else
      \filldraw[fill=black,draw=black] (P\i) circle (1pt);    
             \fi 
             \fi
  % \node at ($(P\i) + (0.3, 0.3)$) {\scriptsize $v_{\i}$};

 }
  \filldraw[fill=black,draw=black] (W1) circle (1pt); 
\filldraw[fill=white,draw=black] (C1) circle (1pt);

\node[above] at (W1){\scriptsize $w_1$};
\node[left] at (P1){\scriptsize $v_1$};
\node[above] at (P0){\scriptsize $v_2$};
\node[right] at (P4){\scriptsize $v_3$};
\node[left] at (P2){\scriptsize $u$};
\node[above=0.28] at (C1){\scriptsize \bl{$C'$}};
\node[right] at (P3){\scriptsize$v$};
\node[below=0.4] at (P0){\scriptsize$C$};
\node[right=0.28] at (W1){\scriptsize \red{$P$}};
 % 在 scope 的右上方添加名字
        \node[below=2] at (P0) {\scriptsize (a) Double chorded cycle}; 
     \end{scope}

       \begin{scope}[xshift=2.6cm, yshift=-0.2cm] 
    % 计算正五边形的五个顶点
    \foreach \i in {0,1,2,3,4} {
        \coordinate (P\i) at ({\side * cos(90 + \i*72)}, {\side * sin(90 + \i*72)});
       
    }
    
 \coordinate (W1) at ($(P0)!0.5!(P1) + (-0.2,0.35)$);
  \coordinate (W2) at ($(P0)!0.5!(P4) + (0.2,0.35)$);
  \coordinate (C1) at ($(P2)!0.5!(P3) + (0,-0.4)$);
 \draw (P0) -- (P1) -- (P2) -- (P3) -- (P4) -- cycle;
\draw[red] (P1)--(W1)--(W2)--(P4);
\draw[white] (P2)--(C1)--(P3);

 \foreach \i in {0,1,2,3,4}{
 \ifnum\i=2
     \filldraw[fill=white,draw=black] (P\i) circle (1pt);  
            \else
             \ifnum\i=3
    \filldraw[fill=white,draw=black] (P\i) circle (1pt);  
             \else
      \filldraw[fill=black,draw=black] (P\i) circle (1pt);    
             \fi 
             \fi
  % \node at ($(P\i) + (0.3, 0.3)$) {\scriptsize $v_{\i}$};

 }
  \filldraw[fill=black,draw=black] (W1) circle (1pt); 
   \filldraw[fill=black,draw=black] (W2) circle (1pt); 
%\filldraw[fill=white,draw=black] (C1) circle (1pt);  

\node[above] at (W1){\scriptsize $w_1$};
\node[above] at (W2){\scriptsize $w_2$};
\node[left] at (P1){\scriptsize $v_1$};
\node[above] at (P0){\scriptsize $v_2$};
\node[right] at (P4){\scriptsize $v_3$};
\node[left] at (P2){\scriptsize $u$};
\node[right] at (P3){\scriptsize$v$};
\node[below=0.4] at (P0){\scriptsize$C$};
\node[above=0.28] at (P0){\scriptsize \red{$P$}};

\node[below=1.7] at (P0){\scriptsize (b) Theta-plus graph};
     \end{scope}

          \begin{scope}[xshift=5.3cm, yshift=-0.2cm] 
    % 计算正五边形的五个顶点
    \foreach \i in {0,1,2,3,4} {
        \coordinate (P\i) at ({\side * cos(90 + \i*72)}, {\side * sin(90 + \i*72)});
       
    }
    
 \coordinate (W1) at ($(P0) + (0,0.35)$);
  \coordinate (W2) at ($(P1) + (-0.2,0.3)$);
  \coordinate (Y) at ($(P2) + (-0.15,-0.15)$);
 \draw (P0) -- (P1) -- (P2) -- (P3) -- (P4) -- cycle;
\draw[red] (P0)--(W1)--(W2);
\draw[blue] (Y)--(P2);
\draw[blue, smooth] (Y) .. controls ($(P1) + (-0.7,0)$) and ($(P2) + (-0.3,0.15)$) .. (W2);

 \foreach \i in {0,1,2,3,4}{
 \ifnum\i=2
     \filldraw[fill=white,draw=black] (P\i) circle (1pt);  
            \else
             \ifnum\i=3
    \filldraw[fill=white,draw=black] (P\i) circle (1pt);  
             \else
      \filldraw[fill=black,draw=black] (P\i) circle (1pt);    
             \fi 
             \fi
  % \node at ($(P\i) + (0.3, 0.3)$) {\scriptsize $v_{\i}$};

 }
  \filldraw[fill=black,draw=black] (W1) circle (1pt); 
   \filldraw[fill=white,draw=black] (W2) circle (1pt); 
\filldraw[fill=white,draw=black] (Y) circle (1pt);

\node[above] at (W1){\scriptsize $w_1$};
%\node[above] at (W2){\scriptsize $w_2$};
\node[left] at (P1){\scriptsize $v_1$};
\node[below] at (P0){\scriptsize $v_2$};
\node[right] at (P4){\scriptsize $v_3$};
\node[left] at (P2){\scriptsize $u$};
\node[right] at (P3){\scriptsize$v$};
\node[below=0.4] at (P0){\scriptsize$C$};
\node at ($(P0)+(0.1, 0.2)$){\scriptsize \red{$P$}};
\node at ($(P1)+(-0.45, -0.2)$){\scriptsize \bl{$P'$}};
\node[below] at (Y){\scriptsize$y$};

\node[below=1.75] at (P0){\scriptsize (c) Theta-plus graph};
     \end{scope}

       \begin{scope}[xshift=8cm] 

        \def\side{0.35}
% 画五边形

 \foreach \i in {0,1,2,3,4} {
        \coordinate (P\i) at ({\side * cos(90 + \i*72)}, {\side * sin(90 + \i*72)});
    }

    % 绘制原始五边形
    \draw (P0) -- (P1) -- (P2) -- (P3) -- (P4) -- cycle;

    % 计算中点 M
    \coordinate (M) at ($(P2)!0.5!(P3)$);

    % 翻折每个顶点
    \foreach \i in {0,1,2,3,4} {
        \coordinate (P\i') at ($2*(M) - (P\i)$);
    }

    % 绘制翻折后的五边形
    \draw[blue] (P0') -- (P1') -- (P2') -- (P3') -- (P4') -- cycle;

 \coordinate (W1) at ($(P0) + (0,0.3)$);
\draw[red] (P0)--(W1);
\draw[red, smooth] (W1) .. controls ($(P4) + (0.5,0.1)$) .. (P1');
\draw[dashed] (P1')--(P2);

 \foreach \i in {0,1,2,3,4}{
 \ifnum\i=2
     \filldraw[fill=white,draw=black] (P\i) circle (0.9pt);  
            \else
             \ifnum\i=3
    \filldraw[fill=white,draw=black] (P\i) circle (0.9pt);  
             \else
      \filldraw[fill=black,draw=black] (P\i) circle (0.9pt);    
             \fi 
             \fi
  % \node at ($(P\i) + (0.3, 0.3)$) {\scriptsize $v_{\i}$};

 }

  \foreach \i in {0,1,4}{
     \filldraw[fill=black,draw=black] (P\i') circle (0.9pt);  

 }
 
  \filldraw[fill=black,draw=black] (W1) circle (0.9pt);

\node[above] at (W1){\scriptsize $w_1$};
\node[left] at (P1){\scriptsize $v_1$};
\node[above] at (P0){\scriptsize $v_2$};
\node[right] at (P4){\scriptsize $v_3$};
\node[left] at (P2){\scriptsize $u$};
\node[above=0.3] at (P0'){\scriptsize \bl{$C'$}};
\node[right] at (P3){\scriptsize$v$};
\node[below=0.4] at (P0){\scriptsize$C$};
\node[right=0.28] at (W1){\scriptsize \red{$P$}};
\node[right] at (P1'){\scriptsize$y$};
 % 在 scope 的右上方添加名字
        \node[below=2] at (P0) {\scriptsize (d) Theta-plus graph ended $v_2$, $u$}; 
     \end{scope}

\end{tikzpicture}
\caption{A pair of pseudo-twins $\{u, v\}$ of index $k$ in $G-I(P)$, where $k\ge 1$ } \label{fig:adjacent pseudo-twins of $G-I(P)$}
\end{figure}
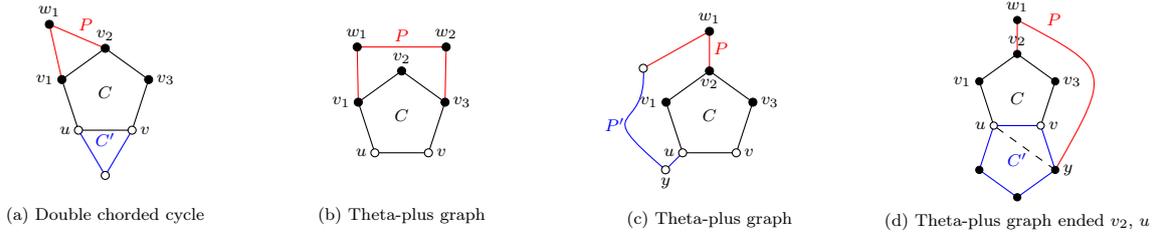

Assume $N_G(I(P)) \cap V(C)$ contains exactly one vertex  $v_i$. Without loss of generality, assume that $i \ne 1$ (hence $v_i$ is not adjacent to $u$).
Note that every vertex of $V(C) - \{v_i,u,v\}$ has degree 2 in $G$ and hence is not adjacent to any vertex not in $C$.
Let $P'$ be a shortest path in $G-v_i$ connecting $I(P)$ to $\{u,v\}$. Let $y$ be the vertex of $P'$ adjacent to $\{u,v\}$. 
If $y$ is adjacent to $u$, then the subgraph of $G$ induced by  $I(P) \cup V(P') \cup V(C)$ is a theta-plus graph with end vertices $v_i$ and $u$, see Fig. \ref{fig:adjacent pseudo-twins of $G-I(P)$}(c). Assume $y$ is adjacent to $v$ but not $u$. By the definition of pseudo-twins, $y$ is contained in another odd cycle $C'$ in $G-I(P)$ of length at least 5 containing edge $uv$. Then the subgraph of $G$ induced by  $I(P) \cup V(P') \cup V(C) \cap V(C')$ contains three induced paths of lengths at least 2 connecting $v_i$ and $u$, two of them are the two paths in $C$, and the third one go through $P'$ and part of $C'$. These three paths induces a   theta-plus graph with end vertices $v_i$ and $u$, see Fig. \ref{fig:adjacent pseudo-twins of $G-I(P)$}(d). 
%If $v_i$ is not adjacent to $v$ or $y$ is adjacent to $u$,   then  the subgraph of $G$ induced by  $I(P) \cup V(P') \cup V(C)$ is a theta-plus graph with end vertices $v_i$ and $v$, or with end vertices $v_i$ and $u$.
%Assume $v_i$ is adjacent to $v$ and $y$ is not adjacent to $u$. If there is a path $P''$ in $G-\{v_i, v\}$ connecting $P$ and $u$, then   $I(P) \cup V(P'') \cup V(C)$ is a theta-plus graph with end vertices  $v_i$ and $u$. Otherwise, $I(P) \cup V(P') \cup V(C) \cup V(C')$ is a double chorded cycle. 

Assume next that $k=0$, i.e., $\{u,v\}$ is a pair of adjacent twins. Let $U$ be the set of common neighbours of $u$ and $v$. As $d_{G'}(u) = d_{G'}(v) \ge 3$, $|U| \ge 2$. Since $G$ contains no adjacent twins and no induced even cycles, exactly one of $u,v$ is adjacent to a vertex of $P$, say $u \sim w_1$. Let $P'$ be a shortest path in $G-u$ connecting $w_1$ and a vertex     $z \in U$.    Let $y$ be the neighbour of $z$ in $P'$.   If $z$ is not adjacent to a vertex $z' \in U$, then either $y$ is adjacent to $z'$ and $\{v, y, z, z'  \}$ induces 4-cycle, see Fig. \ref{fig:adjacent twins of $G-I(P)$}(a), or $y$ is not adjacent to $z'$ and   $\{u, v, z, z'  \}  \cup V(P')$ induces a double chorded cycle, see Fig. \ref{fig:adjacent twins of $G-I(P)$}(b).  Thus $z$ is adjacent to every vertex of $U$.

\begin{figure}
    \centering
\begin{tikzpicture}[scale=1.5, every node/.style={scale=0.8}, baseline] 
%$u,v$ is a pair of adjacent twins 
 
     \begin{scope}[xshift=0cm]     
 \coordinate (u) at (0,0);
  \coordinate (v) at (0.6,0);
    \coordinate (z1) at (0.1,-0.6);
      \coordinate (z) at (0.5,-0.6);
        \coordinate (w1) at (0.2,0.5);
         \coordinate (w2) at (0.9,0.5);
          \coordinate (y) at (0.3,-0.9);
 \draw (u) -- (v) -- (z)  -- (u) -- (z1)--(v);
  \draw (y) -- (z1);
\draw[red] (u)--(w1)--(w2);
\draw[blue] (y)--(z);
\draw[dashed] (z1)--(z);
 \draw[draw=gray] (0.3,-0.6) ellipse [x radius=0.6, y radius=0.15];

\draw[blue, smooth] (y) .. controls ($(v) + (0.7,-0.6)$)  .. (w2);

 \foreach \point/\fillColor in {u/black, v/black, z/white, z1/white, w1/black, w2/white, y/white} {
  \filldraw[fill=\fillColor,draw=black] (\point) circle (1pt);
}

\node[above] at (w1){\scriptsize $w_1$};
\node[left] at (u){\scriptsize $u$};
\node[right] at (v){\scriptsize $v$};
\node[left] at (z1){\scriptsize $z'$};
\node[right] at (z){\scriptsize $z$};
\node[below] at (y){\scriptsize$y$};
\node[left=0.6] at (z1){\scriptsize {$U$}};
\node[right=0.7] at (v){\scriptsize \bl{$P'$}};
\node at (0.5,0.6){\scriptsize \red{$P$}};
 % 在 scope 的右上方添加名字
 
        \node[below=1.8] at (0.4,0) {\scriptsize (a) Induced 4-cycle $[uzyz']$}; 
     \end{scope}

       \begin{scope}[xshift=3.5cm] 
   \coordinate (u) at (0,0);
  \coordinate (v) at (0.6,0);
    \coordinate (z1) at (0.1,-0.6);
      \coordinate (z) at (0.5,-0.6);
        \coordinate (w1) at (0.2,0.5);
         \coordinate (w2) at (0.9,0.5);
          \coordinate (y) at (0.3,-0.9);
 \draw (u) -- (v) -- (z)  -- (u) -- (z1)--(v);
\draw[red] (u)--(w1)--(w2);
\draw[blue] (y)--(z);
\draw[dashed] (y)--(z1)--(z);
 \draw[draw=gray] (0.3,-0.6) ellipse [x radius=0.6, y radius=0.15];

\draw[blue, smooth] (y) .. controls ($(v) + (0.7,-0.6)$)  .. (w2);

 \foreach \point/\fillColor in {u/black, v/black, z/white, z1/white, w1/black, w2/white, y/white} {
  \filldraw[fill=\fillColor,draw=black] (\point) circle (1pt);
}

\node[above] at (w1){\scriptsize $w_1$};
\node[left] at (u){\scriptsize $u$};
\node[right] at (v){\scriptsize $v$};
\node[left] at (z1){\scriptsize $z'$};
\node[right] at (z){\scriptsize $z$};
\node[below] at (y){\scriptsize$y$};
\node[left=0.6] at (z1){\scriptsize {$U$}};
\node[right=0.7] at (v){\scriptsize \bl{$P'$}};
\node at (0.5,0.6){\scriptsize \red{$P$}};
 % 在 scope 的右上方添加名字
 
        \node[below=1.8] at (0.4,0) {\scriptsize (b) Double chorded cycle}; 
     \end{scope}

          \begin{scope}[xshift=7cm] 
   \coordinate (u) at (0,0);
  \coordinate (v) at (0.6,0);
    \coordinate (z1) at (0.1,-0.6);
     \coordinate (z2) at (-0.3,-0.6);
      \coordinate (z) at (0.5,-0.6);
        \coordinate (w1) at (0.2,0.5);
         \coordinate (w2) at (0.9,0.5);
          \coordinate (y) at (0.3,-0.9);
 \draw (z)  -- (u) -- (z1);
 \draw (u) -- (z2);
\draw[red] (u)--(w1)--(w2);
\draw[blue] (y)--(z);
\draw (z1)--(z);
\draw[gray!50,dashed] (u) -- (v) -- (z);
\draw[gray!50,dashed]  (z1)--(v)--(z2);
 \draw[draw=gray] (0.1,-0.6) ellipse [x radius=0.7, y radius=0.15];

\draw[smooth] (z2) .. controls (0.1,-0.7)  .. (z);
\draw[blue, smooth] (y) .. controls ($(v) + (0.7,-0.6)$)  .. (w2);

 \foreach \point/\fillColor in {u/black, v/gray, z/white, z1/white, z2/white, w1/black, w2/white, y/white} {
  \filldraw[fill=\fillColor,draw=black] (\point) circle (1pt);
}

\node[above] at (w1){\scriptsize $w_1$};
\node[left] at (u){\scriptsize $u$};
\node[right] at (v){\scriptsize $v$};
\node[right] at (z){\scriptsize $z$};
\node[below] at (y){\scriptsize$y$};
\node[right=0.7] at (v){\scriptsize \bl{$P'$}};
\node at (0.5,0.6){\scriptsize \red{$P$}};
\node[left=0.5] at (z2){\scriptsize {$U$}};
 % 在 scope 的右上方添加名字
 
        \node[below=1.8] at (0.4,0) {\scriptsize (c) Local structure of $G-v$};
     \end{scope}

\end{tikzpicture}

\caption{A pair of twins $\{u, v\}$ in $G-I(P)$} \label{fig:adjacent twins of $G-I(P)$}
\end{figure}
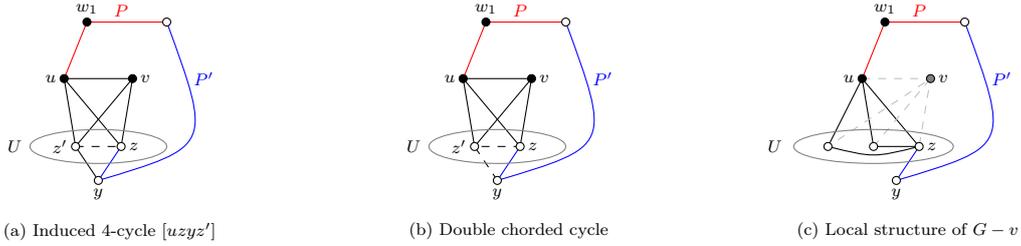
As $N_G(v) \subseteq N_G(u) \cap N_G(z)$, by Lemma \ref{lem-2conn-del}, $G-v$ is 2-connected, see Fig. \ref{fig:adjacent twins of $G-I(P)$}(c). 
If $G-v$ contains a triple of 2-vertices $(v_1,v_2,v_3)$, then since $G$ contains no triple of 2-vertices, there is an index $ i \in \{1,2,3\}$ such that $v_i$ is adjacent to $v$. But $v_i$ is adjacent to both $u$ and $z$ in $G-v$, and $d_{G-v}(u), d_{G-v}(z) \ge 3$, a contradiction. Thus $G-v$ contains no triple of 2-vertices. By Lemma \ref{lem-p1}, $G-v$ has a pair of adjacent twins $t_1,t_2$, with $d_{G-v}(t_1) = d_{G-v}(t_2) \ge 3$.
As $G$ contains no adjacent twins, we may assume that $v$ is adjacent to $t_1$ and not adjacent to $t_2$. If $t_1 =u$, then $t_2=w_1$, as $w_1$ is the only neighbor of $u$ that is not adjacent to $v$. However, $d_G(w_1) = 2$, a contradiction. Thus $t_1 \ne u$, and hence $t_1 \sim u$. This implies that $t_2 \sim u$. Again as  $w_1$ is the only neighbor of $u$ that is not adjacent to $v$, we have $t_2=w_1$,  a contradiction.

\bigskip
\noindent
{\bf Case 2} $G$ has a vertex $w$ such that $G-w$ is 2-connected.

We choose such a vertex $w$ of smallest degree. We may assume that Case 1 does not apply. Hence $d_G(w) \ge 3$. 

  By Lemma \ref{lem-p1},  $G'=G-w$ contains a pair of adjacent pseudo-twins $u,v$ of index $k$ for some $k \ge 0$.  By Corollary \ref{cor-c+}, 
   $G'$ is not a cycle, and hence $d_{G'}(u) = d_{G'}(v) \ge 3$. 

\begin{clm}
    \label{clm-k=0}
    $k=0$.
\end{clm}
\begin{proof}
    Assume to the contrary  that $k \ge 1$. Then $uv$ is contained in a cycle 
    $C=[v,u,v_1,v_2, \ldots, v_{2m+1}]$ of length at least 5, where $d_{G'}(v_i) =2$ for 
    $i=1,2,\ldots, 2m+1$. Similarly, it is obvious that $G-w$ is not a cycle and hence $uv$ is contained in another odd cycle $C'$ (which maybe a triangle).

Since $G$ contains no triple of 2-vertices, some $v_i$ is adjacent to $w$. Since $G$ does not contain a near odd wheel, $w$ has at most three neighbours in $C$, and if $w$ has at least two neighbours in $C$, then the neighbours are consecutive vertices of $C$.

If $w$ is adjacent to three consecutive vertices of $C$, with the middle vertex $v'$ of degree 3 in $G$, then by Lemma \ref{lem-2conn-del}, $G-v'$ is 2-connected. By the choice of $w$, 
$d_G(w) = d_G(v') =3$. Then $\{w,v'\}$ is a pair of adjacent twins, see Fig. \ref{fig:adjacent pseudo-twins of $G-w$}(a), a contradiction. Thus $w$ is not adjacent to three consecutive vertices of $C$ with the middle vertex $v'$ of degree 3 in $G$.

Thus by symmetry, we may assume that $v_1$ is not adjacent to $w$, and hence $d_G(v_1) =2$, and $w$ is adjacent to $ v_{i_0}$ for some $2 \le i_0 \le 3$.
Let $P$ be the proper ear of $G$ that contains $v_1$. Since Case 1 does not apply, some $v' \in V(C)$ is a cut-vertex in  $G-I(P)$. 
Note that $v'$ separates $u$ and $v_{i_0}$ in  $G-I(P)$.  Thus $w$ is not adjacent to $u$, and hence $w$ is adjacent to at most two vertices of $C$
(if $w$ is adjacent to 3 vertices of $C$, then the three vertices are consecutive in $C$ and the middle vertex has degree 3 in $G$). 

Let $C'$ be another induced cycle, maybe triangle, contained the edge $uv$. Since $v'$ separates $u$ and $v_{i_0}$ in $G-I(P)$, then there is no edge between $w$ and $V(C')-\{v\}$.   If $w$ is adjacent to two vertices of $C$, then these two vertices are consecutive in $C$, and hence the set $\{w\} \cup V(C) \cup V(C')$ induces a double chorded cycle, see Fig. \ref{fig:adjacent pseudo-twins of $G-w$}(b). This is a contradiction. 
Thus $w$ is adjacent to exactly one vertex $v_{i_0}$ of $C$. 

 %==========================================
 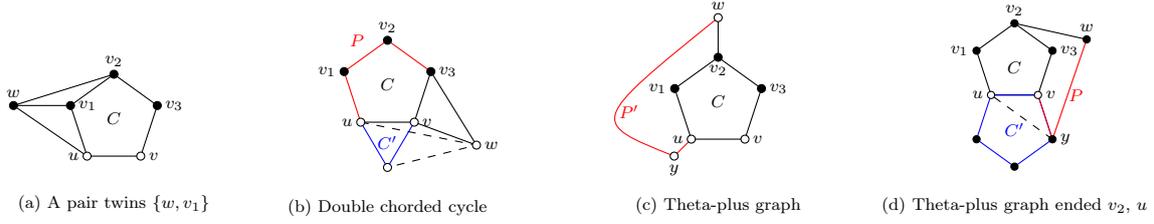
\begin{figure}
     \centering
\begin{tikzpicture}[scale=1.5, every node/.style={scale=0.8}, baseline] 
%$u,v$ is a pair of adjacent psedo-twins of index $k$, $k\ge 1$ 
    % 定义正五边形的边长
  \def\side{0.4}
     \begin{scope}[xshift=0cm, yshift=-0.3cm]
    % 计算正五边形的五个顶点
    \foreach \i in {0,1,2,3,4} {
        \coordinate (P\i) at ({\side * cos(90 + \i*72)}, {\side * sin(90 + \i*72)});
       
    }
    
 \coordinate (W1) at ($(P1) + (-0.5,0)$);
 \draw (P0) -- (P1) -- (P2) -- (P3) -- (P4) -- cycle;
\draw (P0)--(W1)--(P1);
\draw (W1)--(P2);

 \foreach \i in {0,1,2,3,4}{
 \ifnum\i=2
     \filldraw[fill=white,draw=black] (P\i) circle (1pt);  
            \else
             \ifnum\i=3
    \filldraw[fill=white,draw=black] (P\i) circle (1pt);  
             \else
      \filldraw[fill=black,draw=black] (P\i) circle (1pt);    
             \fi 
             \fi
  % \node at ($(P\i) + (0.3, 0.3)$) {\scriptsize $v_{\i}$};

 }
  \filldraw[fill=black,draw=black] (W1) circle (1pt);

\node[above] at (W1){\scriptsize $w$};
\node[right] at (P1){\scriptsize $v_1$};
\node[above] at (P0){\scriptsize $v_2$};
\node[right] at (P4){\scriptsize $v_3$};
\node[left] at (P2){\scriptsize $u$};
\node[right] at (P3){\scriptsize$v$};
\node[below=0.4] at (P0){\scriptsize$C$};
 % 在 scope 的右上方添加名字
        \node[below=1.5] at (P0) {\scriptsize (a) A pair twins $\{w, v_1\}$}; 
     \end{scope}

       \begin{scope}[xshift=2.4cm] 
    \foreach \i in {0,1,2,3,4} {
        \coordinate (P\i) at ({\side * cos(90 + \i*72)}, {\side * sin(90 + \i*72)});
       
    }
    
 \coordinate (W1) at ($(P4) + (0.4,-0.65)$);
  \coordinate (C1) at ($(P2)!0.5!(P3) + (0,-0.4)$);
 \draw (P2) -- (P3) -- (P4)--(W1)--(P3);
\draw[red] (P2)--(P1)--(P0)--(P4);
\draw[blue] (P2)--(C1)--(P3);
\draw[dashed] (P2)--(W1)--(C1);

 \foreach \point/\fillColor in {P1/black, P0/black, P4/black, P2/white, P3/white, W1/white, C1/white} {
  \filldraw[fill=\fillColor,draw=black] (\point) circle (1pt);
}

\node[right] at (W1){\scriptsize $w$};
\node[left] at (P1){\scriptsize $v_1$};
\node[above] at (P0){\scriptsize $v_2$};
\node[right] at (P4){\scriptsize $v_3$};
\node[left] at (P2){\scriptsize $u$};
\node[above=0.13] at (C1){\scriptsize \bl{$C'$}};
\node[right] at (P3){\scriptsize$v$};
\node[below=0.4] at (P0){\scriptsize$C$};
\node[left=0.2] at (P0){\scriptsize \red{$P$}};
 % 在 scope 的右上方添加名字
        \node[below=2] at (P0) {\scriptsize (b) Double chorded cycle}; 
     \end{scope}

            \begin{scope}[xshift=5.3cm, yshift=-0.15cm] 
    % 计算正五边形的五个顶点
    \foreach \i in {0,1,2,3,4} {
        \coordinate (P\i) at ({\side * cos(90 + \i*72)}, {\side * sin(90 + \i*72)});
       
    }
    
 \coordinate (W1) at ($(P0) + (0,0.35)$);
 % \coordinate (W2) at ($(P1) + (-0.2,0.3)$);
  \coordinate (Y) at ($(P2) + (-0.15,-0.15)$);
 \draw (P0) -- (P1) -- (P2) -- (P3) -- (P4) -- cycle;
\draw (P0)--(W1);
\draw[red] (Y)--(P2);
\draw[red, smooth] (Y) .. controls ($(P2) + (-0.9,0.15)$) .. (W1);

 \foreach \i in {0,1,2,3,4}{
 \ifnum\i=2
     \filldraw[fill=white,draw=black] (P\i) circle (1pt);  
            \else
             \ifnum\i=3
    \filldraw[fill=white,draw=black] (P\i) circle (1pt);  
             \else
      \filldraw[fill=black,draw=black] (P\i) circle (1pt);    
             \fi 
             \fi
  % \node at ($(P\i) + (0.3, 0.3)$) {\scriptsize $v_{\i}$};

 }
  \filldraw[fill=white,draw=black] (W1) circle (1pt); 
\filldraw[fill=white,draw=black] (Y) circle (1pt);

\node[above] at (W1){\scriptsize $w$};
\node[left] at (P1){\scriptsize $v_1$};
\node[below] at (P0){\scriptsize $v_2$};
\node[right] at (P4){\scriptsize $v_3$};
\node[left] at (P2){\scriptsize $u$};
\node[right] at (P3){\scriptsize$v$};
\node[below=0.4] at (P0){\scriptsize$C$};
\node at ($(P1)+(-0.4, -0.2)$){\scriptsize \red{$P'$}};
\node[below] at (Y){\scriptsize$y$};

\node[below=1.75] at (P0){\scriptsize (c) Theta-plus graph};
     \end{scope}

       \begin{scope}[xshift=7.9cm, yshift=0.2cm, ] 

        \def\side{0.35}
% 画五边形

 \foreach \i in {0,1,2,3,4} {
        \coordinate (P\i) at ({\side * cos(90 + \i*72)}, {\side * sin(90 + \i*72)});
    }

    % 绘制原始五边形
    \draw (P0) -- (P1) -- (P2) -- (P3) -- (P4) -- cycle;

    % 计算中点 M
    \coordinate (M) at ($(P2)!0.5!(P3)$);

    % 翻折每个顶点
    \foreach \i in {0,1,2,3,4} {
        \coordinate (P\i') at ($2*(M) - (P\i)$);
    }

    % 绘制翻折后的五边形
    \draw[blue] (P0') -- (P1') -- (P2') -- (P3') -- (P4') -- cycle;

\draw[red] (P1') -- (P2');

 \coordinate (W1) at ($(P4) + (0.3,0.1)$);
\draw(P0)--(W1);
\draw[red] (P1')--(W1);
\draw[dashed] (P1')--(P2);

 \foreach \i in {0,1,2,3,4}{
 \ifnum\i=2
     \filldraw[fill=white,draw=black] (P\i) circle (0.9pt);  
            \else
             \ifnum\i=3
    \filldraw[fill=white,draw=black] (P\i) circle (0.9pt);  
             \else
      \filldraw[fill=black,draw=black] (P\i) circle (0.9pt);    
             \fi 
             \fi
  % \node at ($(P\i) + (0.3, 0.3)$) {\scriptsize $v_{\i}$};

 }

  \foreach \i in {0,1,4}{
     \filldraw[fill=black,draw=black] (P\i') circle (0.9pt);  

 }
 
  \filldraw[fill=black,draw=black] (W1) circle (0.9pt);

\node[above] at (W1){\scriptsize $w$};
\node[left] at (P1){\scriptsize $v_1$};
\node[above] at (P0){\scriptsize $v_2$};
\node[right] at (P4){\scriptsize $v_3$};
\node[left] at (P2){\scriptsize $u$};
\node[above=0.3] at (P0'){\scriptsize \bl{$C'$}};
\node[right] at (P3){\scriptsize$v$};
\node[below=0.4] at (P0){\scriptsize$C$};
\node[right=0.3] at (P3){\scriptsize \red{$P$}};
\node[right] at (P1'){\scriptsize$y$};
 % 在 scope 的右上方添加名字
        \node[below=2.2] at (P0) {\scriptsize (d) Theta-plus graph ended $v_2$, $u$}; 
     \end{scope}

\end{tikzpicture}
\caption{A pair of pseudo-twins $\{u, v\}$ of index $k$ in $G-w$, where $k\ge 1$ } \label{fig:adjacent pseudo-twins of $G-w$}
 \end{figure}
%==============================================

Let $P'$ be a shortest path in $G-v_{i_0}$ connecting $w$ to $\{u,v\}$. Let $y$ be the vertex of $P'$ adjacent to $\{u,v\}$. 
If $y$ is adjacent to $u$ (and possibly to $v$ as well), then the subgraph of $G$ induced by  $ V(P') \cup V(C)$ is a theta-plus graph with end vertices $v_{i_0}$ and $u$, see Fig. \ref{fig:adjacent pseudo-twins of $G-w$}(c). Assume $y$ is adjacent to $v$ but not to $u$. By the definition of pseudo-twins, $y$ is contained in another odd cycle $C'$ of length at least $5$ containing edge $uv$, and $wy$ is an edge. By the same argument above, $w$ is adjacent to exactly one vertex $y$ of $C'$. Then the subgraph of $G$ induced by  $V(P') \cup V(C) \cup V(C')$ is a theta-plus graph with end vertices $v_{i_0}$ and $u$, see Fig. \ref{fig:adjacent pseudo-twins of $G-w$}(d).
This completes the proof of Claim \ref{clm-k=0}.
\end{proof}

By Claim \ref{clm-k=0},  $\{u,v\}$ is a pair of adjacent twins. Let $U$ be the set of common neighbours of $u$ and $v$. As $d_{G'}(u) = d_{G'}(v) \ge 3$, $|U| \ge 2$. Since $G$ contains no adjacent twins, exactly one of $u,v$ is adjacent to  $w$, say $u \sim w$. 

Let $P'$ be a shortest path in $G-u$  connecting $w$ to a vertex  $z \in U$.  Let $y$ be the neighbour of $z$ in $P'$. %Note that $y \ne v$, for otherwise $y \in U$, a contradiction.
If $z$ is not adjacent to a vertex $z' \in U$, then either $y$ is  adjacent to $z'$ and hence $\{v,y,z,z\}$ induces a 4-cycle, see Fig. \ref{fig:adjacent twins of $G-w$}(a), or 
$y$ is not adjacent to $z'$ and $\{u,v,z,z'\} \cup V(P')$ induces a double chorded cycle, see Fig. \ref{fig:adjacent twins of $G-w$}(b), a contradiction. Thus we may assume that   $z$ is adjacent to every vertex of $U$. 

\begin{figure}
    \centering
\begin{tikzpicture}[scale=1.5, every node/.style={scale=0.8}, baseline] 
%$u,v$ is a pair of adjacent twins 
 
     \begin{scope}[xshift=0cm]     
 \coordinate (u) at (0,0);
  \coordinate (v) at (0.6,0);
    \coordinate (z1) at (0.1,-0.6);
      \coordinate (z) at (0.5,-0.6);
        \coordinate (w1) at (0.4,0.5);
          \coordinate (y) at (0.3,-0.9);
 \draw (w1)--(u) -- (v) -- (z)  -- (u) -- (z1)--(v);
  \draw (y) -- (z1);
\draw[red] (y)--(z);
\draw[dashed] (z1)--(z);
 \draw[draw=gray] (0.3,-0.6) ellipse [x radius=0.6, y radius=0.15];

\draw[red, smooth] (y) .. controls ($(v) + (0.7,-0.6)$)  .. (w1);

 \foreach \point/\fillColor in {u/black, v/black, z/white, z1/white, w1/white, y/white} {
  \filldraw[fill=\fillColor,draw=black] (\point) circle (1pt);
}

\node[above] at (w1){\scriptsize $w$};
\node[left] at (u){\scriptsize $u$};
\node[right] at (v){\scriptsize $v$};
\node[left] at (z1){\scriptsize $z'$};
\node[right] at (z){\scriptsize $z$};
\node[below] at (y){\scriptsize$y$};
\node[left=0.6] at (z1){\scriptsize {$U$}};
\node[right=0.5] at (v){\scriptsize \red{$P'$}};
 % 在 scope 的右上方添加名字
 
        \node[below=1.8] at (0.4,0) {\scriptsize (a) Induced 4-cycle $[uzyz']$}; 
     \end{scope}

       \begin{scope}[xshift=3.5cm] 
   \coordinate (u) at (0,0);
  \coordinate (v) at (0.6,0);
    \coordinate (z1) at (0.1,-0.6);
      \coordinate (z) at (0.5,-0.6);
        \coordinate (w1) at (0.4,0.5);
          \coordinate (y) at (0.3,-0.9);
 \draw (w1)--(u) -- (v) -- (z)  -- (u) -- (z1)--(v);
\draw[red] (y)--(z);
\draw[dashed] (y)--(z1)--(z);
 \draw[draw=gray] (0.3,-0.6) ellipse [x radius=0.6, y radius=0.15];

\draw[red, smooth] (y) .. controls ($(v) + (0.7,-0.6)$)  .. (w1);

 \foreach \point/\fillColor in {u/black, v/black, z/white, z1/white, w1/white,  y/white} {
  \filldraw[fill=\fillColor,draw=black] (\point) circle (1pt);
}

\node[above] at (w1){\scriptsize $w$};
\node[left] at (u){\scriptsize $u$};
\node[right] at (v){\scriptsize $v$};
\node[left] at (z1){\scriptsize $z'$};
\node[right] at (z){\scriptsize $z$};
\node[below] at (y){\scriptsize$y$};
\node[left=0.6] at (z1){\scriptsize {$U$}};
\node[right=0.4] at (v){\scriptsize \red{$P'$}};
 % 在 scope 的右上方添加名字
 
        \node[below=1.8] at (0.4,0) {\scriptsize (b) Double chorded cycle}; 
     \end{scope}

          \begin{scope}[xshift=7cm] 
   \coordinate (u) at (0,0);
  \coordinate (v) at (0.6,0);
    \coordinate (z1) at (0.1,-0.6);
     \coordinate (z2) at (-0.3,-0.6);
      \coordinate (z) at (0.5,-0.6);
        \coordinate (w1) at (0.4,0.5);
          \coordinate (y) at (0.3,-0.9);
 \draw (z)  -- (u) -- (z1);
 \draw (w1)--(u) -- (z2);
\draw[red] (y)--(z);
\draw (z1)--(z);
\draw[gray!50,dashed] (u) -- (v) -- (z);
\draw[gray!50,dashed]  (z1)--(v)--(z2);
 \draw[draw=gray] (0.1,-0.6) ellipse [x radius=0.7, y radius=0.15];

\draw[smooth] (z2) .. controls (0.1,-0.7)  .. (z);
\draw[red, smooth] (y) .. controls ($(v) + (0.7,-0.6)$)  .. (w1);

 \foreach \point/\fillColor in {u/black, v/gray!50, z/white, z1/white, z2/white, w1/white, y/white} {
  \filldraw[fill=\fillColor,draw=black] (\point) circle (1pt);
}

\node[above] at (w1){\scriptsize $w$};
\node[left] at (u){\scriptsize $u$};
\node[right] at (v){\scriptsize $v$};
\node[right] at (z){\scriptsize $z$};
\node[below] at (y){\scriptsize$y$};
\node[right=0.4] at (v){\scriptsize \red{$P'$}};
\node[left=0.5] at (z2){\scriptsize {$U$}};
 % 在 scope 的右上方添加名字
 
        \node[below=1.8] at (0.4,0) {\scriptsize (c) Local structure of $G-v$};
     \end{scope}

\end{tikzpicture}

\caption{A pair of twins $\{u, v\}$ in $G-w$} \label{fig:adjacent twins of $G-w$}
 \end{figure}
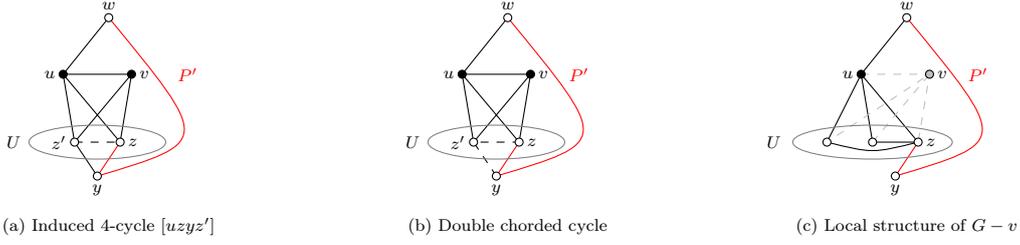
 
As $N_G(v) \subseteq N_G(u) \cap N_G(z)$, by Lemma \ref{lem-2conn-del}, $G-v$ is 2-connected, see Fig. \ref{fig:adjacent twins of $G-w$}(c). 
If $G-v$ contains a triple of 2-vertices $(v_1,v_2,v_3)$, then since $G$ contains no triple of 2-vertices, there is an index $ i \in \{1,2,3\}$ such that $v_i$ is adjacent to $v$. But $v_i$ is adjacent to both $u$ and $z$ in $G-v$, and $d_{G-v}(u), d_{G-v}(z) \ge 3$, a contradiction. Thus $G-v$ contains no triple of 2-vertices. 

By Lemma \ref{lem-p1}, $G-v$ has a pair of adjacent twins $t_1,t_2$, with $d_{G-v}(t_1) = d_{G-v}(t_2) \ge 3$.
As $G$ contains no adjacent twins, we may assume that $v$ is adjacent to $t_1$ and not adjacent to $t_2$. As $w$ is the only neighbor of $u$ that is not adjacent to $v$, we conclude that  $t_2=w$, as either   $t_1 \sim u$, or  $t_1 =u$.

Assume $t_1\in U$. If there is a common neighbour $x$ of $t_1$ and $w$ such that $x\notin U\cup\{u\}$, $\{t_1,w,u,v,x\}$ induces a double chorded cycle, see Fig. \ref{fig:adjacent twins of $G-w$-2}(a), a contradiction. Thus $N_G(w)\in U\cup \{u\}$. If there is a vertex $z'$ in $U$ such that $wz'\notin E(G)$, then $\{z',u,w,t_1,v\}$ induces a double chorded cycle, see Fig. \ref{fig:adjacent twins of $G-w$-2}(b) , a contradiction. 
Hence $N_{G-v}(w)=U\cup\{u\}= N_{G-v}(t_1)=N_G(v)$ for both $t_1\in U$ and $t_1=u$.
By Lemma \ref{lem-2conn-del}, $G-u$ is 2-connected, see Fig. \ref{fig:adjacent twins of $G-w$-2}(c),  and  contains a pair of adjacent pseudo-twins $t'_1,t'_2$ of index $k$ for some $k \ge 0$. It is easy to verify that $G-u$ contains no triple of 2-vertices. Hence $k=0$ and $t'_1,t'_2$ is a pair of adjacent twins in $G-u$. As $G$ contains no adjacent twins, we may assume that $u$ is adjacent to $t'_1$ but not $t'_2$. As $N_{G-u}(t'_2)=N_{G-u}(t'_1)$, we have $t'_2 \in N_G(v) - N_G(u)$, contrary to the fact that $N_{G-w}(u) = N_{G-w}(v)$.

\begin{figure}
    \centering
\begin{tikzpicture}[scale=1.5, every node/.style={scale=0.8}, baseline] 
%$u,v$ is a pair of adjacent twins 

          \begin{scope}[xshift=0cm] 
        \coordinate (u) at (0,0);   
    \coordinate (v) at (0.6,0);
    \coordinate (z1) at (0.3,-0.6);
     \coordinate (z2) at (-0.3,-0.6);
      \coordinate (z) at (0.9,-0.6);
        \coordinate (w1) at (0.4,0.5);
          \coordinate (x) at (-0.6,0.2);

 \draw (x)--(w1)--(u) -- (z2)--(x);
 \draw (u)--(v)--(z2);
 \draw[smooth] (w1) .. controls (-0.4,0)  .. (z2);
 
\draw[gray!50] (v)--(z)--(u) -- (z1) -- (v)--(z1)--(z);
 \draw[draw=gray] (z1) ellipse [x radius=0.9, y radius=0.15];

\draw[smooth,gray!50] (z2) .. controls (0.3,-0.7)  .. (z);

 \foreach \point/\fillColor in {u/black, v/black, z/white, z1/white, z2/white, w1/white, x/white} {
  \filldraw[fill=\fillColor,draw=black] (\point) circle (1pt);
}

\node[above] at (w1){\scriptsize \bl{$w=t_2$}};
\node[below] at (z2){\scriptsize \bl{$t_1$}};
\node[left] at (u){\scriptsize $u$};
\node[right] at (v){\scriptsize $v$};
\node[right] at (z){\scriptsize $z$};
\node[below] at (x){\scriptsize$x$};
\node[left=0.5] at (z2){\scriptsize {$U$}};
 % 在 scope 的右上方添加名字
 
        \node[below=1.8] at (0.2,0) {\scriptsize (a) Double chorded cycle};
     \end{scope}

             \begin{scope}[xshift=3.5cm] 
                \coordinate (u) at (0,0);
  \coordinate (v) at (0.6,0);
    \coordinate (z1) at (0.3,-0.6);
     \coordinate (z2) at (-0.3,-0.6);
      \coordinate (z) at (0.9,-0.6);
        \coordinate (w1) at (0.4,0.5);

 \draw (w1)--(u) -- (z2)--(v)--(u)--(z1)--(v);
 \draw[smooth] (w1) .. controls (-0.4,0)  .. (z2);
 \draw[dashed] (z2)--(z1)--(w1);
\draw[gray!50] (v)--(z)--(u);
 \draw[draw=gray] (z1) ellipse [x radius=0.9, y radius=0.15];

\draw[smooth,gray!50] (z2) .. controls (0.3,-0.7)  .. (z);

 \foreach \point/\fillColor in {u/black, v/black, z/white, z1/white, z2/white, w1/white} {
  \filldraw[fill=\fillColor,draw=black] (\point) circle (1pt);
}

\node[above] at (w1){\scriptsize \bl{$w=t_2$}};
\node[below] at (z2){\scriptsize \bl{$t_1$}};
\node[left] at (u){\scriptsize $u$};
\node[below] at (z1){\scriptsize $z'$};
\node[right] at (v){\scriptsize $v$};
\node[right] at (z){\scriptsize $z$};
\node[below] at (x){\scriptsize$x$};
\node[left=0.5] at (z2){\scriptsize {$U$}};
 % 在 scope 的右上方添加名字
 
        \node[below=1.8] at (0.2,0) {\scriptsize (b) Double chorded cycle};
     \end{scope}  

                  \begin{scope}[xshift=7cm] 
   \coordinate (u) at (0,0);
  \coordinate (v) at (0.6,0);
    \coordinate (z1) at (0.3,-0.6);
     \coordinate (z2) at (-0.3,-0.6);
      \coordinate (z) at (0.9,-0.6);
        \coordinate (w1) at (0.3,0.5);

 \draw (z1)--(z)--(v)--(z1)--(w1);
 \draw (v)--(z2);
 \draw[smooth] (w1) .. controls (-0.4,0)  .. (z2);
  \draw[smooth] (w1) .. controls (1,0)  .. (z);
 \draw[dashed,gray!50] (z2)--(u)--(w1);
 \draw[dashed,gray!50] (z1)--(u)--(z);
\draw[dashed,gray!50] (v)--(u);
 \draw[draw=gray] (z1) ellipse [x radius=0.9, y radius=0.15];

\draw[smooth] (z2) .. controls (0.3,-0.7)  .. (z);

 \foreach \point/\fillColor in {u/gray!50, v/black, z/white, z1/white, z2/white, w1/black} {
  \filldraw[fill=\fillColor,draw=black] (\point) circle (1pt);
}

\node[above] at (w1){\scriptsize $w$};
\node[left] at (u){\scriptsize $u$};
\node[right] at (v){\scriptsize $v$};
\node[right] at (z){\scriptsize $z$};
\node[below] at (x){\scriptsize$x$};
\node[left=0.5] at (z2){\scriptsize {$U$}};
 % 在 scope 的右上方添加名字
 
        \node[below=1.8] at (0.2,0) {\scriptsize (c) Local structure of $G-u$};
     \end{scope}  
\end{tikzpicture}
\caption{A pair of twins $\{t_1, t_2\}$ in $G-v$ or 2-connected graph $G-u$} \label{fig:adjacent twins of $G-w$-2}
 \end{figure}
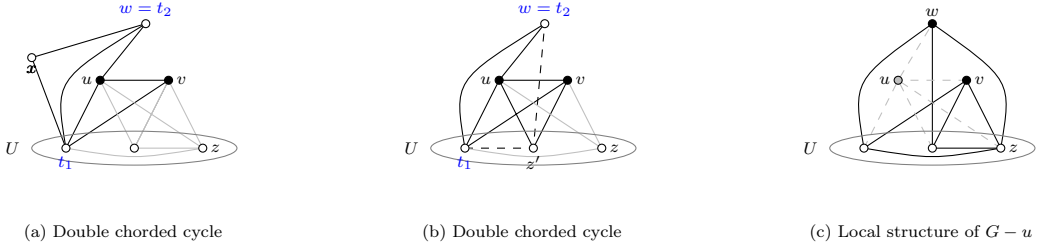

This completes the proof of Theorem \ref{thm-nic}. 
\end{proof}

\section{Expanded Gallai-trees}

The three   operations defined in Section 3 apply to pairs $(G,L)$. We may forget the list assignments, and consider the three operations as graph operations, and denote these operations by $D_v(G), T_v(G), G_1 \oplus_{(v_1,v_2)} G_2$, respectively.

As  a consequence of Theorem \ref{thm-nic}, we have the following corollary. 

\begin{cor}
\label{cor-nidc}
    A graph $G$ is not indicated degree-choosable if and only if one of the following holds:
\begin{enumerate}
\item $G =K_1$.
  \item   $G = G_1 \oplus_{(v_1,v_2)} G_2$, and each $G_i$ is   not indicated degree-choosable.
    \item  $G= D_v(G')$ and $G'$   is not indicated degree-choosable.
    \item $G=T_v(G')$ and $G'$ is not indicated degree-choosable. 
\end{enumerate}
\end{cor}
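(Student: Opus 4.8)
The plan is to read Corollary~\ref{cor-nidc} off Theorem~\ref{thm-nic} together with the recursive description of $\mathcal{W}$ in Definition~\ref{def-w}, translating between pairs $(G,L)$ and bare graphs. Since a graph is indicated degree-choosable if and only if every connected component is, and a disconnected graph with a component $C$ that is not indicated degree-choosable can be written as a vertex sum of $C$ with the remaining components attached at an isolated vertex identified with a vertex of $C$, it suffices to prove the equivalence for connected $G$; I would dispose of this reduction in one sentence and then assume $G$ connected.

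For ``some clause holds $\Rightarrow$ $G$ is not indicated degree-choosable'': if $G=K_1$, then $(K_1,L_{\emptyset})$ witnesses this by the single-vertex case of Lemma~\ref{lem-remaining}. If $G=D_v(G')$ with $G'$ not indicated degree-choosable, note $G'=G-v'$ is connected (the added twin $v'$ is never a cut vertex, as its neighbourhood $\{v\}\cup N_{G'}(v)$ induces a connected subgraph), so by Theorem~\ref{thm-nic} there is a degree-list assignment $L'$ with $(G',L')$ infeasible; picking a colour $c\notin\bigcup_{x\in N_{G'}[v]}L'(x)$ and applying Lemma~\ref{lem-main1}, $D_{(v,c)}(G',L')$ is infeasible, and its underlying graph is $D_v(G')=G$ equipped with a degree-list assignment, so $G$ is not indicated degree-choosable. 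The case $G=T_v(G')$ is identical, using Lemma~\ref{lem-main2} (with $d_{G'}(v)=2$ built into the definition of $T_v$, and $G'$ connected since $G$ is obtained from it by elongating a vertex into a path). If $G=G_1\oplus_{(v_1,v_2)}G_2$ with each $G_i$ not indicated degree-choosable, then each $G_i$ is connected (else $G$ would be disconnected), so Theorem~\ref{thm-nic} yields infeasible $(G_i,L_i)$; after renaming colours we may assume the two colour sets are disjoint, so in particular $L_1(v_1)\cap L_2(v_2)=\emptyset$, and Lemma~\ref{lem-vs} shows $(G,L)=(G_1,L_1)\oplus_{(v_1,v_2)}(G_2,L_2)$ is infeasible, with $L$ a degree-list assignment since $|L(v^*)|=|L_1(v_1)|+|L_2(v_2)|\ge d_{G_1}(v_1)+d_{G_2}(v_2)=d_G(v^*)$.

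For the converse, suppose $G$ (connected) is not indicated degree-choosable. Pick a degree-list assignment $L$ with $(G,L)$ infeasible; by Theorem~\ref{thm-nic}, $(G,L)\in\mathcal{W}$, so $(G,L)$ was produced by one of the four clauses of Definition~\ref{def-w}. If it is $(K_1,L_{\emptyset})$ we obtain clause (1). Otherwise $(G,L)$ equals $D_v(G'',L'')$, or $T_v(G'',L'')$ with $d_{G''}(v)=2$, or $(G_1,L_1)\oplus_{(v_1,v_2)}(G_2,L_2)$, for pairs again lying in $\mathcal{W}$; since every pair in $\mathcal{W}$ is infeasible, in each case the smaller underlying graph (respectively each of the two summand graphs) carries a degree-list assignment on which it is not indicated colourable, hence is not indicated degree-choosable, and the bare-graph identities $G=D_v(G'')$, $G=T_v(G'')$, $G=G_1\oplus_{(v_1,v_2)}G_2$ give clauses (3), (4), (2) respectively.

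I do not expect a genuine obstacle: the corollary is essentially a repackaging of Theorem~\ref{thm-nic}. The only points needing a moment's care are (i) checking that the list assignments returned by Lemmas~\ref{lem-main1}, \ref{lem-main2} and \ref{lem-vs} are genuine \emph{degree}-list assignments (which those lemmas already record), (ii) arranging disjoint colour universes before forming a vertex sum in the first direction, and (iii) the reduction to connected $G$ together with the observation that $D_v$, $T_v$ and $\oplus$ preserve and reflect connectedness in the way used above. Of these, (iii) is the fussiest, but it is entirely routine.
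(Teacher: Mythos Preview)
Your proposal is correct and is exactly the approach the paper intends: the paper states Corollary~\ref{cor-nidc} with the single sentence ``As a consequence of Theorem~\ref{thm-nic}, we have the following corollary'' and gives no further proof, so you are simply spelling out the translation between $(G,L)$-pairs in $\mathcal{W}$ and bare graphs that the paper leaves implicit. Your care with degree-list assignments, colour disjointness before a vertex sum, and the (slightly fussy) disconnected case are all appropriate checks of details the paper takes for granted.
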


Recall that a connected graph $G$ is an expanded Gallai-tree if each expanded block of $G$ is a complete graph or a blow-up of an odd cycle of length at least 5, and each expanded block that is a blow up of an odd cycle of length at least 5 has at most one root-clique. 
Now we prove Theorem \ref{thm-indicateddegreechoosable}: A connected graph $G$ is not indicated degree-choosable if and only if $G$ is an expanded Gallai-tree.

\bigskip
{\bf Proof of Theorem \ref{thm-indicateddegreechoosable}}
    Assume $G$ is not indicated degree-choosable. We prove by induction on the number of vertices of $G$ that is $G$ is an expanded Gallai-tree. The case  $|V(G)|=1$ is trivial. Assume  $|V(G)| \ge 2$. If $G=G_1 \oplus_{(v_1,v_2)} G_2$ and $G_1,G_2$ are not indicated degree-choosable, then each expanded block of $G$ is an expanded block of $G_1$ or $G_2$. By induction hypothesis, $G_1,G_2$ are expanded Gallai-trees. Hence each expanded block of $G$ is a complete graph or a blow-up of an odd cycle, and $\mathcal{B}_G =\mathcal{B}_{G_1} \cup \mathcal{B}_{G_2}$, and $\pi_G(H) = \pi_{G_i}(H)$ if $H \in \mathcal{B}_{G_i}$.

    Assume $G$ is $2$-connected.
    
       If $G=D_v(G')$  and $G'$ is not indicated degree-choosable, then by induction hypothesis, $G'$ is an expanded Gallai-tree. If $H$ is an expanded block   of $G$, then either $v \notin V(H)$ and hence $H$ is   an expanded block of $G'$ or $v \in V(H)$, then $H=D_v(H')$, where $H'$ is an expanded block of $G'$. Also a clique-cut $K$ of $G$ is either a clique-cut of $G'$ or $K=D_v(K')$,  where $K'$ is a clique-cut of $G'$. Therefore each expanded block of $G$ is a complete graph or a blow-up of an odd cycle of length at least 5. Moreover, for $H \in \mathcal{B}_G$, either $H \in \mathcal{B}_{G'}$ and $\pi_G(H)=\pi_{G'}(H)$ or $H = D_v(H')$ for some $H' \in \mathcal{B}_{G'}$, and 
       \[
       \pi_G(H) = \begin{cases}
           D_v(\pi_{G'} (H')), &\text{ if   $v \in V(\pi_{G'} (H'))$}, \cr 
           \pi_{G'}(H'), &\text{ otherwise.}
       \end{cases}
       \]

       If $G=T_v(G')$, then $d_{G'}(v)=2$. Since $G$ is 2-connected, so is $G'$ and hence $v$ is contained in an odd cycle $C'$, which is contained in an expanded block $H'$ of $G'$. Then $H=T_v(H')$ is an expanded block of $G$.    So again, each expanded block of $G$ is a complete graph or a blow-up of an odd cycle. 
       If   $v$ is contained in a triangle $C'=[vuw]$ and $uw$ is a cliue-cut of $G'$, then $C=T_v(C') \in \mathcal{B}_G$  with $\pi_G(C)=\{u,w\}$.

Next we prove that if $G$ is an expanded Gallai-tree, then $G$ is not indicated degree-choosable, again by induction on the number of vertices.
 Assume that this is not true and $G$ is a minimum counterexample. Then $G$ is neither a complete graph nor an blow-up of an odd cycle. 
 
If $G$ has a pair of adjacent twins or a triple of 2-vertices, then $G=D_v(G')$ or $G=T_v(G')$. Each clique-cut $K$ of $G$ is either a clique-cut of $G'$ or $v \in V(K)$ and $G'$ has a clique-cut $K'$ such that $K=D_v(K')$. Similarly, each expanded block $H$ of $G$ is either an expanded block of $G'$ or $v \in V(H)$ and there is an expanded block $H'$ of $G'$ such that $H=D_v(H')$ or $H=T_v(H')$. Thus each expanded block of $G'$ is a complete graph or a blow-up of an odd cycle. Moreover,  $\pi_{G'}(H) =\pi_G(H)$ or is obtained from $\pi_G(H)$ by deleting the twin of $v$.  Hence $G'$ is an expanded Gallai-tree. 
By induction hypothesis, $G'$ is not indicated degree-choosable. By Corollary \ref{cor-nidc}, $G$ is not indicated degree-choosable. 

Assume $G$ has no adjacent twins and no triple of 2-vertices. Then $G$ has a cut-vertex and $G=G_1 \oplus_{(v_1,v_2)}G_2$, where each of $G_1,G_2$ is an expanded Gallai-tree. By induction hypothesis, each of $G_1,G_2$ is not indicated degree-choosable, and hence $G$ is not indicated degree-choosable.
\qed

\begin{cor}
    \label{cor-alg}
    There is a linear-time algorithm that determines of a graph $G$ is indicated degree-choosable.
\end{cor}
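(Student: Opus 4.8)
The plan is to invoke Theorem~\ref{thm-indicateddegreechoosable}, which reduces the question to recognizing expanded Gallai-trees. First, a disconnected graph is indicated degree-choosable exactly when each of its components is (Ann can play the components independently and Ben gains nothing by interleaving moves across components), so it suffices to decide, for a connected graph $G$, whether $G$ is an expanded Gallai-tree in time $O(|V(G)|+|E(G)|)$.

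By Corollary~\ref{cor-nidc}, being not indicated degree-choosable is captured by the reverses of the duplication, tripling and vertex-sum operations, which suggests a recursive recognizer. I would first compute the block-cut tree of $G$ by a single depth-first search; one checks that $G$ is an expanded Gallai-tree if and only if every biconnected component of $G$ is, so the task reduces to the $2$-connected case. For a $2$-connected graph $B$: if $B=K_2$, declare it not indicated degree-choosable; if $B$ has a pair of adjacent twins $\{u,u'\}$, delete $u'$ and recurse on $B-u'$ (the reverse of a duplication); if $B$ has a triple $[v_1,v_2,v_3]$ of degree-$2$ vertices, contract it to a single degree-$2$ vertex and recurse (the reverse of a tripling); and if $|V(B)|\ge 3$ and $B$ has none of these configurations, declare $B$ indicated degree-choosable. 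Correctness of the last case is the statement, extracted from the proof of Theorem~\ref{thm-nic}, that a $2$-connected graph on at least three vertices with no pair of adjacent twins and no triple of $2$-vertices is indicated degree-choosable; correctness of the two reductions is that, as in the proof of Theorem~\ref{thm-indicateddegreechoosable} and Lemma~\ref{lem-twins}, deleting one of a pair of adjacent twins, or contracting a triple of $2$-vertices, preserves the property of being an expanded Gallai-tree in both directions. Since each reduction strictly decreases $|V|$, there are $O(|V(G)|)$ reductions overall, and the answer is correct.

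To make this run in linear time one must avoid recomputing the global structure after each step. I would maintain, under the local changes caused by the reductions, the partition of the vertices into closed-neighbourhood classes (so a pair of adjacent twins is produced in $O(1)$), the set of degree-$2$ vertices together with the subgraph they induce (so a triple of $2$-vertices is produced, and an incidental induced $4$-cycle, which would force an immediate rejection, is detected), and enough of the block-cut information to restart locally; each deletion or contraction touches only a bounded neighbourhood, so these updates cost amortized $O(1)$ per incident edge. The genuinely delicate point is that deleting one of a pair of adjacent twins can destroy $2$-connectivity --- for instance $K_{2,3}$ with one added edge has adjacent twins whose deletion leaves a graph with a cut-vertex --- so after such a deletion one re-enters the block-decomposition phase on the affected fragment; a standard amortization over the (monotonically shrinking) graph shows that the total cost of all such re-decompositions is $O(|V(G)|+|E(G)|)$.

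I expect this amortization to be the main obstacle. Theorem~\ref{thm-indicateddegreechoosable} together with Corollary~\ref{cor-nidc} already yields a correct polynomial-time recognizer essentially for free, so the real work is to certify the $O(|V(G)|+|E(G)|)$ bound: a naive implementation would rerun the biconnected-component decomposition after every twin-deletion, and the obvious alternative of computing the expanded-block (clique-separator) decomposition directly is not known to be linear for arbitrary graphs. The way out is to exploit that for a graph that is actually being recognized as an expanded Gallai-tree the decomposition never does anything global, so either a verification fails locally and early, or it succeeds while the structure is being read off incrementally; turning this into a clean accounting of the running time is the step I would spend the most care on.
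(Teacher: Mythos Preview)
Your proposal is correct and takes essentially the same approach as the paper: repeatedly apply the reverses of the duplicating, tripling, and vertex-sum operations until none applies, and accept or reject according to whether only isolated vertices remain. The paper's own proof is a three-sentence sketch that declares the linear running time ``obvious''; your version supplies the implementation details and the amortization argument that the paper omits, so if anything you have filled in more than the paper does rather than less.
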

\begin{proof}
    We may assume that $G$ is a connected graph. We repeatedly apply  the reverse   of the duplicating operation, the tripling operation and vertex-sum operation, until no such revere operations can be applied. If we reach a set of isolated vertices, then $G$ is an expanded Gallai-tree and hence is not indicated degree-choosable. Otherwise, it is degree-choosable. It is obvious that this algorithm terminates in linear-time. 
\end{proof}

\section{Brooks' Theorem for indicated colouring}

\begin{definition}
    \label{def-HK}
    Assume $G$ is an expanded Gallai-tree. 
    Assume $K$ is a clique-cut of $G$ and $H \in \mathcal{B}_{G,K}$. Let $Z_{H,K} = V(H)-V(K)$ and $Q_{H,K}$ be the subgraph of $G$ induced by the union of $K$ and the vertex set of the connected component of $G-V(K)$ containing $Z_{H,K}$. 
    
     If $K$ is a clique-cut of $G$ with $\mathcal{B}_{G,K} \ne \emptyset$, then let $$H_K = G[\cup_{H \in \mathcal{B}_{G,K} }V(H)], Q_K =G[\cup_{H \in \mathcal{B}_{G,K} }Q_{H,K}] .$$
    See Fig. \ref{fig-13} for an example of a clique-cut $K$, and the graphs $H_K$ and $Q_K$.
  %  Let $Z_K = V(H_K)-V(K)$. 
\end{definition}

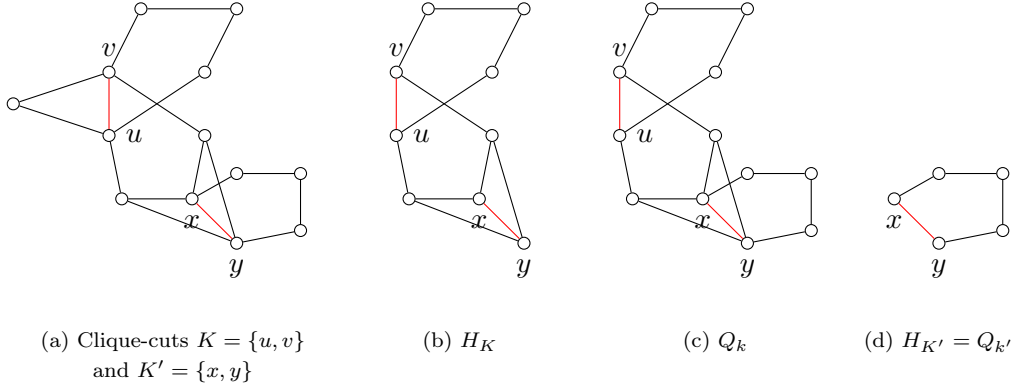
\begin{figure}
    \centering
\begin{tikzpicture}[scale=0.3, 
    dot/.style={circle, fill=white, draw=black, minimum size=4.5pt, inner sep=0pt}, % 默认点的样式
    label position/.style={above, below, left, right},
    scale=2.8, % Adjust scaling if nodes are too close
  ]

  % 默认点
  \node[dot, label=right:{$u$}] (a) at (-1.00, 0.00) {};
  \node[dot, label=above:{$v$}] (b) at (-1.00, 1.00) {};
  \node[dot, label=left:{}] (c) at (-2.50, 0.50) {};
  \node[dot, label=right:{}] (d) at (0.50, 1.00) {};
  \node[dot, label=below:{}] (g) at (-0.80, -1.00) {};
  \node[dot, label=above:{}] (e) at (1.00, 2.00) {};
  \node[dot, label=above:{}] (f) at (-0.50, 2.00) {};
  \node[dot, label=above:{}] (i) at (0.50, 0.00) {};
  \node[dot, label=below:{$x$}] (h) at (0.30, -1.00) {};
  \node[dot, label=below:{$y$}] (j) at (1.00, -1.70) {};
  \node[dot, label=below:{}] (k) at (1.00, -0.60) {};
  \node[dot, label=below:{}] (l) at (2.00, -1.50) {};
  \node[dot, label=above:{}] (m) at (2.00, -0.60) {};

 \node[below=1.2] (A) at (0, -1.50) {\shortstack{\scriptsize (a) Clique-cuts $K=\{u,v\}$ \\ \scriptsize and  $K'=\{x,y\}$}};
  %\node[below=1.2] (A) at (0, -1.50) {(a) Clique-cuts $K=\{u,v\}$ and $K'=\{x,y\}$};

  % 默认点和边
  \draw[red] (a) -- (b);
  \draw (a) -- (c);
  \draw (a) -- (d);
  \draw (a) -- (g);
  \draw (b) -- (c);
  \draw (b) -- (f);
  \draw (b) -- (i);
  \draw (d) -- (e);
  \draw (e) -- (f);
  \draw (g) -- (h);
  \draw (g) -- (j);
  \draw (h) -- (i);
  \draw[red] (h) -- (j);
  \draw (h) -- (k);
  \draw (i) -- (j);
  \draw (j) -- (l);
  \draw (l) -- (m);
  \draw (k) -- (m);

  % Draw $H_K$
  \node[dot, label=right:{$u$}] (u) at (-1.00+4.50, 0.00) {};
  \node[dot, label=above:{$v$}] (v) at (-1.00+4.50, 1.00) {};
  \node[dot, label=right:{}] (v3) at (0.50+4.50, 1.00) {};
  \node[dot, label=below:{}] (v6) at (-0.80+4.50, -1.00) {};
  \node[dot, label=above:{}] (v2) at (1.00+4.50, 2.00) {};
  \node[dot, label=above:{}] (v1) at (-0.50+4.50, 2.00) {};
  \node[dot, label=above:{}] (v4) at (0.50+4.50, 0.00) {};
  \node[dot, label=below:{$x$}] (v5) at (0.30+4.50, -1.00) {};
  \node[dot, label=below:{$y$}] (v7) at (1.00+4.50, -1.70) {};

  % Draw edges
  \draw[red] (u) -- (v);
  \draw (v) -- (v1);
  \draw (v) -- (v4);
  \draw (u) -- (v3);
  \draw (u) -- (v6);
  \draw (v4) -- (v5);
  \draw (v5) -- (v6);
  \draw[red]  (v5) -- (v7);
  \draw (v6) -- (v7);
  \draw (v4) -- (v7);
  \draw (v1) -- (v2); 
  \draw (v2) -- (v3);

  \node[below=1.2] (B) at (4.5, -1.50) {\scriptsize (b)   $H_K$};

  % Draw QK
  \node[dot, label=right:{$u$}] (u0) at (-1.00+8.00, 0.00) {};
  \node[dot, label=above:{$v$}] (v0) at (-1.00+8.00, 1.00) {};
  \node[dot, label=right:{}] (v30) at (0.50+8.00, 1.00) {};
  \node[dot, label=below:{}] (v60) at (-0.80+8.00, -1.00) {};
  \node[dot, label=above:{}] (v20) at (1.00+8.00, 2.00) {};
  \node[dot, label=above:{}] (v10) at (-0.50+8.00, 2.00) {};
  \node[dot, label=above:{}] (v40) at (0.50+8.00, 0.00) {};
  \node[dot, label=below:{$x$}] (v50) at (0.30+8.00, -1.00) {};
  \node[dot, label=below:{$y$}] (v70) at (1.00+8.00, -1.70) {};
  \node[dot, label=below:{}] (v80) at (1.00+8.00, -0.60) {};
  \node[dot, label=below:{}] (v100) at (2.00+8.00, -1.50) {};
  \node[dot, label=right:{}] (v90) at (2.00+8.00, -0.60) {};

  % Draw edges
  \draw[red] (u0) -- (v0);
  \draw (v0) -- (v10);
  \draw (v0) -- (v40);
  \draw (u0) -- (v30);
  \draw (u0) -- (v60);
  \draw (v40) -- (v50);
  \draw (v50) -- (v60);
  \draw[red] (v50) -- (v70);
  \draw (v60) -- (v70);
  \draw (v40) -- (v70);
  \draw (v10) -- (v20); 
  \draw (v20) -- (v30);
  \draw (v50) -- (v80);
  \draw (v80) -- (v90);
  \draw (v90) -- (v100);
  \draw (v70) -- (v100);

  \node[below=1.2] (c) at (8.5, -1.50) {\scriptsize (c) $Q_k$};

   % Draw QK
  \node[dot, label=below:{$x$}] (v50) at (0.30+11.00, -1.00) {};
  \node[dot, label=below:{$y$}] (v70) at (1.00+11.00, -1.70) {};
  \node[dot, label=below:{}] (v80) at (1.00+11.00, -0.60) {};
  \node[dot, label=below:{}] (v100) at (2.00+11.00, -1.50) {};
  \node[dot, label=right:{}] (v90) at (2.00+11.00, -0.60) {};

  % Draw edges
  \draw[red] (v50) -- (v70);
  \draw (v50) -- (v80);
  \draw (v80) -- (v90);
  \draw (v90) -- (v100);
  \draw (v70) -- (v100);

  \node[below=1.2] (d) at (12, -1.50) {\scriptsize (d) $H_{K'}=Q_{k'}$};
\end{tikzpicture}

\caption{ A graph $G$ with clique-cuts $K$ and $K'$,   with subgraphs  $H_K$, $H_{K'}$, $Q_K$ and $Q_{k'}$}
\label{fig-13}
\end{figure}

Note that for any $H \in \mathcal{B}_{G,K}$,  $H \subseteq Q_{H,K}$ and equality $H=Q_{H,K}$ holds  if and only if $H$ does not contain any other clique cut of $G$.

 \begin{theorem}
     \label{thm-charact}
     Assume $G$ is an  expanded Gallai-tree, and with $\Delta(G) \le r$. 
     Then the following are equivalent:  
     \begin{enumerate}
     \item[(1)] $G$ has a degree-list assignment $L$ such that $(G,L)$ is infeasible and $L(v) \subseteq [r]$ for each vertex $v$.
     \item[(2)] For each clique-cut $K$ with $\mathcal{B}_{G,K} \ne \emptyset$,   there is degree-list assignment   $L_K$ for $H_K$ such that
     $(H_K,L_K)$ is infeasible, $L_K(v)\subseteq [r]$ for each vertex $v$, and $L_K(v) \subseteq [r-t_K]$ if $v \in V(K)$,   where $t_K = |\cap_{v \in V(K)} N_G(v)|$. 
     \end{enumerate} 
 \end{theorem}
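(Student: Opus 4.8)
The plan is to prove the two implications separately: the forward one $(1)\Rightarrow(2)$ is a short ``restriction'' argument, while $(2)\Rightarrow(1)$ is a structural induction whose only genuine difficulty is keeping every colour inside $[r]$ while gluing the pieces.

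For $(1)\Rightarrow(2)$, fix a clique-cut $K$ with $\mathcal B_{G,K}\neq\emptyset$ and write $S_0=\bigcap_{v\in V(K)}N_G(v)$, so $|S_0|=t_K$. First I would check that $S_0\cap V(H_K)=\emptyset$: every $H\in\mathcal B_{G,K}$ is a blow-up $C[p]$ of an odd cycle of length $\ge 5$ with $K=K(u)\cup K(v)$ for an edge $uv$ of $C$; since $C$ has no triangle, the vertices of $H$ adjacent to all of $K(u)$ lie in $K(u)\cup K(v)\cup K(u')$ and those adjacent to all of $K(v)$ lie in $K(u)\cup K(v)\cup K(v')$ (with $u',v'$ the other $C$-neighbours of $u,v$), and no vertex of a clique is adjacent to itself, so no vertex of $H$ — hence of $H_K$ — is adjacent to every vertex of $K$. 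Next I would apply Lemma~\ref{lem-remaining}(3) with $X=V(G)\setminus V(H_K)$ to get an $L$-colouring $\phi$ of $G[X]$ with $(H_K,L^\phi)$ infeasible; by Lemma~\ref{lem-con} the list assignment $L^\phi$ is tight. For $v\in V(K)$ one has $d_{H_K}(v)=d_G(v)-|N_G(v)\cap X|$ while $|L^\phi(v)|=|L(v)|-|L(v)\cap\phi(N_G(v)\cap X)|$, so tightness forces $\phi$ to be injective on $N_G(v)\cap X$ with $\phi(N_G(v)\cap X)\subseteq L(v)$. Hence the $t_K$ vertices of $S_0$ (each lying in $N_G(v)\cap X$ for \emph{every} $v\in V(K)$) receive $t_K$ distinct colours, forming a set $S$ with $|S|=t_K$ and $S\subseteq\bigcap_{v\in V(K)}L(v)$, so $L^\phi(v)\subseteq L(v)\setminus S\subseteq[r]\setminus S$ for $v\in V(K)$. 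Composing $L^\phi$ with a bijection of $[r]$ that carries $[r]\setminus S$ onto $[r-t_K]$ yields the required $L_K$.

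For $(2)\Rightarrow(1)$, I would induct on $|V(G)|$, using the description of expanded Gallai-trees from Theorem~\ref{thm-nic}/Corollary~\ref{cor-nidc}: $G$ is $K_1$, a vertex-sum $G_1\oplus_{(v_1,v_2)}G_2$, a duplication $D_v(G')$, or a tripling $T_v(G')$ of smaller expanded Gallai-trees. The base cases ($K_1$ and complete graphs, where $\mathcal B_G=\emptyset$ so (2) is vacuous, and the constant list $[\,|V(G)|-1\,]\subseteq[r]$ — valid since $|V(G)|-1=\Delta(G)\le r$ — makes $(G,L)$ non-$L$-colourable) are immediate. In the main case $G$ has a clique-cut; here I would pick a ``leaf'' clique-cut $K$ of the clique-cut decomposition, so that one side $Q$ of $G$ at $K$ (an induced subgraph meeting the other side in exactly $V(K)$) contains no further clique-cut of $G$ in its interior, i.e.\ $Q$ is a single complete block attached along $K$, or $Q=Q_{H,K}$ for a unique odd-cycle blow-up block $H$ with root clique $K$ (possibly carrying pendant complete blocks off $V(H)\setminus V(K)$). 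Let $G^-=G-(V(Q)\setminus V(K))$, a smaller expanded Gallai-tree which still satisfies (2); by induction it has an infeasible $L^-$ with $L^-(w)\subseteq[r]$. For the stub $Q$: when $\mathcal B_{G,K}\neq\emptyset$, hypothesis (2) supplies infeasible $L_K$ on $H_K$ with $L_K(v)\subseteq[r]$ and $L_K(v)\subseteq[r-t_K]$ for $v\in V(K)$, which I would extend over the pendant complete blocks of $Q$ (each absorbs a fresh colour, available inside $[r]$ because $\Delta\le r$ and colours far from the relevant closed neighbourhood may be reused), and when $\mathcal B_{G,K}=\emptyset$ one uses the constant-list construction on the complete stub. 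Finally I would reassemble $G$ from $G^-$ and $Q$ by reintroducing the vertices of $V(K)$ one at a time via (reverses of) the operations $D$, $T$, $\oplus$, arranging the colour budgets so that each $v\in V(K)$ has its $Q$-side list inside $[r-t_K]$ and its $G^-$-side list occupying at most $t_K$ colours disjoint from it — the identity $t_K=|\bigcap_{v\in V(K)}N_G(v)|$ being exactly what makes these demands compatible inside $[r]$ — and then invoke Lemmas~\ref{lem-main1}, \ref{lem-main2}, \ref{lem-vs} to certify $(G,L)\in\mathcal W$, hence infeasible.

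The hard part is the bookkeeping in $(2)\Rightarrow(1)$: one must peel the clique-cut decomposition off one leaf at a time and simultaneously (i) keep all colours in $[r]$, (ii) respect the disjointness required by every vertex-sum used to re-glue a clique-cut, and (iii) respect the fresh-colour condition of every duplication. Reconciling (i)--(iii) is precisely what the ``$L_K(v)\subseteq[r-t_K]$ on $V(K)$'' clause of (2) is designed to enable; most of the work is verifying that along the induction these constraints can always be met and that (2) is inherited by $G^-$.
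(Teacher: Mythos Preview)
Your $(1)\Rightarrow(2)$ argument is correct and essentially identical to the paper's: restrict to $H_K$ via Lemma~\ref{lem-remaining}, observe that the $t_K$ common outside neighbours of $V(K)$ get $t_K$ distinct colours removed from every $L^\phi(v)$ with $v\in V(K)$, then permute. Your extra care in checking $S_0\cap V(H_K)=\emptyset$ and in invoking tightness explicitly is fine.

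For $(2)\Rightarrow(1)$ there is a genuine gap. You propose to reassemble $G$ from $G^-$ and the stub $Q$ ``by reintroducing the vertices of $V(K)$ one at a time via (reverses of) the operations $D$, $T$, $\oplus$''. But $\oplus$ glues along a \emph{single} vertex, and $D$ creates adjacent \emph{twins}; there is no combination of these operations that glues two pieces along a clique $K$ whose vertices are \emph{not} twins on both sides. When $K=K(u)\cup K(v)$ is a root clique of some $H\in\mathcal B_{G,K}$, vertices in $K(u)$ and $K(v)$ have different neighbours inside $H_K$ (namely $K(u')$ versus $K(v')$), so they are not twins in $Q$, and your collapse-glue-reduplicate scheme cannot produce $G$. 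More generally, trying to follow the Corollary~\ref{cor-nidc} decomposition step by step runs into the problem that a duplication $D_{v,c}$ needs a colour $c$ missing from $L'(x)$ for \emph{every} $x\in N_{G'}[v]$; condition~(2) does not obviously guarantee such a $c$ exists inside $[r]$ at every step of an arbitrary build sequence.

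The paper sidesteps this entirely: it never reassembles via $D,T,\oplus$. Instead it runs a two-case induction on $|V(G)|$. In Case~1 (some clique-cut $K$ has $\mathcal B_{G,K}=\emptyset$) it observes that vertices of $K$ are then adjacent twins in every piece $Q_{H,K}$, gets infeasible lists on each piece by induction, permutes colours so that the $K$-lists in different pieces overlap in exactly a common $(|V(K)|-1)$-set $A$, and sets $L=\bigcup_H L_H$. In Case~2 (every clique-cut has $\mathcal B_{G,K}\ne\emptyset$) it picks $K$ with $Q_K$ minimal, shows $Q_K=H_K$, gets $L'$ on $G'=G-(V(H_K)\setminus V(K))$ by induction and $L_K$ on $H_K$ from hypothesis~(2), aligns colours, and sets $L=L'\cup L_K$. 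In both cases infeasibility of $(G,L)$ is checked \emph{directly} from Lemma~\ref{lem-remaining}: for every vertex $v$ one exhibits a colour $c\in L(v)$ such that $(G-v,L^{v\to c})$ is infeasible, reducing to the infeasibility of the already-handled pieces. This direct verification is what replaces your missing ``glue along a clique'' operation.
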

 \begin{proof}
$ (1) \Rightarrow (2)$: Let $L$ be   a degree-list assignment of $G$ such that  $(G,L)$ is infeasible and $L(v) \subseteq [r]$ for each vertex $v$. Assume $K$ is  a  clique-cut  of $G$  with  $  \mathcal{B}_{G,K} \ne \emptyset$.  By Lemma \ref{lem-remaining},  there is  an $L$-colouring $\phi$ of $ G-V(H_K)$ such that $(H_K, L^{\phi})$ is infeasible. Note that 
for any $x,y \in V(K)$, $N_{G}(x) - V(H_K)=N_{G }(y)-V(H_K) = \cap_{v \in V(K)}N_G(v)$. Let $X= \cap_{v \in V(K)}N_G(v)$. By definition $|X|=t_K$. By permutation of colours, we may assume that $[r]-\phi(X) = [r-t_K]$.   Let $L_K=L^{\phi}$, we obtain the required degree-list assignment of $G$.

 $ (2) \Rightarrow (1)$:
 If $G$ has no clique-cut, or $G$ has only one clique-cut $K$ and $H_K=G$, then 
 the condition and the conclusion are the same, and there is nothing to be proved. Thus we assume that $G$ has a clique-cut $K$ such that $H_K \ne G$.

\bigskip
\noindent
{\bf Case 1} $G$ has a clique-cut $K$ such that  $ \mathcal{B}_{G,K} = \emptyset$. 

Choose such a clique-cut $K$, and let $\mathcal{A}$ be the set of   expanded blocks $H$ of $G$ containing $K$. For  each $H \in \mathcal{A}$, there is a degree-list assignment $L_H$ of $Q_{H,K}$ such that $(Q_{H,K},L_H)$ is infeasible and $L_H(v) \subseteq [r]$ for each vertex $v$. As vertices in $K$ are adjacent twins in $Q_{H,K}$, by Lemma \ref{lem-twins}, there is  a set $C_H$ of $d_{Q_{H,K}}(v)$ colours (for $v \in V(K)$) such that $L_H(x) = C_H$ for all $x \in V(K)$. Let $S$ be a subset of $V(K)$ of size $|V(K)|-1$. By Lemma \ref{lem-remaining}, there is  
a $L_H$-colouring $\psi$ of   $S  $ such that $(Q_{H,K}-S, L_H^{\psi})$ is infeasible. Let  $A_H=\psi(S)$, which is a set of $|V(K)|-1$ colours contained in $C_H$. By a permutation of colours, we may assume that for any two distinct expanded blocks $H,H'$ containing $K$, $C_H \cap C_{H'} = A_H=A_{H'}$. Let $L = \cup_{H \in \mathcal{A} }  L_H$.

Now we show that $(G,L)$ is infeasible. By Lemma \ref{lem-remaining}, it suffices to find, for each vertex $v$ of $G$ a colour $c \in L(v)$ such that $(G-v, L^{v \to c})$ is infeasible. 
\begin{enumerate}
    \item If $v \in V(K)$, then let $c$ be any colour in $A_H$ defined above.
    \item If $v \in V(G)-V(K)$, then $v \in Q_{H,K}$ for some expanded block $H$ containing $K$. Let $c$ be a  colour in $L_H(v) $ such that $(Q_{H,K}-v, L_H^{v \to c})$ is infeasible.
\end{enumerate} 
It follows from induction hypothesis 
that $L^{v \to c}$ is a degree-list assignment of $G-v$   and $(G-v,L^{v \to c})$ is infeasible.

\bigskip
\noindent
{\bf Case 2} 
$  \mathcal{B}_{G,K} \ne \emptyset$ for each clique-cut $K$ of $G$. 

Let $K$ be a clique-cut such that  $Q_K$ is minimal. Assume $H \in \mathcal{B}_{G,K}$.
If $H$ contains another clique-cut $K'$ of $G$,  then for any $H' \in \mathcal{B}_{G,K'}$, $H'-V(K') \subseteq V(Q_{H,K})$ and hence $Q_{H', K'} \subsetneq  Q_{H,K}$. 
Hence $Q_{K'} \subsetneq Q_K$, contrary to the choice of $K$. 

So for any $H \in \mathcal{B}_{G,K}$, $H$ contains no other clique-cut of $G$, $Q_{H,K}=H$ and hence $Q_K = H_K$. 
 
By our assumption, $G-V(H_K) \ne \emptyset$. By the induction hypothesis,
there is a degree-list assignment $L'$ of $G'=G-(V(H_K) - V(K))$ such that $(G',L')$ is infeasible and $L'(v) \subseteq [r]$ for each vertex $v$ of $G'$. Since vertices in $K$ are adjacent twins in $G'$, there is a set $C'$ of $d_{G'}(v)$ colours ($v \in V(K)$) such that $L'(x)=C'$ for all $x \in V(K)$.  

Let $S$ be a subset of $V(K)$ of size $|V(K)|-1$. By Lemma \ref{lem-remaining}, there is  
a $L'$-colouring $\psi$ of   $S  $ such that $(G'-S, L'^{\psi})$ is infeasible. Let  $A=\psi(S)$, which is a set of $|V(K)|-1$ colours contained in $C'$. By a permutation of colours, we may assume that  $[r]-(C'-A) = [r-t_K]$. By assumption, there is a degree-list assignment $L_K$ of $H_K$ such that $(H_K,L_K)$ is infeasible, $L_K(v) \subseteq [r]$ for each vertex $v$ of $H_K$ and $L_K(v) \subseteq [r-t_K]$ for each vertex $v \in V(K)$. 
By Lemma \ref{lem-remaining}, there is a $L_K$-colouring $\tau$ of $S$ such that $(H_K-S, L_K^{\tau})$ is infeasible. By a permutation of colours, we may assume that $\tau(S) = A$. Let $L = L' \cup L_K$. Then $L$ is a degree-list assignment of $G$ with $L(v) \subseteq [r]$ for each vertex $v$ of $G$, and a similar argument as above shows that    $(G,L)$ is infeasible. 

This completes the proof of Theorem \ref{thm-charact}.%It remains to show that $(G,L)$ is infeasible. By Lemma \ref{lem-remaining}, it suffices to show that for any vertex $v$ of $G$, there is a colour $c \ in L(v)$ such that $(G-v, L^{v \to c})$ is infeasible. 
%If $v \in V(K)$, then let $c$ be any colour in $A$. If $v \in Z$, then let $c \in L(v) =L_{H'}(v)$ be the colour such that $(H', L_{H'})$ is infeasible. If $v \in V(G-V(H'))$, then let $c \in L'(v) = L(v)$ be the colour such that $(G-Z-v, L'^{v \to c})$ is infeasible. It follows by induction hypothesis that $(G-v, L^{v \to c})$ is infeasible. Hence $(G,L)$ is infeasible.
 \end{proof}

\begin{lem}
    \label{lem-k}
    Assume $H$ is a graph consisting of $k$ odd cycles $C_1,C_2, \ldots, C_k$ sharing one edge $uv$. Let $p: V(H) \to \{1,2,\ldots \}$ and $G=H[p]$ is a blow-up of $H$ in which each vertex $x$ is blown up into a clique $K(x)$ of order $p(x)$. Let $K =K(u) \cup K(v)$.  Let $r \ge s$ be positive integers such that $d_G(v) \le r$ for each vertex $v$ of $G$ and $d_G(v) \le s$ for $v \in V(K)$. Then the following are equivalent:
    \begin{enumerate}
        \item[(i)] There is a degree-list assignment $L$ of $G$ such that $(G,L)$ is infeasible, $L(v) \subseteq [r]$ for $v \in V(G)$ and   $L(v) \subseteq [s]$ for  $v \in V(K)$.
        \item[(ii)] There is a multifold coloring $f $ of $H^2$ such that $|f(x)|=p(x)-1$  and $f(x)\subseteq [r] - \{i, k+1\}$ if $x \in V(C_i) - N_H[\{u,v\}]$,  and $f(x) \subseteq [s] - \{1,2,\ldots, k+1\}$ if $x \in N_H[\{u,v\}]$.  Here $H^2$ is the graph obtained from $H$ by adding edges connecting pairs of vertices of distance $2$ in $H$.
        \end{enumerate}  
\end{lem}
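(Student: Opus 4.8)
The plan is to translate the existence of an infeasible degree-list assignment of the blow-up $G=H[p]$ into a combinatorial colouring condition on $H^2$, exploiting the structure theorem (Theorem~\ref{thm-nic}) together with the characterization of infeasible pairs on blow-ups of odd cycles. First I would fix notation: write $u,v$ for the shared edge, and let $C_i$ have vertex sequence $u, v, x^{(i)}_1, \dots, x^{(i)}_{2k_i}$ (the cycle closing back to $u$), so that $x^{(i)}_1$ is a neighbour of $v$ and $x^{(i)}_{2k_i}$ a neighbour of $u$. The graph $H$ is essentially the graph called a "rooted cycle" earlier, and $G=H[p]$ is the blow-up with $K=K(u)\cup K(v)$ a clique. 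By Corollary~\ref{cor-clique} (the adjacent-twin/clique version of Lemma~\ref{lem-twins}), for any infeasible $(G,L)$ the vertices of each clique $K(x)$ of twins share a common list, so $L$ is constant on each $K(x)$; call that common set $L_x$, with $|L_x|=d_G(v)$ for $v\in K(x)$. So an infeasible degree-list assignment of $G$ is equivalent to an assignment $x\mapsto L_x$ of colour sets to the vertices of $H$ with the right cardinalities, such that the induced pair on $G=H[p]$ is infeasible.

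The core of the argument is to show that such an assignment $\{L_x\}$ makes $(G,L)$ infeasible if and only if the "complement" sets behave like a proper multifold colouring of $H^2$. The direction $(i)\Rightarrow(ii)$: take an infeasible $(G,L)$; by Lemma~\ref{lem-triangle} applied to each theta-subgraph (two of the cycles $C_i,C_j$ together with the edge $uv$), and more generally by repeatedly using Lemma~\ref{lem-remaining}, Lemma~\ref{lem-cycle}, Lemma~\ref{lem-color} and Corollary~\ref{cor-leaf}, I would extract a forced colour structure: on each cycle $C_i$ there is a "missing" pattern forcing each interior clique $K(x^{(i)}_j)$ to avoid two specific colours, one common to all of $C_i$ (play the role of "$i$") and one common to all cycles (the role of "$k+1$"), while $K(u),K(v)$ avoid all of $\{1,\dots,k+1\}$. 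Define $f(x) = [r]\setminus L_x$ for interior vertices and $f(x)=[s]\setminus L_x$ for $x\in N_H[\{u,v\}]$; I expect to show $|f(x)|=p(x)-1$ from the degree count ($d_G$ of a vertex in $K(x)$ equals $p(x)-1$ plus the sizes of neighbouring cliques), and that adjacency in $H^2$ forces $f(x)\cap f(y)=\emptyset$ — adjacent cliques in $G$ must have disjoint... wait, actually equal-ish lists — so the constraint becomes exactly that $f$ is a proper multifold colouring of the square $H^2$ (distance-$2$ vertices in $H$ have overlapping closed neighbourhoods among the blow-up cliques). The converse $(ii)\Rightarrow(i)$ is a direct construction: given the multifold colouring $f$, set $L_x=[r]\setminus f(x)$ (or $[s]\setminus f(x)$ near $K$), check cardinalities, and verify infeasibility by exhibiting that $(G,L)$ arises from $\mathcal{W}$ via the tripling and duplication operations (Example~\ref{example-1}(6) is the template), hence is infeasible by Theorem~\ref{thm-nic}.

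I would organize the write-up as: (a) reduce to constant-on-cliques list assignments via Corollary~\ref{cor-clique}; (b) compute $d_G$ of each clique vertex to fix the cardinality bookkeeping; (c) prove $(ii)\Rightarrow(i)$ by the explicit construction and membership in $\mathcal{W}$; (d) prove $(i)\Rightarrow(ii)$ by the forced-colour analysis on each odd cycle and each theta-subgraph. The main obstacle I anticipate is step (d): pinning down *why* the two forbidden colours on cycle $C_i$ split cleanly into "one shared across all of $C_i$" and "one shared across all $k$ cycles at the edge $uv$." This requires carefully chaining Lemma~\ref{lem-triangle} over all pairs $C_i,C_j$ and checking the common colour at $\{u,v\}$ is the *same* across all pairs (so it really is a single colour "$k+1$"), plus handling the interface vertices in $N_H[\{u,v\}]$ where the degree is larger because of the shared clique $K$. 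The index shift giving $[r]\setminus\{i,k+1\}$ rather than $[r]\setminus\{i,j\}$ is exactly the bookkeeping that makes $H^2$ (not $H$) the right host graph, and getting the "$+1$" offsets consistent between the $[r]$-part and the $[s]$-part will need care.
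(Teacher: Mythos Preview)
Your overall strategy---reduce to constant lists on each clique $K(x)$ via the twin lemma, identify the unique infeasible list $L'$ on the base graph $H$, and then translate the passage from $H$ to $G=H[p]$ into a colouring condition on $H^2$---is exactly the paper's approach. The forced-colour analysis you outline in step~(d) is essentially a proof of the uniqueness of $L'$ (up to colour permutation), which the paper uses as well.

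The genuine gap is your definition of $f$. You set $f(x)=[r]\setminus L_x$ (or $[s]\setminus L_x$ near $K$) and hope that $|f(x)|=p(x)-1$. But $|L_x|=d_G(w)$ for $w\in K(x)$, which equals $\sum_{y\in N_H[x]}p(y)-1$, so $|[r]\setminus L_x|=r-\sum_{y\in N_H[x]}p(y)+1$, and this is \emph{not} $p(x)-1$ in general. The same cardinality failure wrecks your construction in the direction $(ii)\Rightarrow(i)$: setting $L_x=[r]\setminus f(x)$ does not give a list of size $d_G(w)$, so you do not even obtain a tight degree-list assignment. The correct object is different: by repeatedly applying Lemma~\ref{lem-twins} to the adjacent twins inside each $K(x)$, any infeasible $(G,L)$ is obtained from $(H,L')$ by $p(x)-1$ duplications at each vertex $x$, and $f(x)$ should be the set $Z_x$ of \emph{new colours used in those duplications}. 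Then $|Z_x|=p(x)-1$ automatically, and the final list is $L_x=L'(x)\cup\bigcup_{y\in N_H[x]}Z_y$. The duplication constraint ``$c\notin L(y)$ for all $y\in N_{G}[v]$'' forces $Z_x\cap Z_z=\emptyset$ whenever $x,z$ are adjacent in $H^2$, and forces $Z_x$ to avoid $\bigcup_{y\in N_H[x]}L'(y)$, which is $\{i,k+1\}$ for $x\in V(C_i)\setminus N_H[\{u,v\}]$ and $\{1,\dots,k+1\}$ for $x\in N_H[\{u,v\}]$. The bound $L(w)\subseteq[s]$ for $w\in K$ is what forces $Z_x\subseteq[s]$ when $x\in N_H[\{u,v\}]$. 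With this definition of $f$ both directions become a direct bookkeeping check, and the obstacle you anticipate in step~(d) disappears.
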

\begin{proof}
    Let $L'$ be the degree list assignment of $H$ defined as $L'(u)=L'(v) = \{1,2,\ldots, k+1\}$, and $L'(x) = \{i,k+1\}$ for $x \in V(C_i)-\{u,v\}$.
    Up to a permutation of colours, $L'$ is the   unique degree list assignment   of $H$ such that $(H,L')$ is infeasible (see the example graph in Fig. \ref{fig-5}). 
    Thus  the required $L$ exists if and only if  $(G,L)$ can be obtained from $(H, L')$  by a sequence of duplication operations. The order of the duplication is irrelevant. For each vertex $x$ of $H$, it is duplicated $p(x)-1$ times. Hence, a set $Z_x$ of $p(x)-1$ colors need to be chosen. These colours are added to the lists $L'(y)$ for  all $y \in N_H[x]$.  Hence if $x \in N_H[\{u,v\}]$, then  $Z_x \subseteq [s] - L'(u)$. Otherwise, $Z_x \subseteq [r]-L'(x)$. Moreover, if $xy$ is an edge or $x,y$ have a common neighbor $z$, then $Z_x \cap Z_y = \emptyset$. Therefore $f(x)=Z_x$ is a multifold colouring of $H^2$ as stated in (ii).
\end{proof}

Theorem \ref{thm-charactIC} below is a consequence of Theorem \ref{thm-charact} and Lemma \ref{lem-k}, and characterizes IC-Brooks graphs.

\begin{theorem}
     \label{thm-charactIC}
     Assume $G$ is an $r$-regular expanded Gallai-tree. Then $G$ is an IC-Brooks graph if  the following hold: \begin{itemize}
     \item  For each   is clique-cut $K$ with $\mathcal{B}_{G,K} \ne \emptyset$, if  $H_K=H[p]$, where $H$ consists of a family of $k$ odd cycles $C_1,C_2, \ldots, C_k$ sharing an edge $uv$, and $p: V(H) \to \{1,2,\ldots\}$, then  
     there is a multifold colouring $f$ of $H_K^2$ with $|f(x)|=p(x)-1$ and   
     $f(x) \subseteq [r] -\{i, k+1\}$ for $x \in V(C_i) - N_H[\{u,v\}]$
     and   $f(x) \subseteq [r-t_K]-\{1,2,\ldots, k+1\}$ for $x \in N_H[\{u,v\}]$, where     $t_K = |\cap_{v \in V(K)} N_{G}(v)|$. T
     \end{itemize} 
\end{theorem}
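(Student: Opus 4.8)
The plan is to obtain Theorem~\ref{thm-charactIC} by reading off the chain of equivalences
\[
\text{``}G\text{ is IC-Brooks''} \iff \text{condition (1) of Theorem~\ref{thm-charact}} \iff \text{condition (2) of Theorem~\ref{thm-charact}} \iff \text{the displayed multifold-colouring condition,}
\]
where the first equivalence uses that $G$ is $r$-regular, the middle one is Theorem~\ref{thm-charact} invoked with this $r$, and the last one comes from applying Lemma~\ref{lem-k} to each relevant $H_K$.

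First I would handle the first equivalence. Since $G$ is connected with $\Delta(G)=r$, one has $\chi_i(G)\le r+1$, so $G$ is an IC-Brooks graph if and only if $G$ is not indicated $r$-colourable, i.e.\ if and only if $(G,L_0)$ is infeasible, where $L_0(v)=[r]$ for every $v$ (note $L_0$ is a degree-list assignment because $G$ is $r$-regular). Conversely, if $L$ is \emph{any} degree-list assignment of $G$ with $L(v)\subseteq[r]$ for all $v$ and $(G,L)$ infeasible, then by Lemma~\ref{lem-con} $L$ is tight, so $|L(v)|=d_G(v)=r$, hence $L(v)=[r]$ for every $v$, i.e.\ $L=L_0$. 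Therefore condition (1) of Theorem~\ref{thm-charact} is literally equivalent to ``$G$ is an IC-Brooks graph''. Applying Theorem~\ref{thm-charact} then reduces the problem to showing that its condition (2) --- for every clique-cut $K$ with $\mathcal B_{G,K}\neq\emptyset$, $H_K$ admits a degree-list assignment $L_K$ with $(H_K,L_K)$ infeasible, $L_K(v)\subseteq[r]$ in general and $L_K(v)\subseteq[r-t_K]$ on $V(K)$ --- is equivalent to the multifold-colouring condition stated in Theorem~\ref{thm-charactIC}.

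For the remaining equivalence I would fix a clique-cut $K$ with $\mathcal B_{G,K}\neq\emptyset$ and apply Lemma~\ref{lem-k} to $H_K$. For this I must first record the structural description of $H_K$: every $H\in\mathcal B_{G,K}$ is an expanded block that is a blow-up of an odd cycle of length at least $5$ with root-clique $K$, so $K=K(u)\cup K(v)$ for a cycle-edge $uv$, and contracting each blow-up clique of $H_K$ back to a single vertex identifies $H_K$ with $H[p]$, where $H$ is the family of $k=|\mathcal B_{G,K}|$ odd cycles $C_1,\dots,C_k$ pasted along the common edge $uv$ and $p$ records the blow-up sizes (in particular $p(u)=|K(u)|$ and $p(v)=|K(v)|$ are the same for all the blocks, since they all contain the set $K$). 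Next I would verify the numerical hypotheses of Lemma~\ref{lem-k} with the given $r$ and with $s=r-t_K$: since $H_K$ is an induced subgraph of the $r$-regular graph $G$, $d_{H_K}(x)\le r$ for every $x$; and for $x\in V(K)$ the observation already used in the proof of Theorem~\ref{thm-charact} --- that $N_G(x)-V(H_K)=\bigcap_{v\in V(K)}N_G(v)$, a set of size $t_K$ --- gives $d_{H_K}(x)=r-t_K=s$. Lemma~\ref{lem-k} then says that a degree-list assignment $L_K$ of $H_K$ as in condition (2) exists if and only if there is a multifold colouring $f$ with $|f(x)|=p(x)-1$, $f(x)\subseteq[r]-\{i,k+1\}$ for $x\in V(C_i)-N_H[\{u,v\}]$ and $f(x)\subseteq[r-t_K]-\{1,\dots,k+1\}$ for $x\in N_H[\{u,v\}]$ --- exactly the condition in Theorem~\ref{thm-charactIC}. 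Quantifying over all clique-cuts $K$ with $\mathcal B_{G,K}\neq\emptyset$ and stringing together the three equivalences finishes the proof.

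The step I expect to be the genuine obstacle is the structural bookkeeping: checking that $H_K$ is really of the form $H[p]$ for a family of odd cycles glued along a single edge --- in particular that the partition $K=K(u)\cup K(v)$ is consistent across all expanded blocks in $\mathcal B_{G,K}$, which is where the ``elementary clique-cut'' clause in the definition of root-clique does its work --- together with the clean identification of the exceptional sets $N_H[\{u,v\}]$ and the degree count $d_{H_K}(x)=r-t_K$ on $V(K)$. Once that is in place, the colour-set restrictions ($[r]-\{i,k+1\}$ off the root edge, $[r-t_K]-\{1,\dots,k+1\}$ on it) are merely the dictionary provided by Lemma~\ref{lem-k}, and nothing more is required. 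I would also note that the same chain of equivalences is reversible, so the stated condition in fact characterizes IC-Brooks graphs among $r$-regular expanded Gallai-trees, in line with the discussion preceding the theorem.
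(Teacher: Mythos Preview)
Your approach is correct and is exactly what the paper does: it states that Theorem~\ref{thm-charactIC} ``is a consequence of Theorem~\ref{thm-charact} and Lemma~\ref{lem-k}'' without further elaboration, and your chain of equivalences (IC-Brooks $\Leftrightarrow$ condition~(1) via $r$-regularity and Lemma~\ref{lem-con}, then (1)~$\Leftrightarrow$~(2) by Theorem~\ref{thm-charact}, then (2)~$\Leftrightarrow$ the multifold-colouring condition by applying Lemma~\ref{lem-k} to each $H_K$ with $s=r-t_K$) spells this out precisely. One small remark: the statement of Theorem~\ref{thm-charactIC} writes ``$H_K^2$'' where Lemma~\ref{lem-k} has ``$H^2$''; since the colouring $f$ is indexed by vertices of $H$ (via $|f(x)|=p(x)-1$ and $x\in V(C_i)$), the intended object is $H^2$, and your reading is the correct one.
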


Assume $r$ is a fixed integer and $G$ is an $r$-regular expanded Gallai-tree. For each clique-cut $K$ of $G$, if $\mathcal{B}_{G,K} \ne \emptyset$, then $H_K=H[p]$, where $H$ consists of a family of $k$ odd cycles $C_1,C_2, \ldots, C_k$ sharing an edge $uv$, and is a graph with tree-width at most $2r$. Hence there is a linear-time algorithm to determine if the multfold colouring $f$ desribed in Theorem \ref{thm-charactIC} exists. Thus we have the following corollary.

\begin{cor}
    \label{cor-alg}
Let $r$ be a fixed positive integer.    Given an $r$-regular expanded Gallai-tree,   there is a linear-time algorithm to determine if $G$ is an IC-Brooks graph.
\end{cor}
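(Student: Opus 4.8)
The plan is to turn the characterisation of Theorem~\ref{thm-charactIC} into an algorithm in three stages: first extract from the expanded‑block structure of $G$ the finite list of subproblems, then solve each by bounded‑treewidth dynamic programming over a fixed finite alphabet, and finally bound the total size of the subproblems by $O(n)$. In the first stage I would compute, in linear time, the expanded blocks of $G$ together with the clique‑cut tree relating them, the analogue of the block--cut tree; since $G$ is an expanded Gallai‑tree, every clique‑cut of $G$ is the root‑clique $\pi_G(H)$ of some expanded block $H$. From this decomposition I can list all clique‑cuts $K$ with $\mathcal{B}_{G,K}\neq\emptyset$ and, for each, build $H_K=H[p]$ and read off the parameters $k$, the odd cycles $C_1,\dots,C_k$, the blow‑up function $p$, and $t_K=|\cap_{v\in V(K)}N_G(v)|$. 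By Theorem~\ref{thm-charactIC}, together with Theorem~\ref{thm-charact} and Lemma~\ref{lem-k} for the converse direction---here one uses that, since $G$ is $r$‑regular, every degree‑list assignment with lists contained in $[r]$ is the constant list $[r]$, so that condition~(1) of Theorem~\ref{thm-charact} says exactly that $G$ is an IC‑Brooks graph---the graph $G$ is an IC‑Brooks graph if and only if, for every such $K$, the multifold colouring $f$ of $H_K^2$ described in Theorem~\ref{thm-charactIC} exists.

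For a fixed $K$ I would decide existence of $f$ by dynamic programming over a tree decomposition. Because $G$ is $r$‑regular, every clique of $G$, in particular every blow‑up clique $K(x)$ inside $H_K=H[p]$, has at most $r+1$ vertices, so $p(x)\le r+1$ and $|f(x)|=p(x)-1\le r$; thus each palette $f(x)$ is one of at most $2^{r}$ subsets of $[r]$. The base graph $H$, a family of $k$ odd cycles through a common edge $uv$, is series--parallel and hence has treewidth at most $2$, so $H_K=H[p]$, and therefore also its square $H_K^2$, has treewidth bounded by a function of $r$ (at most $2r$, say, for $H_K$); an optimal tree decomposition of $H_K^2$ can be computed in linear time by Bodlaender's algorithm, the width being a fixed constant. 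Every bag then carries at most a bounded number (depending only on $r$) of joint palette‑assignments, and the dynamic program propagates along the decomposition, checking in each bag that $|f(x)|=p(x)-1$, that $f(x)\subseteq[r]-\{i,k+1\}$ when $x\in V(C_i)-N_H[\{u,v\}]$ and $f(x)\subseteq[r-t_K]-\{1,\dots,k+1\}$ when $x\in N_H[\{u,v\}]$, and that $f(x)\cap f(y)=\emptyset$ whenever $x,y$ are at distance at most $2$ in $H_K$. Since every such pair is an edge of $H_K^2$ and hence lies in a common bag, all constraints are local, so the existence of a globally consistent assignment---equivalently, of the desired $f$---is decided in time $O(|V(H_K)|)$, with the constant depending only on $r$.

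For the total running time, observe that distinct clique‑cuts $K$ give pairwise disjoint families $\mathcal{B}_{G,K}$, because $\pi_G(H)$ is a single clique; moreover each vertex of $G$ lies in at most $d_G(v)+1\le r+1$ expanded blocks. Hence $\sum_K|V(H_K)|\le\sum_{H\in\mathcal{B}_G}|V(H)|\le(r+1)\,n=O(n)$, and there are $O(n)$ relevant clique‑cuts. Running the dynamic program for each such $K$, and declaring $G$ an IC‑Brooks graph precisely when every one of these tests succeeds, therefore gives a linear‑time algorithm.

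The main obstacle is the second stage: one must verify that the problem ``find a multifold colouring of $H_K^2$ with the prescribed forbidden palettes'' is genuinely a bounded‑treewidth constraint satisfaction problem over a fixed finite alphabet---in particular that $H_K^2$ still has treewidth bounded in terms of $r$, so that the distance‑$2$ disjointness constraints are captured locally by the bags, and that the per‑vertex state space is the finite set of subsets of $[r]$. Once this is in place, the remaining ingredients---the linear‑time construction of the expanded‑block decomposition and of the tree decomposition, the reduction via Theorem~\ref{thm-charactIC}, and the size accounting---are routine.
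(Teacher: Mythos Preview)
Your proposal is correct and follows essentially the same approach as the paper: reduce via Theorem~\ref{thm-charactIC} (together with Theorem~\ref{thm-charact} and Lemma~\ref{lem-k} for the converse) to a multifold-colouring test for each relevant clique-cut, observe that the underlying graph has tree-width bounded in terms of $r$, and solve each test by dynamic programming over a tree decomposition with a state space of size $2^{O(r)}$. The paper's argument is a two-line sketch (it just asserts tree-width at most $2r$ and invokes a linear-time algorithm), whereas you supply the missing details on the decomposition, the finite alphabet, and the total-size bound $\sum_K |V(H_K)| = O(n)$. One small slip: it is not true that \emph{every} clique-cut of an expanded Gallai-tree is a root-clique (a cut-vertex separating two complete-graph expanded blocks is a counterexample), but you only use this for clique-cuts $K$ with $\mathcal{B}_{G,K}\neq\emptyset$, where it holds by definition, so the algorithm is unaffected.
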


   \section{Regular expanded Gallai-trees and IC-Brooks graphs}
   
In this section, we show that if $r \le 3$, then every $r$-regular expanded Gallai-tree is an IC-Brooks graph. For $r \ge 4$, there are $r$-regular expanded Gallai-trees that are not IC-Brooks graphs. On the other hand, any expanded Gallai-tree is an induced subgraph of an IC-Brooks graphs.

\begin{theorem}
    \label{thm-r-regularforsmallr}
   If $r \le 3$, and $G$ is an expanded Gallai-tree with maximum degree at most $r$, then there is a degree-list assignment $L$ of $G$ such that $(G,L)$ is infeasible and $L(v) \subseteq [r]$. In particular, every $r$-regular expanded Gallai-trees is an IC-Brooks graph.
\end{theorem}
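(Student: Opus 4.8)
The plan is to derive the statement from Theorem~\ref{thm-charact}. Since $G$ is an expanded Gallai-tree with $\Delta(G)\le r$, it suffices to verify condition~(2): for every clique-cut $K$ of $G$ with $\mathcal{B}_{G,K}\ne\emptyset$ there is a degree-list assignment $L_K$ of $H_K$ with $(H_K,L_K)$ infeasible, $L_K(v)\subseteq[r]$ for all $v$, and $L_K(v)\subseteq[r-t_K]$ for $v\in V(K)$, where $t_K=|\bigcap_{v\in V(K)}N_G(v)|$ (if there is no such $K$, condition~(2) is vacuous, and the case that $G$ has no clique-cut is likewise trivial). Fix such a $K$. By Definitions~\ref{def-rootclique} and~\ref{def-HK}, $H_K=H[p]$ where $H$ is the union of $k\ge 1$ odd cycles $C_1,\dots,C_k$ of length at least $5$ sharing a common edge $uv$, $K=K(u)\cup K(v)$, and $p\colon V(H)\to\{1,2,\dots\}$. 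For $a\in V(K)$ the neighbours of $a$ outside $H_K$ are exactly the $t_K$ common neighbours of $V(K)$, so $d_{H_K}(a)=d_G(a)-t_K\le r-t_K$. Hence by Lemma~\ref{lem-k} (with $s=r-t_K$) the required $L_K$ exists if and only if $H^2$ admits a multifold colouring $f$ with $|f(x)|=p(x)-1$, with $f(x)\subseteq[r]\setminus\{i,k+1\}$ for $x\in V(C_i)\setminus N_H[\{u,v\}]$, and with $f(x)\subseteq[r-t_K]\setminus\{1,\dots,k+1\}$ for $x\in N_H[\{u,v\}]$. So everything reduces to producing such an $f$ when $r\le 3$.

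The second step is to read off the restrictions forced by $r\le 3$. A vertex of $K(u)$ has $H_K$-degree $(p(u)-1)+p(v)+\sum_{i=1}^k p(w_i)\ge k+1$ (where $w_i$ is the neighbour of $u$ on $C_i$ other than $v$), which is at most $3$, so $k\le 2$; the analogous count at an arbitrary clique $K(x)$ gives $(p(x)-1)+p(x^-)+p(x^+)\le 3$, so $p(x)\le 2$ for all $x$; and if two vertices with $p$-value $2$ were at distance $1$ or $2$ in $H$, then a vertex of one of their cliques, respectively of the clique of the vertex between them, would have degree at least $4$ — a contradiction. Thus no two vertices of multiplicity $2$ lie within distance $2$ in $H$. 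Finally, when $k=2$, or when $k=1$ and $t_K\ge1$, the degree bounds at $K(u)$ and $K(v)$ force $p(x)=1$ on all of $N_H[\{u,v\}]$. Now I would define $f$ explicitly. If $k=1$, set $f(x)=\{3\}$ whenever $p(x)=2$ and $f(x)=\emptyset$ otherwise: when $t_K=0$ every prescribed set equals $[3]\setminus\{1,2\}=\{3\}$, and when $t_K=1$ the vertices of $N_H[\{u,v\}]$ are unblown-up so get $\emptyset$, while the others still need only a subset of $\{3\}$; since all nonempty $f$-values equal $\{3\}$ and any two vertices of multiplicity $2$ are non-adjacent in $H^2$, $f$ is a valid multifold colouring. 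If $k=2$, set $f(x)=\{2\}$ when $p(x)=2$ and $x\in V(C_1)\setminus N_H[\{u,v\}]$, $f(x)=\{1\}$ when $p(x)=2$ and $x\in V(C_2)\setminus N_H[\{u,v\}]$, and $f(x)=\emptyset$ otherwise; the containments hold ($[3]\setminus\{1,3\}=\{2\}$, $[3]\setminus\{2,3\}=\{1\}$, and $N_H[\{u,v\}]$ gets $\emptyset$, matching $[3]\setminus\{1,2,3\}=\emptyset$), and any two vertices with nonempty $f$-value that are adjacent in $H^2$ lie within distance $2$ in $H$ and both have multiplicity $2$, hence lie in different cycles, so their colours $\{2\}$ and $\{1\}$ are disjoint. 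In all cases $f$ exists, so condition~(2) holds and Theorem~\ref{thm-charact} yields a degree-list assignment $L$ of $G$ with $(G,L)$ infeasible and $L(v)\subseteq[r]$.

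For the final sentence, if $G$ is $r$-regular then $L$ is tight by Lemma~\ref{lem-con}, so $|L(v)|=d_G(v)=r$ and hence $L(v)=[r]$ for every $v$; thus $G$ is not indicated $r$-colourable, and combined with $\chi_i(G)\le\Delta(G)+1=r+1$ this gives $\chi_i(G)=r+1$, i.e.\ $G$ is an IC-Brooks graph. The step I expect to be the main obstacle is the degree bookkeeping of the second paragraph — pinning down for $r\le 3$ that $k\le 2$, that all blow-up multiplicities are at most $2$, that multiplicity-$2$ vertices are pairwise at distance at least $3$, and that the vertices near the root clique are not blown up apart from the single borderline case $k=1,\,t_K=0$ — together with correctly matching the parameters of Lemma~\ref{lem-k} (taking $s=r-t_K$ and checking $d_{H_K}(a)\le r-t_K$ on $V(K)$). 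For $r\le 2$ the same counting gives $k\le 1$ and $p\equiv 1$, so $f\equiv\emptyset$ works and all lists may be taken inside $\{1,2\}$.
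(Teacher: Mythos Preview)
Your proposal is correct and follows essentially the same route as the paper: reduce via Theorem~\ref{thm-charact} to verifying the multifold-colouring criterion of Lemma~\ref{lem-k} on each $H_K$, then check it by hand for $r\le 3$ using the degree constraints to bound $k\le 2$, $p(x)\le 2$, and to show that multiplicity-$2$ vertices are pairwise at $H$-distance $\ge 3$. Your write-up is in fact more detailed than the paper's (which is quite terse); the one passage that reads oddly is the end of the $k=2$ case, where ``hence lie in different cycles'' is superfluous---your earlier degree argument already rules out \emph{any} pair of multiplicity-$2$ vertices within distance $2$, so there are simply no adjacent nonempty $f$-values in $H^2$, and the properness of $f$ is vacuous rather than needing the colour-disjointness of $\{1\}$ and $\{2\}$.
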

\begin{proof}
    The case $r \le 2$ is trivial. Assume $r=3$. By Theorem \ref{thm-charact} and Lemma \ref{lem-k}, it suffices to show that the required multifold colouring $f$ of $H$ exists, where $H$ consists of at most two odd cycles sharing one edge. If $H$ is an odd cycle, then since $H[p]$ has maximum degree at most 3, we have $p(v) \le 2$ for each vertex $v$, and if $p(v)=2$, then $p(x)=1$ for each $x \in N_{H^2}(v)$. Thus the problem is to find a proper colouring of a graph consisting of isolated vertices with a single colour $3$, which trivially exists. 
    
    If $H$ consists of two odd cycles sharing one edge $uv$, then $p(x)=1$ for all  $x \in N_H[\{u,v\}]$. Similarly as above, the problem is to find a proper colouring of a graph consisting of isolated vertices with a single colour $3$.
\end{proof}

\begin{proposition}
    For $r \ge 4$, the conclusion of Theorem \ref{thm-r-regularforsmallr} does not hold,
and there is an $r$-regular expanded Gallai-tree that is not an IC-Brooks graph.
\end{proposition}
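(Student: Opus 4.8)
The plan is to use Theorem~\ref{thm-charact} together with Lemma~\ref{lem-k} to convert both assertions into statements about the non-existence of a multifold colouring of a small graph, and then to exhibit explicit blow-ups of odd cycles for which that colouring visibly does not exist.

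First I would dispose of the failure of the conclusion of Theorem~\ref{thm-r-regularforsmallr}, which needs no regularity. Take $G=C_5[p]$, a blow-up of the $5$-cycle $c_0c_1c_2c_3c_4$ in which the positive integers $p_0,\dots,p_4$ satisfy $p_{j-1}+p_j+p_{j+1}\le r+1$ for every $j$ (so $\Delta(G)\le r$) while $\sum_j p_j\ge r+4$; summing the five degree bounds gives $3\sum_j p_j-5\le 5r$, so the largest attainable $\sum_j p_j$ is $\lfloor (5r+5)/3\rfloor\ge r+4$ for every $r\ge 4$, and suitable $p$ exist (for $r=4$, e.g.\ $p=(1,2,2,1,2)$). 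Being a blow-up of an odd cycle, $G$ is an expanded Gallai-tree, hence not indicated degree-choosable by Theorem~\ref{thm-indicateddegreechoosable}. But by Lemma~\ref{lem-k} with $H=C_5$, $s=r$ and $K$ any edge-clique — here $k=1$, so all vertices have the common palette $[r]\setminus\{1,2\}$ of size $r-2$, and $H^2=C_5^2=K_5$ — a degree-list assignment $L$ with $L(v)\subseteq[r]$ and $(G,L)$ infeasible exists iff $K_5$ has a multifold colouring with $p_j-1$ colours at the $j$-th vertex drawn from $r-2$ colours, which, $K_5$ being a clique, forces $\sum_j(p_j-1)\le r-2$, i.e.\ $\sum_j p_j\le r+3$; this contradicts our choice, so no such $L$ exists and the conclusion of Theorem~\ref{thm-r-regularforsmallr} fails.

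For the $r$-regular non-IC-Brooks example I would blow up at a root-clique. Let $H$ consist of $k\ge 2$ $5$-cycles $C_1,\dots,C_k$ sharing one edge $uv$; write $u_i,v_i$ for the neighbours of $u,v$ on $C_i$ and $w_i$ for the remaining vertex of $C_i$. With $p(u)=p(v)=p(u_i)=p(v_i)=s$ and $p(w_i)=r+1-2s$, the graph $G=H[p]$ is $r$-regular exactly when $(k+2)s=r+1$ (check each type of blown-up vertex), and $K=K(u)\cup K(v)$ is an elementary clique-cut and the unique root-clique of every $C_i[p]$, so $G$ is a valid expanded Gallai-tree with $H_K=G$ and $t_K=0$. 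Since $G$ is $r$-regular, Lemma~\ref{lem-con} shows condition (1) of Theorem~\ref{thm-charact} is equivalent to $G$ being an IC-Brooks graph; and by Lemma~\ref{lem-k} (applicable since $H_K=G$, $t_K=0$) this holds iff $H^2$ has a multifold colouring $f$ with $|f(x)|=p(x)-1$, $f(w_i)\subseteq[r]\setminus\{i,k+1\}$, and $f(x)\subseteq[r]\setminus\{1,\dots,k+1\}$ for $x\in\{u,v,u_i,v_i\}$. Now $\{u,v,u_1,v_1,w_1\}$ induces $K_5$ in $H^2$, all five of those palettes lie in $[r]\setminus\{1,k+1\}$ (size $r-2$), and the five demands total $4(s-1)+(r-2s)=r+2s-2>r-2$ whenever $s\ge 2$. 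So $G$ is not IC-Brooks whenever $r+1$ has a divisor $k+2$ with $k\ge 2$ and $s=(r+1)/(k+2)\ge 2$, i.e.\ a divisor in $[4,(r+1)/2]$; this settles every $r\ge 4$ except those where $r+1$ is prime or equals $6$ or $9$.

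The residual values of $r$ — in particular all $r$ with $r+1$ prime, such as $r=4,6,10,12,\dots$ — are the main obstacle. A short case analysis (treating $5$-cycles apart from longer odd cycles, via the period-$3$ window-sum recurrence $m_{i-1}+m_i+m_{i+1}=r+1$) shows that for these $r$ no $r$-regular blow-up of a union of $\ge 2$ odd cycles glued along an elementary edge-clique can exist, so one cannot take $H_K=G$. Instead I would attach to the fat edge $K$ a short chain of complete blocks (a blow-up of a path whose consecutive clique-sizes obey the window-sum condition) and pin its far end with a second, symmetric cycle-gadget, choosing the chain length and the two gadget parameters — again using the periodicity of the recurrence to close the block structure — so that every auxiliary vertex has degree exactly $r$ while $t_K>0$. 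The $K_5$-count of the previous paragraph still fails, now by the margin $r+2s-2>r-2-t_K$ once $s\ge 2$, so the resulting $G$ is again not IC-Brooks. Combining the cases gives, for every $r\ge 4$, an $r$-regular expanded Gallai-tree that is not an IC-Brooks graph.
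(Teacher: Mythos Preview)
Your first paragraph is fine and essentially matches the paper's idea: a $C_5$-blow-up together with the $K_5$ pigeonhole in $H^2$ forces $\sum_j(p_j-1)\le r-2$, which is violated once $\sum_j p_j\ge r+4$. (One small slip: in paragraph two, $4(s-1)+(r-2s)=r+2s-4$, not $r+2s-2$; the inequality $>r-2$ still holds precisely for $s\ge 2$, so the conclusion survives.)

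The real problem is your treatment of the $r$-regular claim. Your glued $5$-cycle construction only works when $r+1$ has a divisor in $[4,(r+1)/2]$, and you acknowledge that this misses all $r$ with $r+1$ prime as well as $r\in\{5,8\}$ --- in particular the base case $r=4$. Your third paragraph then promises to ``attach a short chain of complete blocks \dots\ and pin its far end with a second, symmetric cycle-gadget,'' but gives no actual parameters, does not verify $r$-regularity, does not check that the result is still an expanded Gallai-tree (each cycle block gets at most one root-clique), and does not verify condition~(2) of Theorem~\ref{thm-charact} at the \emph{new} root-clique created by the second gadget. As written this is a sketch of an intention, not a proof, and it is precisely the hard cases that are left open.

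The paper's route is both simpler and complete: split by $r\bmod 3$. For $r=3k+2$ the uniform blow-up $C_5[k+1]$ is already $r$-regular and fails the $K_5$ count. For $r=3k+1$ (resp.\ $r=3k$, $k\ge 2$) one takes a $C_5[p]$ (resp.\ $C_7[p]$) in which every clique has degree $r$ except a single clique $K(v_1)$ of degree $r-1$; the same $K_5$ obstruction shows no infeasible $L$ with $L(v)\subseteq[r]$ exists for this piece, and then several copies are glued to a small complete hub so that every vertex reaches degree $r$. If the glued graph $G'$ were IC-Brooks, then restricting any infeasible all-$[r]$ list on $G'$ to one copy (via Lemma~\ref{lem-remaining}) would yield an infeasible $L'\subseteq[r]$ on the piece, a contradiction. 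Note that your own first-paragraph example for $r=4$, namely $C_5[(1,2,2,1,2)]$, already has exactly one deficient clique at degree $r-1$; had you completed \emph{that} to an $r$-regular graph by gluing through complete blocks (which add nothing to $\mathcal{B}_G$) rather than introducing a new cycle-gadget, you would have been done for every $r\ge4$ along the paper's lines.
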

\begin{proof}
We denote by $C_k$ the $k$-cycle with vertices $v_1,v_2, \ldots, v_k$.

Assume $r \ge 4$. If $r = 3k+1$, then  let $G=C_5[p]$, where $p(v_i)=k+1$ for $i=1,3,4$ and $p(v_i)=k$ for $i=2,5$. It is easy to check that the multifold colouring described in Lemma \ref{lem-k} for this graph  does not exist. Note that each vertex in $G$ has degree $r$, except that   vertices in $K(v_1)$  have degree $r-1$.  Take the union of a copy of $K_{2k+1}$  and $2k+1$ copies of  $G$, and add edges connect the $i$th vertex of $K_{2k+1}$ to the vertices in $K(v_1)$ in the $i$th copy of $G$. See Fig. \ref{fig-14}(a).  The resulting graph is an $r$-regular expanded Gallai-tree which is not an IC-Brooks graph.

If $r= 3k+2$, then 
let $G=C_5[k+1]$. Then $G$ is an $r$-regular expanded Gallai-tree. A degree-list assignment $L$ for $G$ for which $(G,L)$ be infeasible is equivalent to a $k$-fold colouring $\phi$ of $K_5$ using colours from $[r]-\{1,2\}$. It is easy to check that such a colouring does not exist. 

If $r = 3k$ and $k \ge 2$, then let $G=C_7[p]$, where 
  $p(v_i)=k$ for $i=1,2,4,5,7$ and $L(v_i)=k+1$ for $i=3,6$. Then each vertex of $G$ has degree $r$, except that the vertices
 in $K(v_1)$ have degree $r-1$. A degree-list assignment of $G$ for which $(G,L)$ is infeasible and $L(x) \subseteq [r]$ is equivalent to an $f$-fold colouring of $C_7^2$ with $|f(x)| = p(x)-1$ and $f(x) \subseteq [r]-\{1,2\}$.   It is again easy to check that there is no such colouring. Take $3$ copies of $G$, add a vertex which is adjacent to $3$   copies of $K(v_1)$. See Fig. \ref{fig-14}(b). The resulting graph is an $r$-regular expanded Gallai-tree which is not an IC-Brooks graph. 
  \end{proof}

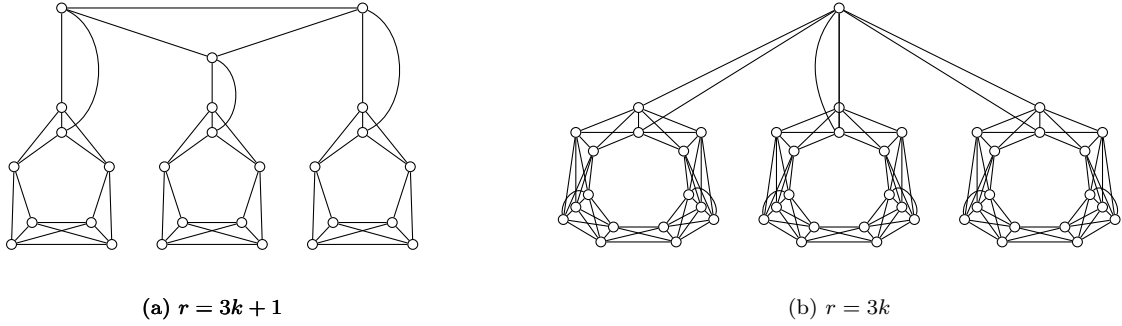
\begin{figure}
    \centering
\begin{tikzpicture}[scale=0.33, 
    dot/.style={circle, fill=white, draw=black, minimum size=1.3mm, inner sep=0pt}
  ]

\begin{scope}[xshift=-11cm]
  % 偏移量
  \def\dx{6}  % 每个 G_i 之间的水平间距

  % ========== 画 G_1, G_2, G_3 ==========
  \foreach \i in {1,2,3} {
    % 计算偏移量
    \pgfmathsetmacro{\shiftx}{(\i-2)*\dx} % 让 G_2 居中，G_1 在左，G_3 在右

    % 五边形的五个顶点
    \node[dot] (v1\i) at (\shiftx+0.00, 2.00) {};
    \node[dot] (v2\i) at (\shiftx+1.90, 0.62) {};
    \node[dot] (v3\i) at (\shiftx+1.18, -1.62) {};
    \node[dot] (v4\i) at (\shiftx-1.18, -1.62) {};
    \node[dot] (v5\i) at (\shiftx-1.90, 0.62) {};

    % 插入特殊点 v_1', v_3', v_4'
    \node[dot] (v1p\i) at (\shiftx+0.00, 3) {}; 
    \node[dot] (v3p\i) at (\shiftx+2, -2.50) {}; 
    \node[dot] (v4p\i) at (\shiftx-2.00, -2.50) {}; 

    % 连接默认点的边（五边形的边）
    \draw (v1\i) -- (v2\i) -- (v3\i) -- (v4\i) -- (v5\i) -- (v1\i);

    % 连接 v_1' 到 v_1, v_5, v_2
    \draw (v1p\i) -- (v1\i);
    \draw (v1p\i) -- (v5\i);
    \draw (v1p\i) -- (v2\i);

    % 连接 v_3' 到 v_3, v_2, v_4
    \draw (v3p\i) -- (v3\i);
    \draw (v3p\i) -- (v2\i);
    \draw (v3p\i) -- (v4\i);
    \draw (v3p\i) -- (v4p\i);

    % 连接 v_4' 到 v_4, v_3, v_5
    \draw (v4p\i) -- (v4\i);
    \draw (v4p\i) -- (v3\i);
    \draw (v4p\i) -- (v5\i);
  }

  % ========== 画 x_1, x_2, x_3 ==========
  % 额外点 x_1, x_2, x_3，放在 G_1, G_2, G_3 的上方
  \node[dot] (x1) at (-6, 7) {};
  \node[dot] (x2) at (0, 5) {};
  \node[dot] (x3) at (6, 7) {};

  % 连接 x_i 之间的边 (x_1, x_2, x_3 形成一个三角形)
  \draw (x1) -- (x2) -- (x3) -- (x1);

  % 连接 x_i 到它们对应的 G_i
  \foreach \i in {1,2,3} {
    \pgfmathsetmacro{\shiftx}{(\i-2)*\dx} 
    \draw (x\i) -- (v1p\i);
    \draw (x\i) to[out=-25,in=25] (v1\i);
    \node[below=1.4]{\scriptsize(a)  $r=3k+1$};
  }
  \end{scope}

  \begin{scope}[xshift=14cm] 

  \node[dot] (x) at (0, 7) {};

    % 设置平移量
  \def\dx{8}  % 每个副本的水平间距
  
  \foreach \i in {1,2,3} {
    % 计算偏移量
    \pgfmathsetmacro{\shiftx}{(\i-2)*\dx} % 让 G_2 居中，G_1 在左，G_3 在右
    
    % 在副本的不同位置绘制图形
    \begin{scope}[xshift=\shiftx cm]
    
       % 默认点（七边形的七个顶点）
  \node[dot] (v1) at (0.00, 2.00) {};
  \node[dot] (v2) at (1.80, 1.25) {};
  \node[dot] (v3) at (2.00, -0.50) {};
  \node[dot] (v4) at (1.00, -1.80) {};
  \node[dot] (v5) at (-1.00, -1.80) {};
  \node[dot] (v6) at (-2.00, -0.50) {};
  \node[dot] (v7) at (-1.80, 1.25) {};

  % 在指定点附近增加副本
  \node[dot] (v1p) at (0.00, 3) {};
  \node[dot] (v2p) at (2.5, 2) {};
  \node[dot] (v4p) at (1.5, -2.4) {};
  \node[dot] (v5p) at (-1.5, -2.40) {};
  \node[dot] (v7p) at (-2.5, 2) {};
  \node[dot] (v3p1) at (2.5, -1) {};
  \node[dot] (v3p2) at (3, -1.5) {};
  \node[dot] (v6p1) at (-2.50, -1) {};
  \node[dot] (v6p2) at (-3, -1.50) {};

  % 连接默认点的边（七边形的边）
  \draw  (v1) -- (v2) -- (v3) -- (v4) -- (v5) -- (v6) -- (v7) -- (v1);

  % 连接副本点到各自的点
  \draw  (v1) -- (v1p);
  \draw  (v2) -- (v2p);
  \draw  (v4) -- (v4p);
  \draw  (v5) -- (v5p);
  \draw  (v7) -- (v7p);
  \draw  (v3) -- (v3p1);
  \draw  (v3)to[out=50,in=70](v3p2);
  \draw  (v3p1) -- (v3p2);
  \draw  (v6) -- (v6p1);
  \draw  (v6) to[out=150,in=100] (v6p2);
  \draw  (v6p1)--(v6p2);

  % 连接 V_i 中每个点到 V_{i+1} 中每个点
  \draw  (v1) -- (v2p);
  \draw  (v1p) -- (v2);
  \draw  (v1p) -- (v2p);
  
  \draw  (v2) -- (v3p1);
  \draw  (v2) -- (v3p2);
  \draw  (v2p) -- (v3);
  \draw  (v2p) -- (v3p1);
  \draw  (v2p) -- (v3p2);
  
  \draw  (v3) -- (v4p);
  \draw  (v3p1) -- (v4);
  \draw  (v3p1) -- (v4p);
  \draw  (v3p2) -- (v4);
  \draw  (v3p2) -- (v4p);
  
  \draw  (v4) -- (v5p);
  \draw  (v4p) -- (v5);
  \draw  (v4p) -- (v5p);
  
  \draw  (v5) -- (v6p1);
   \draw  (v5) -- (v6p2);
   \draw  (v5p) -- (v6);
   \draw  (v5p) -- (v6p1);
  \draw  (v5p) -- (v6p2);
  
  \draw  (v6) -- (v7p);
  \draw  (v6p1) -- (v7);
  \draw  (v6p1) -- (v7p);
  \draw  (v6p2) -- (v7);
  \draw  (v6p2) -- (v7p);
  
  \draw  (v7) -- (v1p);
  \draw  (v7p) -- (v1);
  \draw  (v7p) -- (v1p);
  \draw  (x) -- (v1p);
  \draw  (x) --(v1);
  \end{scope}}

\draw  (x)to[out=230,in=120](-0.18, 2.10) ;

  \node[below=1.4]{\scriptsize (b)  $r=3k$};  \end{scope}
\end{tikzpicture}
\caption{The regular expanded Gallai-tree for $r=3k+1$ and $r=3k$} 
\label{fig-14}
\end{figure}

 \begin{theorem}
    \label{thm-subg}
    If $G$ is an expanded Gallai-tree, then there is an IC-Brooks graph $G'$ such that $G$ is an induced subgraph of $G'$. 
\end{theorem}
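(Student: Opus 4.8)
The plan is, given an expanded Gallai-tree $G$ with $\Delta(G)=d$, to build an $r$-regular expanded Gallai-tree $G'$ for a conveniently chosen $r$, with $G$ as an induced subgraph, and then to invoke Theorem~\ref{thm-charactIC}. By Theorem~\ref{thm-indicateddegreechoosable}, the definition of IC-Brooks graph, and Lemma~\ref{lem-con}, it suffices to make $G'$ an $r$-regular expanded Gallai-tree that contains $G$ as an induced subgraph and satisfies the multifold-colouring condition of Theorem~\ref{thm-charactIC}. Since $G$ is not indicated degree-choosable, some infeasible degree-list assignment of $G$ uses only finitely many colours; I would take $r$ larger than $d$ and than that number of colours, and large overall, with the three congruence classes of $r$ modulo $3$ handled by slightly different blow-up patterns exactly as in the proof of the preceding Proposition. (For $r\le 3$ there is nothing to arrange on the colouring side, since every such regular expanded Gallai-tree is an IC-Brooks graph by Theorem~\ref{thm-r-regularforsmallr}, so only the embedding matters there.)

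The first step is to \emph{enlarge} $G$ to an expanded Gallai-tree $\hat G$ with $G\subseteq_{\mathrm{ind}}\hat G$ and $\Delta(\hat G)\le r$: each expanded block that is a blow-up $C[p]$ of an odd cycle is replaced by $C[\hat p]$ with $\hat p\ge p$ pointwise — a monotone change, so $C[p]$ sits inside $C[\hat p]$ as an induced subgraph — while complete blocks are essentially kept. The $\hat p$ are chosen to solve the degree equations $\hat p(x^-)+\hat p(x)+\hat p(x^+)=r+1$ around $C$ as far as they will close up, so that every vertex of $C[\hat p]$ has degree exactly $r$ except the vertices of one designated edge-clique $K(u)\cup K(v)$, which absorb the unavoidable failure to close up around an odd cycle and end up deficient by a bounded amount $t\le c(d)$; that clique is taken to be the unique root-clique of the block (Definition~\ref{def-egt}), and wherever a block carries an old clique-cut of $G$ we arrange $\hat p$ so that this cut survives in enlarged form. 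Reassembling the enlarged blocks along the clique-cut pattern of $G$, the only vertices of $\hat G$ of degree below $r$ lie in these designated root-cliques and at the old low-degree cut-vertices, each with bounded deficiency.

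Next I would \emph{close off} each remaining deficient clique $K_0$ (size $q$, deficiency $t$) by the central-clique device of Figure~\ref{fig-14}: take $s:=r+1-q$ disjoint copies of $\hat G$, add $t$ disjoint cliques $W_1,\dots,W_t$, each on fresh vertices $w_1^{(j)},\dots,w_s^{(j)}$, and join $w_i^{(j)}$ to the whole copy of $K_0$ in the $i$-th copy of $\hat G$. Every $w_i^{(j)}$ then has degree $(s-1)+q=r$, and every vertex of every copy of $K_0$ gains exactly $t$ edges and reaches degree $r$; the cliques $W_j$ and $\{w_i^{(j)}\}\cup K_0^{(i)}$ become complete blocks attached via clique-cuts, so the result is again an expanded Gallai-tree, and it still contains a copy of $\hat G$, hence of $G$, as an induced subgraph. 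The old low-degree cut-vertices of complete blocks are handled in the same spirit, by vertex-summing onto them small "$r$-regular-except-one-vertex" gadgets produced by the same blow-up-and-close-off recipe; doing this everywhere yields the $r$-regular expanded Gallai-tree $G'$.

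It remains to verify, for each clique-cut $K$ of $G'$ with $\mathcal{B}_{G',K}\ne\emptyset$ — precisely the designated root-cliques, for which one checks $t_K=t$ — that the multifold colouring $f$ of $H_K^2$ required by Theorem~\ref{thm-charactIC} exists. Here $H_K=H[p]$ is a bounded blow-up of one odd cycle (or of a few odd cycles through a common edge) with $p(x)-1\le r$, $H_K^2$ has bounded maximum degree, and the palette $[r-t_K]$ minus the $O(1)$ reserved colours still has $\Theta(r)$ colours; because the blow-up parameters were chosen to leave slack, each clique of $H_K^2$ demands strictly fewer colours than the palette provides, so a direct first-fit colouring around the cycle delivers $f$. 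Theorem~\ref{thm-charactIC} then gives that $G'$ is an IC-Brooks graph, completing the proof. I expect the main obstacle to be the bookkeeping in the middle two steps — keeping $G$ induced, hitting \emph{exact} $r$-regularity (which is what forces the cycle degree equations and a short case analysis on $r$ modulo $3$), keeping at most one root-clique per cycle block so that $G'$ genuinely is an expanded Gallai-tree (equivalently, so that it contains none of the forbidden configurations of Corollary~\ref{cor-forbidden} once the hubs are attached), and keeping every deficiency $t_K$ small enough that the multifold colouring of the last step still fits.
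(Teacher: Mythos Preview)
Your ``close off'' step has a genuine gap. When $t\ge 2$ and $s\ge 2$, the subgraph $Z$ induced on $\bigcup_j V(W_j)\cup\bigcup_i K_0^{(i)}$ is $2$-connected and has no clique-cut, so it is a single expanded block of $G'$; but it contains an induced even cycle (e.g.\ with $q=1$, $s=t=2$ the whole of $Z$ is a $6$-cycle $a_1\,w_1^{(1)}\,w_2^{(1)}\,a_2\,w_2^{(2)}\,w_1^{(2)}$), so by Corollary~\ref{cor-forbidden} it is indicated degree-choosable and $G'$ is \emph{not} an expanded Gallai-tree. Your assertion that ``the cliques $W_j$ and $\{w_i^{(j)}\}\cup K_0^{(i)}$ become complete blocks attached via clique-cuts'' is false: no clique separates these pieces once $t\ge 2$. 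The device of Figure~\ref{fig-14} works precisely because it uses a \emph{single} central clique, so that each hub vertex is a cut-vertex; your $t$-fold version destroys this. Your ``enlarge'' step is also underspecified: the degree equations $\hat p(x^-)+\hat p(x)+\hat p(x^+)=r+1$ around an odd cycle force $\hat p$ to be $3$-periodic, which for cycles of length coprime to $3$ means $\hat p$ is constant $= (r+1)/3$, and then $\hat p\ge p$ and the clique-cut compatibility across adjacent blocks are not automatic.

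The paper's argument is considerably simpler and sidesteps all of this. It does not touch the expanded blocks of $G$. It first builds, for any odd $r$ large enough that some infeasible degree-list of $G$ uses only colours in $[r]$, a fixed gadget $Q$: an expanded Gallai-tree that is $r$-regular except for a single vertex $t$ of degree $1$, and that admits an infeasible degree-list in $[r]$ with $L(t)$ equal to \emph{any} prescribed singleton. (Concretely, $Q$ is a vertex-sum of $(r-1)/2$ copies of a blown-up theta graph $\Theta_{1,2,4,\ldots,4}$ and one $K_2$.) Then for each vertex $x$ of $G$ one attaches $r-d_G(x)$ copies of $Q$ by identifying $x$ with the copies of $t$; this is a vertex-sum, so $G'$ is trivially again an expanded Gallai-tree and is $r$-regular with $G$ induced. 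Finally one exhibits an infeasible $[r]$-list on $G'$ directly by pasting the infeasible list on $G$ with the gadget lists, so Theorem~\ref{thm-charactIC} is never invoked.
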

\begin{proof}
    Assume $G$ is not indicated degree-choosable, and $L$ is a degree-list assignment of $G$ such that $(G,L)$ is infeasible. Let $r=2k+1$ be an odd integer such that $\cup_{v \in V(G)}L(v) \subseteq [r]$. Let $H'$ be the expanded theta graph consisting of paths $P_1,P_2, P_3, \ldots, P_r$
    connecting two vertices $u$ and $v$, where $P_1$ is a single edge, $P_2=[u, w, v]$ has length 2, and $P_i=[u, w_{i,1}, w_{i,2}, w_{i,3}, v]$ has length $4$ for $i=3,4,\ldots, r$. Let $H$
    be obtained from $H'$ by duplicating $w_{i,2}$ into an $(r-1)$-clique, for $i=3,4,\ldots, r$. Then each vertex of $H$ has degree $r$, except that $w$ has degree $2$. Let $A=\{a,b\}$ be any arbitrary 2-subset of $\{1,2,\ldots, r\}$, and let $L(x)=[r]$ for $x \in V(H)-\{w\}$ and $L(w)=A$. It is easy to verify that $(H,L)$ is infeasible. 

    Take $k$ copies of $H$, and one copy of $K_2$ with $V(K_2)=\{s,t\}$. Identify $s$ and all the $k$ copies of $w$ in the $k$ copies of $H$ into a single vertex. Denote the resulting graph by $Q$. Then each vertex of $Q$ has degree $r$, except that vertex $t$ has degree $1$. Let $B$ be any 1-subset of $[r]$, and let $L'(x)=[r]$ for all vertices $x \in V(Q)-\{t\}$ and $L'(t)=B$. Again it is easy to verify that $(Q,L')$ is infeasible. 

    Now for each vertex $x$ of $G$, take $r-d_G(x)$ copies of $Q$, identify $x$ with the $r-d_G(x)$ copies of $t$ in the copies of $Q$. The resulting graph $G'$ is $r$-regular, and it follows from the discussion above that $G'$ is an IC-Brooks graph. %\xztodo{Better have a figure for this as well.}
\end{proof}

It was proved in \cite{KKKO} that every IC-Brooks graph $G$ has   $\omega(G) \ge \lceil \frac{\Delta(G)}{2} \rceil+1$. The following result shows that $G$ has   $\omega(G) \ge \lceil \frac{2(\Delta(G)+1)}{3} \rceil$, and this upper bound on $\omega(G)$ is sharp.

\begin{theorem}
    \label{thm-omega}
    If $G$ is an $r$-regular expanded Gallai-tree, then $\omega(G) \ge 2(r+1)/3$. In particular, if $G$ is an IC-Brooks graph, then 
    $\omega(G) \ge \frac 23 (\Delta(G)+1)$.
\end{theorem}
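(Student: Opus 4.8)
The plan is to use the block structure of $G$ provided by Theorem~\ref{thm-indicateddegreechoosable}: every expanded block of $G$ is a complete graph or a blow-up $C[p]$ of an odd cycle $C$ of length $\ell\ge 5$, and the blocks are glued along clique-cuts. First I would isolate a \emph{terminal} expanded block $H$: among $G$ itself (if $G$ has no clique-cut) and all ``branches'' $G[V(K)\cup U]$ for clique-cuts $K$ and components $U$ of $G-V(K)$, take one, $H$, with the fewest vertices. A short argument (a clique-cut of $H$ would still separate $G$ and yield a strictly smaller branch, while no vertex outside $H$ can be added without recreating a clique-cut) shows $H$ is indeed an expanded block; hence $H$ is $K_m$ or a blow-up $C[p]$, and its boundary $B_H:=\{x\in V(H):N_G(x)\not\subseteq V(H)\}$ is contained in a clique of $H$ and is a proper subset of $V(H)$.

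If $H=K_m$, any $x\in V(H)\setminus B_H$ has all $r$ of its neighbours inside $H$, so $m-1=r$ and $\omega(G)\ge m=r+1\ge\tfrac23(r+1)$. Suppose instead $H=C[p]$ with $C=v_1v_2\cdots v_\ell$, $\ell\ge 5$ odd. Since $C$ has no triangle, a clique of $C[p]$ meets at most two consecutive blown-up cliques, so $B_H\subseteq K(v_a)\cup K(v_{a+1})$ for some edge $v_av_{a+1}$ of $C$. Consequently, for each index $j$ in the set $J=\{1,\dots,\ell\}\setminus\{a,a+1\}$, which is a cyclic block of $\ell-2\ge 3$ consecutive indices, every vertex of $K(v_j)$ has all its neighbours inside $H$, and $r$-regularity gives
\[
p(v_{j-1})+p(v_j)+p(v_{j+1})=r+1\qquad(j\in J).
\]
Now assume, for contradiction, that $\omega(G)<\tfrac23(r+1)$. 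Each $K(v_i)\cup K(v_{i+1})$ is a clique of $G$, so $p(v_i)+p(v_{i+1})<\tfrac23(r+1)$ for every edge of $C$. For $j\in J$, adding the bounds for $v_{j-1}v_j$ and $v_jv_{j+1}$ and subtracting the displayed identity yields $p(v_j)<\tfrac13(r+1)$. As $J$ contains three consecutive indices $j-1,j,j+1$, the identity for this middle index $j$ forces $r+1<3\cdot\tfrac13(r+1)=r+1$, a contradiction. Hence $\omega(G)\ge\tfrac23(r+1)$.

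For the second statement, every IC-Brooks graph $G$ is a $\Delta(G)$-regular expanded Gallai-tree (as noted after Theorem~\ref{thm-indicateddegreechoosable}), so the first part applied with $r=\Delta(G)$ gives $\omega(G)\ge\tfrac23(\Delta(G)+1)$.

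The step I expect to need the most care is the cycle-block case: not the final arithmetic, but verifying that a terminal blow-up-of-an-odd-cycle block really does possess at least three consecutive ``fully interior'' blown-up vertices, i.e.\ correctly locating where $B_H$ can touch $C[p]$. This is essential for the sharp constant: a single degree identity only gives $\omega(G)\ge\tfrac12(r+1)$, and one needs three consecutive such indices (hence three identities of the displayed type) to reach $\tfrac23(r+1)$.
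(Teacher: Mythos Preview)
Your proposal is correct and follows essentially the same approach as the paper's proof: isolate a leaf (terminal) expanded block, dispose of the complete-graph case immediately, and in the blown-up-odd-cycle case use $r$-regularity at the interior blown-up vertices to get the identities $p(v_{j-1})+p(v_j)+p(v_{j+1})=r+1$.

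The only differences are cosmetic. The paper simply invokes an ``expanded leaf block'' without the explicit minimal-branch construction you give, and for the final inequality it argues directly rather than by contradiction: from the identities one gets $p(v_5)=p(v_2)$, so the three clique sizes $p(v_2)+p(v_3)$, $p(v_3)+p(v_4)$, $p(v_4)+p(v_2)$ add to $2(r+1)$, and their maximum is at least $\tfrac{2}{3}(r+1)$. Your contradiction argument reaches the same conclusion with the same ingredients. Your self-identified worry is already handled by your own reasoning: since $B_H$ lies in a clique of $C[p]$ and $C$ is triangle-free, $B_H\subseteq K(v_a)\cup K(v_{a+1})$, so $|J|=\ell-2\ge 3$ consecutive interior indices are guaranteed.
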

\begin{proof}
Assume $G$ is an $r$-regular expanded Gallai-tree. 
Let $H$ be an expanded leaf block of $G$. If $H$ is a complete graph, then we are done. Otherwise, $H=C[p]$ is a blow-up of an odd cycle $C$ of length at least 5. 
Assume that the vertices of $C$ are $v_1,v_2,\ldots, v_{2k+1}$ and $v_3,v_4, \ldots, v_{2k+1}$ are not contained in any other expanded blocks. For $G$ to be $r$-regular, we must have $p(v_i)=p(v_{i+3})$ for $i=2,3,\ldots, 2k-1$, and $r= p(v_2)+p(v_3)+p(v_4)-1$ and $\omega(G) \ge \max\{ p(v_2)+p(v_3), p(v_3)+p(v_4), p(v_4)+p(v_5)\} \ge \frac 23 (r+1)$. 
\end{proof}

By (6) of Example 1, the bound $\omega(G) \ge \frac 23 (\Delta(G)+1)$ for Brooks graph for indicated chromatic number is tight.

% \bibliographystyle{abbrv}
%\bibliography{ref}

\begin{thebibliography}{0}

\bibitem{Beck}
 J. Beck.
 \textit{ Achievement games and the probabilistic method, in: Combinatorics.}  Paul
Erd\H{o}s is Eighty, Bolyai Soc. Math. Stud., vol. 1, (1993), 51-78.

\bibitem{Bod}
H. L. Bodlaender. \textit{ On the complexity of some coloring games.} Internat. J. Found. Comput. Sci. 2 (1991) 133–147

\bibitem{Borodin}
O. V. Borodin. 
\textit{Criterion of chromaticity of a degree prescription.}
In IV All-Union Conf.
on Theoretical Cybernetics (Novosibirsk), pages 127–128, 1977.

\bibitem{BJS}
B. Bre\v{s}ar, M. Jakovac, and D. \v{S}tesl, \textit{ Indicated coloring game on Cartesian products
of graphs}, Discrete Appli. Math. 289 (2021) 320–326.


%%%%%%%%
\bibitem{Conlon2009}
D. Conlon. \textit{  On-line Ramsey numbers.} SIAM J. Discrete Math. 23(2009), 1954–1963.

\bibitem{CFS} D. Conlon, J. Fox and B. Sudakov. \textit{  Recent developments in graph Ramsey theory}. Surveys in combinatorics 2015, 49–118.
London Math. Soc. Lecture Note Ser., 424.


\bibitem{ERT}
P. Erdos, A. L. Rubin, and H. Taylor. \textit{Choosability in graphs.} Congr. Numer,
26(4):125–157, 1979.

\bibitem{FKRRT}
E. Friedgut, Y. Kohayakawa, V. Rodl,  A.  Ruci\"{n}ski  and P. Tetali. \textit{  Ramsey games 
against a one-armed bandit}.  Combin. Probab. Comput. 12 (2003), no. 5-6, 515–545.

\bibitem{Gardener}
M. Gardner, \textit{Mathematical games}. Scientific American (April, 1981) 23.

\bibitem{GHK} J. A.   Grytczuk, M. Haluszczak  and H. A. Kierstead. \textit{ On-line Ramsey theory}. Electron. J. Combin. 11 (2004), no. 1, Research Paper 60, 10 pp.


\bibitem{KKKO}
H. Kawabe, Y. Kobayashi, S. Kojima, K. Ozeki,  \textit{Brooks’ type theorem for indicated coloring game}, 2024. Manuscript.

 \bibitem{KK} H. A. Kierstead  and G. Konjevod. \textit{  Coloring number and on-line Ramsey theory for
graphs and hypergraphs}. Combinatorica 29 49–64.

\bibitem{KT} H. A. Kierstead and W. T. Trotter. \textit{ Planar graph coloring with an uncooperative partner}. J. Graph Theory 18 (1994) 569–584.
 
\bibitem{KR} A. Kurek and A. Ruciński. \textit{ Two variants of the size Ramsey number}. Discussiones Mathematicae Graph Theory 25 (2005), 141–149.

\bibitem{MSS} M. Marciniszyn, R. Spohel and A.  Steger. \textit{  Online Ramsey games in random graphs}.  
Combin. Probab. Comput. 18 271–300.

\bibitem{MSS2009} M. Marciniszyn, R. Spohel and A.  Steger. \textit{ Upper bounds for online Ramsey games in ¨
random graphs}. Combin. Probab. Comput. 18 259–270.




\bibitem{FRG}
P. Francis, S.F. Raj, and M. Gokulnath, \textit{ On indicated coloring of lexicographic
product of graphs}, Discrete Appli. Math. 319 (2022) 576–582.

 \bibitem{FRG} 
 P. Francis, S.F. Raj, and M. Gokulnath, \textit{On indicated coloring of some classes of
graphs}, Graphs Combin. 35 (2019) 1105–1127.

 \bibitem{Grzesik} A. Grzesik, \textit{ Indicated coloring of graphs}, Discrete Math. 312 (2012) 3467–3472.

 \bibitem{KKLZ} S.-J. Kim, Y. S. Kwon, D. D.-F. Liu, and X. Zhu. \textit{ On-line list colouring of complete multipartite graphs.} Electron. J. Combin., 19(1):P41, 2012. 

\bibitem{KPZ} J. Kozik, P. Micek, and X. Zhu. \textit{Towards an on-line version of Ohbas conjecture.}  Eur. J. Combin., 36:110–121, 2014.
 \bibitem{RRP} R.P. Raj, S.F. Raj, H.P. Patil, \textit{On indicated coloring of graphs}, Graphs Combin. 31
(2015) 2357–2367.

\bibitem{RRP2} 
R.P. Raj, S.F. Raj, H.P. Patil. \textit{On indicated chromatic number of graphs}, Graphs
Combin. 33 (2017) 203–219.

\bibitem{Schauz} 
U. Schauz. \textit{ Mr. Paint and Mrs. Correct}. 
Electron. J. Combin. 16 (2009), no. 1, Research Paper 77, 18 pp.

\bibitem{Vizing76} V. G. Vizing.
\newblock 	Coloring the vertices of a graph in prescribed colors.
\newblock   Diskret. Analiz, 1976, no.29, Metody Diskret. Anal. v Teorii Kodov i Shem, 3--10, 101.

\bibitem{whitney} H. Whitney.  \textit{Non-separable and planar graphs},  Transactions of the American Mathematical Society, 34 (2): 339–362.

\bibitem{Zhu} 
X. Zhu.  \textit{ Refined activation strategy for the marking game}. J. Combin. Theory Ser. B, 98 (2008), no. 1, 1–18. 

\bibitem{Zhu2009}
X. Zhu. \textit{ On-line list colouring of graphs.} Electron. J. Combin., 16(1):R127, 2009.
\end{thebibliography}

%\end{document}

\end{document}